\RequirePackage{algpseudocode}
% ^ Must be loaded before cleveref and hence before loading the document class

\documentclass[final]{siamart0516}

\title{%
    Incomplete Selected Inversion for Linear-Scaling Electronic Structure Calculations%
    \thanks{%
        This publication is based on Chapter 3 of the author's PhD thesis \cite{Ett19}. %
        The author gratefully acknowledges financial support by the Chancellor's International Scholarship at the University of Warwick. %
    }
}
\author{%
    Simon Etter%
    \thanks{Department of Mathematics, National University of Singapore (\email{ettersi@nus.edu.sg}).}
}

\headers{Incomplete Selected Inversion}{S.\ Etter}

\usepackage{autonum}
\usepackage{subfig}

\newsiamthm{conjecture}{Conjecture}
\newsiamthm{assumption}{Assumption}
\newsiamremark{remark}{Remark}
\newsiamremark{example}{Example}

\usepackage{amssymb}

\DeclareMathOperator{\real}{Re}

\DeclareMathOperator{\trace}{Tr}

\DeclareMathOperator*{\argmin}{arg\,min}

\Crefname{ALC@unique}{Line}{Lines}

\usepackage{xcolor}
\definecolor{cred}{RGB}{230, 0, 0}
\definecolor{cgreen}{RGB}{0, 127, 0}
\definecolor{cblue}{RGB}{0, 0, 200}
\definecolor{corange}{RGB}{255, 100, 0}
\definecolor{cyellow}{RGB}{255, 215, 0}
\definecolor{cpurple}{RGB}{150, 0, 150}
\definecolor{ccyan}{RGB}{0, 180, 235}

\usepackage{tikz}
\usetikzlibrary{calc}
\usetikzlibrary{backgrounds}

\tikzstyle{cvertex}=[solid, circle,draw=black,line width=1 pt, inner sep=2pt, fill=white, font=\footnotesize]
\tikzstyle{cedge}=[solid, draw=black,line width=1 pt]

% Copied from http://tex.stackexchange.com/questions/27171/padded-boundary-of-convex-hull
% Defines a path around the nodes passed as arguments.
\newcommand{\convexpath}[2]{
    [
        create hullcoords/.code={
            \global\edef\namelist{#1}
            \foreach [count=\counter] \nodename in \namelist {
                \global\edef\numberofnodes{\counter}
                \coordinate (hullcoord\counter) at (\nodename);
            }
            \coordinate (hullcoord0) at (hullcoord\numberofnodes);
            \pgfmathtruncatemacro\lastnumber{\numberofnodes+1}
            \coordinate (hullcoord\lastnumber) at (hullcoord1);
        },
        create hullcoords
    ]
    ($(hullcoord1)!#2!-90:(hullcoord0)$)
    \foreach [
        evaluate=\currentnode as \previousnode using int(\currentnode-1),
        evaluate=\currentnode as \nextnode using int(\currentnode+1)
    ] \currentnode in {1,...,\numberofnodes} {
        let \p1 = ($(hullcoord\currentnode) - (hullcoord\previousnode)$),
        \n1 = {atan2(\y1,\x1) + 90},
        \p2 = ($(hullcoord\nextnode) - (hullcoord\currentnode)$),
        \n2 = {atan2(\y2,\x2) + 90},
        \n{delta} = {Mod(\n2-\n1,360) - 360}
        in
        {arc [start angle=\n1, delta angle=\n{delta}, radius=#2]}
        -- ($(hullcoord\nextnode)!#2!-90:(hullcoord\currentnode)$)
    }
}

\begin{document}

\maketitle

\begin{abstract}
    Pole Expansion and Selected Inversion (PEXSI) is an efficient scheme for evaluating selected entries of functions of large sparse matrices as required e.g.\ in electronic structure algorithms.
    We show that the triangular factorizations computed by the PEXSI scheme exhibit a localization property similar to that of matrix functions, and we present a modified PEXSI algorithm which exploits this observation to achieve linear scaling.
    To the best of our knowledge, the resulting incomplete PEXSI (iPEXSI) algorithm is the first linear-scaling algorithm which scales provably better than cubically even in the absence of localization, and we hope that this will help to further lower the critical system size where linear-scaling algorithms begin to outperform the diagonalization algorithm.
\end{abstract}

\begin{keywords}
    selected inversion, localization, electronic structure, incomplete factorization
\end{keywords}

\begin{AMS}
    65F05, 65F50, 65Z05
\end{AMS}

\section{Introduction}

A key challenge in the numerical simulation of electronic structure models like density functional theory or tight binding is the fast evaluation of functions of sparse matrices.
Specifically, given a symmetric and sparse Hamiltonian matrix $H \in \mathbb{R}^{n \times n}$ which depends on the atomic coordinates $y_I \in \mathbb{R}^{d}$ with $I \in \{1, \ldots, N_\mathrm{atoms}\}$, the task is to compute quantities such as
the total energy $E_\mathrm{total}$,
the force $F_I = -\frac{\partial E_\mathrm{total}}{\partial y_I}$ on atom $I \in \{1, \ldots, N\}$,
the electronic density $\rho_i$ at site $i \in \{1, \ldots, n\}$ or
the total number of electrons $N_\mathrm{electrons}$,
which are given, respectively, by
\begin{equation}\label{eqn:qoi_fun}
    \begin{aligned}
        E_\mathrm{total} &= \trace\bigl(H \, f_{\beta,E_F}(H)\bigr)
        ,\\
        F_I &= -\trace\Bigl(\bigl(f_{\beta,E_F}(H) + H \, f'_{\beta,E_F}(H)\bigr) \, \tfrac{\partial H}{\partial y_I}\Bigr)
        ,\\
        \rho_i &= f_{\beta,E_F}(H)_{ii}
        ,\\
        N_\mathrm{electrons} &= \trace\bigl(f_{\beta,E_F}(H)\bigr)
    \end{aligned}
\end{equation}
where
\[
f_{\beta,E_F}(E) := \frac{1}{1 + \exp\bigl(\beta \, (E - E_F)\bigr)}
\]
denotes the Fermi-Dirac function with inverse temperature $\beta > 0$ and Fermi energy $E_F \in \mathbb{R}$; see e.g.\ \cite{Goe99,Kax03,SCS10,BM12}.

\subsection{Electronic structure algorithms}\label{ssec:algorithms}

A simple algorithm for evaluating the matrix functions in \cref{eqn:qoi_fun} is to compute the $n$ eigenpairs $E_k \in \mathbb{R}$, $\psi_k \in \mathbb{R}^n$ of $H$ and to evaluate the quantities of interest using the formulae
\begin{equation}\label{eqn:qoi_eigen}
    \begin{aligned}
        E_\mathrm{total} &= \sum_{k = 1}^n E_k \, f_{\beta,E_F}(E_k)
        ,\\
        F_I &= - \sum_{k = 1}^n \bigl(f_{\beta,E_F}(E_k) + E_k \, f'_{\beta,E_F}(E_k)\bigr) \, \psi_k^T \tfrac{\partial H}{\partial y_I} \psi_k
        ,\\
        \rho_i &= f_{\beta,E_F}(E_k)_{ii} \, (\psi_k)_i^2
        ,\\
        N_\mathrm{electrons} &= \sum_{k = 1}^n f_{\beta,E_F}(E_k)
        .
    \end{aligned}
\end{equation}
This approach is known as the {diagonalization algorithm}.
Its main drawback is that computing all eigenpairs requires $\mathcal{O}(n^3)$ floating-point operations which is unaffordably expensive for many matrix sizes $n$ of scientific interest.

Cubic scaling can be avoided by exploiting that the density matrix $f_{\beta,E_F}(H)$ is exponentially localized for insulators and metals at finite temperature, i.e.\ the magnitude of the entries of $f_{\beta,E_F}(H)$ decay exponentially as we move away from the nonzero entries of $H$ if either the spectrum of $H$ contains a gap of width independent of $n$ around the Fermi energy $E_F$ (insulator case), or if we have $\beta < \infty$ (finite temperature case) \cite{Koh96,BBR13}.
This localization implies that even though $f_{\beta,E_F}(H)$ has $\mathcal{O}(n^2)$ nonzero entries, only $\mathcal{O}(n)$ of these entries are numerically significant and the quantities of interest can be computed in $\mathcal{O}(n)$ effort by replacing each occurrence of the exact density matrix $f_{\beta,E_F}(H)$ in \cref{eqn:qoi_fun} with a band-limited approximation $\tilde \Gamma \approx f_{\beta,E_F}(H)$ which can be determined in various ways, see the review articles \cite{Goe99,BM12}.
The resulting {linear-scaling algorithms} significantly extend the range of system sizes $n$ amenable to numerical simulation, but they provide no speedup over the diagonalization algorithm on small- to medium-sized systems due to a large prefactor in the $\mathcal{O}(n)$ cost estimate, and their performance deteriorates in the limit of metals at low temperatures due to vanishing localization.

\begin{table}
    \centering
    \def\TT{\rule{0pt}{2.6ex}}       % Top strut
    \def\BB{\rule[-1.2ex]{0pt}{0pt}} % Bottom strut
    \begin{tabular}{| l | c | c | c |}
        \hline
        & Runtime & Memory & Example system \TT\BB\\
        \hline\hline
        $d = 1$ & $\mathcal{O}\big(n\big)$ & $\mathcal{O}\big(n\big)$ & Nanotubes \TT\BB\\\hline
        $d = 2$ & $\mathcal{O}\big(n^{3/2}\big)$ & $\mathcal{O}\big(n \, \log(n)\big)$ & Monolayers \TT\BB\\\hline
        $d = 3$ & $\mathcal{O}\big(n^{2}\big)$ & $\mathcal{O}\big(n^{4/3}\big)$ & Bulk solids \TT\BB\\
        \hline
    \end{tabular}
    \caption{
    Runtime and memory requirements of the selected inversion algorithm as a function of the effective dimension $d$ of the atomic system.
    References and some discussion regarding this result can be found in \cref{ssec:sparse_factorization}.
    }
    \label{tbl:selinv_costs}
\end{table}

% \emph{PEXSI algorithm.}
An algorithm which scales strictly better than $\mathcal{O}(n^3)$ regardless of localization has recently been proposed in \cite{LCYH13}.
In abstract terms, this algorithm evaluates a matrix function $f(H)$ via a rational approximation in pole-expanded form,
\begin{equation}\label{eqn:rational_approximation}
    f(H)
    \approx
    r(H) :=
    \sum_{k = 1}^q w_k \, (H - z_kI)^{-1}
\end{equation}
where the weights $w_k \in \mathbb{C}$ and poles $z_k \in \mathbb{C}$ are chosen such that $r(E) \approx f(E)$ on the spectrum of $H$.
Applied to the matrix functions in \cref{eqn:qoi_fun}, this yields the formulae
\begin{equation}\label{eqn:qoi_rational}
    \begin{aligned}
        E_\mathrm{total} &\approx \sum_{k = 1}^q w_k^{(E)} \, \trace\bigl((H - z_k^{(E)} I)^{-1}\bigr)
        ,\\
        F_I &\approx -\sum_{k = 1}^q w_k^{(F)} \, \trace\Bigl( (H - z_k^{(F)} I)^{-1} \, \tfrac{\partial H}{\partial y_I} \Bigr)
        ,\\
        \rho_i &\approx \sum_{k = 1}^q w_k^{(f)} \, \bigl((H - z_k^{(f)} I)^{-1}\bigr)_{ii}
        ,\\
        N_\mathrm{electrons} &\approx \sum_{k = 1}^q w_k^{(f)} \, \trace\bigl((H - z_k^{(f)} I)^{-1}\bigr)
    \end{aligned}
\end{equation}
which replace the problem of evaluating the complicated matrix functions in \cref{eqn:qoi_fun} with that of evaluating shifted inverses $(H - z_kI)^{-1}$.
Moreover, only the diagonal of $(H-z_kI)^{-1}$ is required for evaluating $E_\mathrm{total}$, $\rho_i$ and $N_\mathrm{electrons}$, and only the nonzero entries corresponding to nonzeros in $\frac{\partial H}{\partial y_I}$ are required for $F_I$.
Both of these sets of entries can be computed efficiently using the selected inversion algorithm from \cite{ET75} with runtime and memory requirements as summarized in \cref{tbl:selinv_costs}.
We conclude from this table that as promised at the beginning of this paragraph, the {Pole Expansion and Selected Inversion} (PEXSI) algorithm implied by \cref{eqn:qoi_rational} scales strictly better than cubically in all dimensions regardless of localization, but it scales worse than linearly in two and three dimensions which puts this scheme at a disadvantage compared to linear-scaling methods.

\subsection{Contributions}\label{ssec:contributions}

After a brief review of the selected inversion algorithm in \cref{sec:selinv}, we will show in \cref{sec:localization} that the triangular factorizations computed as part of this algorithm exhibit a localization property similar to that of the density matrix $f_{\beta,E_F}(H)$.
This suggests that the PEXSI scheme can be turned into a linear-scaling method by restricting the selected inversion algorithm to the $\mathcal{O}(n)$ entries of non-negligible magnitude, and in \cref{sec:iselinv} we will propose and discuss in detail such an incomplete selected inversion algorithm.
\Cref{sec:experiments} will present numerical experiments which demonstrate the convergence and linear scaling of the proposed algorithm, and finally \cref{sec:conclusion} will compare our iPEXSI scheme against other linear-scaling algorithms and discuss its parallel implementation.

\subsection{Related work}

The triangular factorization part of the incomplete selected inversion algorithm presented in \cref{sec:iselinv} is exactly the symmetric version of the incomplete LU factorization commonly used as a preconditioner in iterative methods for linear systems, see e.g. \cite[\S10.3]{Saa03}.
Our analysis sheds a new light on this well-known algorithm which may also find applications outside of the context of electronic structure algorithms.

Three algorithms for computing the rational approximations required by \cref{eqn:qoi_rational} have been proposed in the literature: best rational approximations to the Fermi-Dirac function have been determined in \cite{Mou16}, approximation based on discretized contour integrals has been presented in \cite{LLYE09}, and a rational interpolation scheme has been proposed in \cite{Ett19}.
We expect that the approximations from \cite{Mou16} are the best choice for most applications since they deliver the best possible accuracy for a given number of poles $q$.
A distributed-memory parallel implementation of the selected inversion algorithm has been developed in \cite{JLY16}.

\section{Review of exact selected inversion}\label{sec:selinv}

Selected inversion applied to a matrix $A$ consists in first computing a triangular factorization of $A$ and then inferring the values $A^{-1}(i,j)$ from this factorization.
This section will introduce the appropriate triangular factorization in \cref{ssec:factorization}, recall some key definitions and results from the theory of sparse factorizations in \cref{ssec:sparse_factorization}, and finally describe how to compute selected entries of the inverse in \cref{ssec:selinv}.

\subsection{$LDL^T$ factorization}\label{ssec:factorization}

Throughout this paper, we assume that the Hamiltonian $H$ is a real and sparse matrix, i.e.\ we assume that the underlying partial differential equation is discretized using a real and spatially localized basis like atomic orbitals or finite elements, and we exclude plane-wave discretizations.
The matrices $A := H - z_kI$ passed to the selected inversion algorithm are therefore complex symmetric, i.e.\ they have complex entries $A(i,j) \in \mathbb{C}$ but satisfy $A(i,j) = A(j,i)$ rather than $A(i,j) = \overline{A(j,i)}$ due to the complex shifts $z_k \in \mathbb{C}$.
The appropriate triangular factorization for such matrices is the $LDL^T$ factorization introduced in the following theorem.

\begin{theorem}[{{\cite[Theorem 3.2.1]{GV96}}}]\label{thm:factorization_existence}
    Let $A \in \mathbb{C}^{n \times n}$ be a symmetric matrix such that all the leading submatrices $A(\ell,\ell)$ with $\ell = \{1, \ldots, i\}$ and $i$ ranging from $1$ to $n$ are invertible. Then, there exist matrices $L,D \in \mathbb{C}^{n \times n}$ such that $L$ is lower-triangular with unit diagonal, $D$ is diagonal and $A = LDL^T$.
\end{theorem}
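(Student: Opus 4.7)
The plan is to proceed by induction on $n$. The base case $n = 1$ is immediate: take $L = 1$ and $D = A$. For the inductive step, I would partition
\begin{equation*}
    A = \begin{pmatrix} \tilde A & a \\ a^T & \alpha \end{pmatrix}
\end{equation*}
where $\tilde A \in \mathbb{C}^{(n-1) \times (n-1)}$ is the leading principal submatrix of order $n-1$. Since every leading submatrix of $\tilde A$ is also a leading submatrix of $A$, the invertibility hypothesis passes to $\tilde A$ and the inductive hypothesis produces a factorization $\tilde A = \tilde L \tilde D \tilde L^T$ with $\tilde L$ unit lower-triangular and $\tilde D$ diagonal.

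I would then look for $L$ and $D$ of the form
\begin{equation*}
    L = \begin{pmatrix} \tilde L & 0 \\ \ell^T & 1 \end{pmatrix},
    \qquad
    D = \begin{pmatrix} \tilde D & 0 \\ 0 & d \end{pmatrix},
\end{equation*}
expand $L D L^T$ blockwise, and match entries with $A$. The $(1,1)$ block matches automatically by the inductive hypothesis, and the off-diagonal and $(2,2)$ blocks yield the equations $\tilde L \tilde D \ell = a$ and $d = \alpha - \ell^T \tilde D \ell$. Solving these determines $L$ and $D$ and completes the induction.

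The only nontrivial point is that the equation $\tilde L \tilde D \ell = a$ be uniquely solvable, which requires both $\tilde L$ and $\tilde D$ to be invertible. Invertibility of $\tilde L$ is immediate from its unit triangular structure, and invertibility of $\tilde D$ follows by taking determinants in $\tilde A = \tilde L \tilde D \tilde L^T$: since $\det \tilde L = 1$, one has $\det \tilde D = \det \tilde A \neq 0$ by the hypothesis on leading submatrices of $A$. This is the crux of the argument; the remaining steps are routine bookkeeping. An alternative route would be to invoke the existence and uniqueness of the $LU$ factorization under the same hypothesis, write $U = D L^T$ by symmetry of $A$, and read off the factorization directly, but the inductive version above is more self-contained and mirrors the column-by-column computation that underlies the selected inversion algorithm reviewed in the next subsection.
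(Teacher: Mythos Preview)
Your inductive argument is correct and is the standard textbook proof. The paper does not supply its own proof of this theorem; it simply cites \cite[Theorem 3.2.1]{GV96}, so there is nothing further to compare.
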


\begin{definition}\label{rem:selinv_notation}
    We use the following notation throughout this section.
    \begin{itemize}
        \item $A \in \mathbb{C}^{n \times n}$ denotes a symmetric matrix satisfying the conditions of \cref{thm:factorization_existence}, and we denote by $L,D \in \mathbb{C}^{n \times n}$ the $LDL^T$ factors of $A$.
        \item Unless specified otherwise, $i,j \in \{1, \ldots, n\}$ refer to an entry in the lower triangle ($i \geq j$), and we set $\ell = \{1, \ldots, j-1\}$, $\bar \ell := \ell \cup \{j\}$, $r := \{j+1, \ldots, n\}$ and $\bar r := r \cup \{j\}$.
        \item $\mathrm{nz}(A)$ and $\mathrm{fnz}(A)$ denote the set of all nonzeros indices in $A$ and its $LDL^T$ factorization, respectively, i.e.\
        \begin{align}
            \mathrm{nz}(A)
            &:=
            \bigl\{
            (i,j) \in \{1, \ldots, n\}^2
            \mid
            A(i,j) \neq 0
            \bigr\}
            ,\\
            \mathrm{fnz}(H)
            &:=
            \bigl\{
            (i,j) \in \{1, \ldots, n\}^2
            \mid
            L(i,j) \neq 0
            \text{ or }
            L(j,i) \neq 0
            \bigr\}
            .
        \end{align}
    \end{itemize}
\end{definition}

The $LDL^T$ factorization of a given matrix $A$ may be computed using the well-known Gaussian elimination algorithm (see e.g.\ \cite[\S 3.2]{GV96}) which we will derive from the following result.\footnote{
Both \cref{thm:factorization_recursive,thm:factorization} were derived independently by the author, but given the importance of triangular factorizations and the simplicity of our formulae, we assume that similar statements have appeared previously in the literature.
}

\begin{theorem}\label{thm:factorization_recursive}
    In the notation of \cref{rem:selinv_notation}, {we have} that
    \begin{equation}\label{eqn:factorization_recursive}
        L(i,j) \, D(j,j) = A(i,j) - L(i,\ell) \, D(\ell,\ell) \, L^T(\ell,j)
        .
    \end{equation}
\end{theorem}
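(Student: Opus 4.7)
The plan is to obtain the formula by expanding the $(i,j)$ entry of the matrix identity $A = LDL^T$ and exploiting the triangular structure of $L$. Concretely, I would start from
\[
A(i,j) = \sum_{k=1}^{n} L(i,k) \, D(k,k) \, L^T(k,j) = \sum_{k=1}^{n} L(i,k) \, D(k,k) \, L(j,k),
\]
which is just the definition of matrix multiplication applied to $LDL^T$ together with symmetry of $D$.

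Next I would use lower-triangularity to cut the summation range. Since $L$ is lower triangular, $L(j,k) = 0$ whenever $k > j$, so the sum reduces to $\sum_{k=1}^{j} L(i,k) \, D(k,k) \, L(j,k)$. (The assumption $i \geq j$ from \cref{rem:selinv_notation} ensures $L(i,k)$ is not automatically zero at $k = j$, so no further truncation is needed.) I would then peel off the terminal index $k = j$, using $L(j,j) = 1$ (the unit-diagonal condition on $L$), to obtain
\[
A(i,j) = L(i,j) \, D(j,j) + \sum_{k=1}^{j-1} L(i,k) \, D(k,k) \, L(j,k).
\]

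Finally, I would recognize the remaining sum as a submatrix product: with $\ell = \{1, \ldots, j-1\}$, the term $\sum_{k \in \ell} L(i,k) \, D(k,k) \, L(j,k)$ is exactly the entry of $L(i,\ell) \, D(\ell,\ell) \, L^T(\ell,j)$ viewed as a $1 \times 1$ matrix, since $D(\ell,\ell)$ is diagonal. Rearranging gives the claimed identity \cref{eqn:factorization_recursive}.

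There is no substantive obstacle here; the statement is a direct bookkeeping consequence of $A = LDL^T$, the unit-diagonal lower-triangular structure of $L$, and the diagonal structure of $D$. The only thing that requires care is matching the index ranges to the block-notation conventions of \cref{rem:selinv_notation}, in particular making sure that the case $j = 1$ (where $\ell = \emptyset$ and the sum is empty) is handled correctly, which it is by convention.
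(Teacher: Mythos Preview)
Your argument is correct and is the most direct way to obtain \cref{eqn:factorization_recursive}: expand the $(i,j)$ entry of $A = LDL^T$, truncate the sum using lower-triangularity of $L$, and split off the $k=j$ term using $L(j,j)=1$.

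The paper, however, takes a different route. It first establishes the Schur-complement identity
\[
L(i,j)\,D(j,j) = A(i,j) - A(i,\ell)\,A(\ell,\ell)^{-1}\,A(\ell,j)
\]
(\cref{thm:factorization}) via a block $LDL^T$ decomposition of $A$, and then derives \cref{eqn:factorization_recursive} by substituting $A(i,\ell) = L(i,\ell)\,D(\ell,\ell)\,L^T(\ell,\ell)$, $A(\ell,\ell)^{-1} = L(\ell,\ell)^{-T} D(\ell,\ell)^{-1} L(\ell,\ell)^{-1}$, and $A(\ell,j) = L(\ell,\ell)\,D(\ell,\ell)\,L^T(\ell,j)$ into that formula and cancelling. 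Your approach is shorter and more elementary for this particular statement. The paper's detour is not gratuitous, though: \cref{thm:factorization} is reused in \cref{sec:localization} (specifically in the proof of \cref{thm:localization_ldlt}), where the Schur-complement expression in terms of $A(\ell,\ell)^{-1}$ is exactly what is needed to transfer the localization of the inverse to the $L$-factor. So the paper's organization pays for itself later, whereas your direct computation proves only what is asked.
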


We observe that the right-hand side of \cref{eqn:factorization_recursive} depends only on entries $L(i,k)$, $D(k,k)$ with $k < j$, hence the two factors can be computed by starting with $D(1,1) = A(1,1)$, $L(i,1) = A(i,1) / D(i,1)$ and proceeding iteratively in left-to-right order as follows.

\begin{algorithm}[H]
    \caption{$LDL^T$ factorization}
    \label{alg:ldlt}
    \begin{algorithmic}[1]
        \For{$j = 1, \ldots, n$}
            \State $D(j,j) = A(j,j) - L(j,\ell) \, D(\ell,\ell) \, L^T(\ell,j)$
            \State $L(r,j) = \big( A(r,j) - L(r,\ell) \, D(\ell,\ell) \, L^T(r,j) \big) / D(j,j)$
        \EndFor
    \end{algorithmic}
\end{algorithm}

\Cref{thm:factorization_recursive} can be derived from the following auxiliary result which we will also use in \cref{sec:localization}.

\begin{lemma}\label{thm:factorization}
    In the notation of \cref{rem:selinv_notation}, {we have} that
    \begin{equation}\label{eqn:factorization}
        L(i,j) \, D(j,j)
        =
        A(i,j)
        -
        A(i,\ell) \, A(\ell,\ell)^{-1} \, A(\ell,j)
        .
    \end{equation}
\end{lemma}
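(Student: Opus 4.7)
The plan is to derive \cref{eqn:factorization} via a Schur complement argument applied to a $2 \times 2$ block $LDL^T$ factorization. First, I would partition $\{1, \ldots, n\} = \ell \sqcup \ell^c$ with $\ell^c := \{j, j+1, \ldots, n\}$ and write $A = LDL^T$ in the corresponding block form. Since $L$ is lower triangular, $L(\ell, \ell^c) = 0$, so matching blocks gives
\begin{align*}
    A(\ell,\ell) &= L(\ell,\ell)\,D(\ell,\ell)\,L(\ell,\ell)^T, \\
    A(\ell^c,\ell) &= L(\ell^c,\ell)\,D(\ell,\ell)\,L(\ell,\ell)^T, \\
    A(\ell^c,\ell^c) &= L(\ell^c,\ell)\,D(\ell,\ell)\,L(\ell^c,\ell)^T + L(\ell^c,\ell^c)\,D(\ell^c,\ell^c)\,L(\ell^c,\ell^c)^T.
\end{align*}
The hypothesis of \cref{thm:factorization_existence} guarantees that $A(\ell,\ell)$ and hence $D(\ell,\ell)$ are invertible, so the first equation yields the identity $A(\ell,\ell)^{-1} = L(\ell,\ell)^{-T}\,D(\ell,\ell)^{-1}\,L(\ell,\ell)^{-1}$.

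Next, I would use the second block equation to eliminate $L(\ell^c, \ell)$ from the third. Substituting and invoking the identity for $A(\ell,\ell)^{-1}$ collapses the first summand on the right-hand side of the third equation to $A(\ell^c,\ell)\,A(\ell,\ell)^{-1}\,A(\ell,\ell^c)$, leaving the Schur complement identity
\begin{equation*}
    L(\ell^c,\ell^c)\,D(\ell^c,\ell^c)\,L(\ell^c,\ell^c)^T = A(\ell^c,\ell^c) - A(\ell^c,\ell)\,A(\ell,\ell)^{-1}\,A(\ell,\ell^c).
\end{equation*}

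Finally, I would read off the $(i,j)$ entry on both sides. The key observation is that $j$ is the smallest index in $\ell^c$, so the $j$-th row of $L(\ell^c, \ell^c)$ --- being lower triangular with unit diagonal --- has a single nonzero entry, equal to $1$ at column $j$. This collapses the $(i,j)$ entry of the left-hand side to $L(i,j)\,D(j,j)$, while the right-hand side equals $A(i,j) - A(i,\ell)\,A(\ell,\ell)^{-1}\,A(\ell,j)$, yielding \cref{eqn:factorization}. The main obstacle is simply keeping the block algebra organized in the second step; the edge case $j = 1$ is automatic since then $\ell = \emptyset$ and both sides reduce to $L(i,1)\,D(1,1) = A(i,1)$, which follows directly from $A = LDL^T$ together with $L(1,1) = 1$.
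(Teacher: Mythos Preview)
Your argument is correct and follows essentially the same Schur-complement route as the paper: both partition $\{1,\ldots,n\}$ into $\ell$ and $\bar r = \ell^c$, identify $L(\ell^c,\ell^c)\,D(\ell^c,\ell^c)\,L(\ell^c,\ell^c)^T$ with the Schur complement $A(\ell^c,\ell^c) - A(\ell^c,\ell)\,A(\ell,\ell)^{-1}\,A(\ell,\ell^c)$, and then read off the $(i,j)$ entry using that $j$ is the first index in $\ell^c$. The only cosmetic difference is that the paper first writes down the block $\hat L \hat D \hat L^T$ factorization and then further factorizes the diagonal blocks, whereas you start from the full $LDL^T$ and partition it; the content is the same.
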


\begin{proof}
    The matrices
    \begin{equation}\label{eqn:block_ldlt}
        \hat L := \begin{pmatrix}
            \mathbb{I} &  \\
            A(\bar r,\ell) A(\ell,\ell)^{-1} & \mathbb{I}
        \end{pmatrix}
        , \qquad
        \hat D := \begin{pmatrix}
            A(\ell,\ell) &   \\
                         & S
        \end{pmatrix}
    \end{equation}
    with
    \begin{equation}\label{eqn:schur_complement}
        S := A(\bar r,\bar r) - A(\bar r,\ell) \, A(\ell,\ell)^{-1} \, A(\ell,\bar r)
    \end{equation}
    provide a block $LDL^T$ factorization of $A$ from which the full factorization follows by further factorizing
    \[
        L_\ell D_\ell L_\ell^T := A(\ell,\ell)
        ,\qquad
        L_{\bar r} D_{\bar r} L_{\bar r}^T := S
    \]
    and setting
    \[
        L = \hat L \, \begin{pmatrix}
            L_\ell & \\
                & L_{\bar r}
        \end{pmatrix}
        ,\qquad
        D = \begin{pmatrix}
            D_\ell & \\
                & D_{\bar r}
        \end{pmatrix}
        .
    \]
    We thus compute
    \begin{align}
        L(i,j) \, D(j,j)
        &=
        L_{\bar r}(i,j) \, D_{\bar r}(j,j)
        =
        L_{\bar r}(i,j) \, D_{\bar r}(j,j) \, L_{\bar r}^T(j,j)
        \\&=
        S(i,j)
        =
        A(i,j)
        -
        A(i,\ell) \, A(\ell,\ell)^{-1} \, A(\ell,j)
        ,
    \end{align}
    where we enumerated the rows and columns of $L_{\bar r},D_{\bar r}$ starting from $j$ rather than 1 for consistency with the indexing in the full matrices.
\end{proof}

\begin{proof}[Proof of \cref{thm:factorization_recursive}]
    It follows from the special structure of $L$ and $D$ that
    \begin{align}
        A(i,\ell)
        &=
        L(i,\ell) \, D(\ell,\ell) \, L^T(\ell,\ell)
        ,\\
        A(\ell,\ell)^{-1}
        &=
        L(\ell,\ell)^{-T} \, D(\ell,\ell)^{-1} \, L(\ell,\ell)^{-1}
        ,\\
        A(\ell,j)
        &=
        L(\ell,\ell) \, D(\ell,\ell) \, L^T(\ell,j)
        .
    \end{align}
    The claim follows by inserting these expressions into \cref{eqn:factorization}.
\end{proof}

\subsection{Sparse $LDL^T$ factorization}\label{ssec:sparse_factorization}

It is well known that \cref{alg:ldlt} ($LDL^T$ factorization) applied to a dense matrix $A$ requires $\mathcal{O}(n^3)$ floating-point operations and that these costs may be reduced to the ones shown in \cref{tbl:selinv_costs} if $A$ is sparse.
This subsection briefly recalls some important definitions and results from the theory of sparse factorization which we will use in later sections.
A textbook reference for the material presented here is provided in \cite{Dav06}.

\begin{definition}
    The \emph{graph $G(A) := \big(V(A),E(A)\big)$ of a sparse matrix $A \in \mathbb{C}^{n \times n}$} is {given} by $V(A) := \{1, \ldots, n\}$ and $E(A) := \{(j,i) \mid A(i,j) \neq 0\}$.
\end{definition}

\begin{definition}
    A \emph{fill path} between two vertices $i,j \in V(A)$ is a path $i, k_1, \ldots, k_p, j$ in $G(A)$ such that $k_1, \ldots, k_p < \min\{i,j\}$.
\end{definition}

\begin{theorem}[{{\cite[Theorem 1]{RT78}}}]\label{thm:fill_path}
    In the notation of \cref{rem:selinv_notation} and barring cancellation, {we have} that $(i,j) \in \mathrm{fnz}(A)$ if and only if there is a fill path between $i$ and $j$ in $G(A)$.
\end{theorem}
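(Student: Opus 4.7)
The plan is to base both directions on \cref{thm:factorization_recursive}. Reading its identity as a symbolic equation and interpreting \emph{barring cancellation} to mean that a sum of symbolic quantities vanishes only when every summand does, we obtain the following dichotomy: for $i > j$, $L(i,j) \neq 0$ if and only if either $A(i,j) \neq 0$ or there exists $k < j$ with both $L(i,k) \neq 0$ and $L(j,k) \neq 0$. By the symmetry of $\mathrm{fnz}(A)$ I may assume $i \geq j$ throughout, and I record the trivial inclusion $\mathrm{nz}(A) \subseteq \mathrm{fnz}(A)$ which follows from the same dichotomy applied with an empty interior sum.

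For the ``only if'' direction I would induct on $j$. The base case $j = 1$ is immediate because the sum in \cref{eqn:factorization_recursive} is empty, so $(i,1) \in \mathrm{fnz}(A)$ forces $A(i,1) \neq 0$ and the single edge $(i,1)$ is itself a fill path. For $j > 1$ the dichotomy either supplies a direct edge $(i,j) \in E(A)$ (again a trivial fill path) or produces an index $k < j$ with $(i,k),(j,k) \in \mathrm{fnz}(A)$. Since $\min(i,k) = \min(j,k) = k < j$, the inductive hypothesis gives fill paths from $i$ to $k$ and from $k$ to $j$ whose intermediate vertices all lie strictly below $k$; concatenating them and pruning any cycles yields a simple path from $i$ to $j$ through $k$ whose intermediate vertices are $\leq k < j = \min(i,j)$, i.e.\ a fill path.

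The ``if'' direction contains the main obstacle: splitting a given fill path at an intermediate vertex does not preserve the fill-path property, because the subpaths have smaller endpoints and the condition ``intermediates $<\min$ of endpoints'' can then be violated. I would circumvent this by proving a stronger auxiliary statement: any sequence $i = v_0, v_1, \ldots, v_p = j$ satisfying $(v_m, v_{m+1}) \in \mathrm{fnz}(A)$ for each $m$ and $v_1, \ldots, v_{p-1} < \min(i,j)$ satisfies $(i,j) \in \mathrm{fnz}(A)$. Since $\mathrm{nz}(A) \subseteq \mathrm{fnz}(A)$, every fill path in $G(A)$ is such a sequence, so the auxiliary statement implies the theorem.

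I would prove the auxiliary statement by induction on $p$. The case $p = 1$ is trivial; the case $p = 2$ follows by applying \cref{eqn:factorization_recursive} to the entry $(\max(v_0,v_2), \min(v_0,v_2))$ and observing that the term indexed by $k = v_1$ in the resulting sum is nonzero, since $v_1 < \min(v_0,v_2)$ and both $L(v_0, v_1)$ and $L(v_2, v_1)$ are nonzero by hypothesis, whence barring cancellation yields $L(\max(v_0,v_2), \min(v_0,v_2)) \neq 0$. For $p \geq 3$ I would pick $v_{m^*}$ to be the smallest intermediate vertex and note that both $v_{m^*-1}$ and $v_{m^*+1}$ are strictly larger than $v_{m^*}$, because each such neighbour is either a larger intermediate (by minimality and simplicity of the sequence) or one of the still-larger endpoints $i, j$. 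The $p = 2$ case then gives $(v_{m^*-1}, v_{m^*+1}) \in \mathrm{fnz}(A)$, and the inductive hypothesis applied to the length $p-1$ sequence obtained by deleting $v_{m^*}$ concludes.
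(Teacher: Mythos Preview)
The paper does not supply a proof of \cref{thm:fill_path}; it is quoted from \cite{RT78} and used as a black box. Your argument is therefore not competing with anything in the text, and on its own merits it is essentially the classical Rose--Tarjan proof: the ``only if'' direction by induction on the column index using the recursion \cref{eqn:factorization_recursive}, and the ``if'' direction by repeatedly contracting the smallest interior vertex of a fill path (your auxiliary statement is exactly the observation that the filled graph is closed under this contraction). Both directions are sound.

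One small point to tighten. In the $p \geq 3$ step you invoke ``simplicity of the sequence'' to get $v_{m^*-1} \neq v_{m^*}$ and $v_{m^*+1} \neq v_{m^*}$, but your auxiliary statement as written does not assume simplicity, and the sequence produced after deleting $v_{m^*}$ need not be simple either. The easiest fix is local: if $v_{m^*-1} = v_{m^*}$ (or $v_{m^*+1} = v_{m^*}$), then $(v_{m^*-1}, v_{m^*+1})$ already coincides with an existing edge of the sequence and you may delete $v_{m^*}$ without invoking the $p=2$ case at all. Alternatively, reduce to a simple sequence at the outset by shortcutting repeated vertices, which only decreases $p$ and preserves all hypotheses; this requires phrasing the induction as strong induction on $p$. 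Either way the gap is cosmetic.
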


\begin{figure}\centering
    \subfloat[Matrix]{
        \def\l{1em}
        \def\h{0.5em}
        \def\mark#1{#1}
        % Wrap matrix in tikzpicture to avoid alignment issues
        \begin{tikzpicture}
            \node {$
                \left(\begin{smallmatrix}
                    \hspace{\l} & \hspace{\l} & \hspace{\l} & \hspace{\l} & \hspace{\l} & \hspace{\l} \\
                    \vspace{\h} 1 & \bullet &         &         &         & \bullet \\
                    \vspace{\h} \bullet & 2 & \bullet &         &         & \textcolor{cred}{1} \\
                    \vspace{\h} & \bullet & 3 & \bullet &         & \mark{\textcolor{cred}{2}} \\
                    \vspace{\h} &         & \bullet & 4 & \bullet & \textcolor{cred}{3} \\
                    \vspace{\h} &         &         & \bullet & 5 & \bullet \\
                    \vspace{\h} \bullet & \textcolor{cred}{1} & \mark{\textcolor{cred}{2}} & \textcolor{cred}{3} & \bullet & 6 \\
                \end{smallmatrix}\right)
            $};
        \end{tikzpicture}
    }
    \hspace{4em}
    \subfloat[Graph]{
        \begin{tikzpicture}
            \foreach \i in {1,...,6} {
                \node (\i) [cvertex] at ($0.8*(\i,0)$) {\i};
            }
            \foreach \i [evaluate=\i as \angle using 40+2.5*(4-\i)] in {2,...,4} {
                \draw [cedge,cred] (\i) to [bend left=\angle] (6);
            }
            \foreach \i [remember=\i as \im (initially 1)] in {2,...,6} {
                \draw [cedge] (\im) to  (\i);
            }
            \def\y{0.2}
            \draw [cedge] (1.west) arc (90:270:\y) -- ($(6.east) - 2*(0,\y)$) arc(-90:90:\y);

            % Extra node to centre graph
            \node at (1,-3em) {};
        \end{tikzpicture}
    }
    \caption{
        Illustration of fill-in and level-of-fill for a one-dimensional periodic chain. The black numbers on the diagonal enumerate the vertices, the black dots indicate nonzero off-diagonal elements of the matrix, and the red numbers show the level-of-fill. See \cref{ex:fill_in,def:level_of_fill} for further details.
    }
    \label{fig:fill_in}
\end{figure}

\begin{example}\label{ex:fill_in}
    Consider a matrix with sparsity structure as shown in \cref{fig:fill_in}.
    By \cref{thm:fill_path}, we get fill-in between vertices 4 and 6 because we can connect these two vertices via 3, 2 and 1 which are all numbered less than 4 and 6.
    We do not get fill-in between vertices 3 and 5, however, because all paths between these vertices have to go through either 4 or 6 which are larger than 3.
\end{example}

It follows from \cref{thm:fill_path} that the number of fill-in entries depends not only on the sparsity pattern of $A$ but also on the order of the rows and columns.
While finding an optimal fill-reducing order is an $NP$-hard problem \cite{Yan81}, the following nested dissection algorithm was shown to be asymptotically optimal up to at most a logarithmic factor in \cite{Gil86}.

\begin{algorithm}[H]
    \caption{Nested dissection}
    \label{alg:nested_dissection}
    \begin{algorithmic}[1]
        \State Partition the vertices into three sets $V_1,V_2,V_\mathrm{sep}$ such that there are no edges between $V_1$ and $V_2$.
        \State\label{alg:nested_dissection:conquer}Arrange the vertices in the order $V_1,V_2,V_\mathrm{sep}$, where $V_1$ and $V_2$ are ordered recursively according to the nested dissection algorithm and the order in $V_\mathrm{sep}$ is arbitrary.
    \end{algorithmic}
\end{algorithm}

The rationale for sorting the separator $V_\mathrm{sep}$ last on \cref{alg:nested_dissection:conquer} is that this eliminates all fill paths between $V_1$ and $V_2$ and thus $L(V_2,V_1) = 0$.
However, the submatrix $L(V_\mathrm{sep},V_\mathrm{sep})$ associated with the separator is typically dense; hence the nested dissection order is most effective if $V_\mathrm{sep}$ is small and $V_1,V_2$ are of roughly equal size.

The application of the nested dissection algorithm to a structured 2D mesh is illustrated in \cref{fig:nested_dissection}.
We note that the largest separator $V_\mathrm{sep}$ returned by this algorithm (the blue vertex set in the center of \cref{fig:nested_dissection}) contains $\mathcal{O}\bigl(\sqrt{n}\bigr)$ vertices{;} thus computing the associated dense part $L(V_\mathrm{sep},V_\mathrm{sep})$ alone {requires} $\mathcal{O}\bigl(n^{3/2}\bigr)$ floating-point operations and the full factorization must be at least as expensive to compute.
It was shown in \cite{Geo73} that this lower bound is indeed achieved, which {justifies} the ($d = 2$, Runtime) entry in \cref{tbl:selinv_costs} for the factorization part of the selected inversion algorithm.
The other entries can be derived along similar lines, see e.g.\ \cite{Dav06}, and it has been shown in \cite{Ett19} that the selected inversion step has the same asymptotic complexity as the sparse factorization step (see also \cite{LLYCE09} for a similar but less general result).

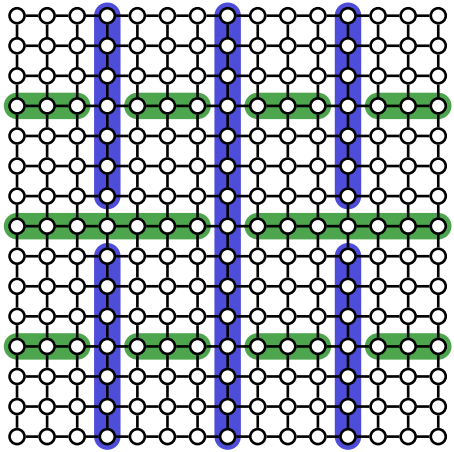
\begin{figure}
    \centering
    \begin{tikzpicture}
        \foreach \i in {1,...,15} {
        \foreach \j in {1,...,15} {
        \node (\i-\j) [cvertex] at ($0.4*(\j,-\i)$) {};
        }
        }
        \foreach \i [remember=\i as \im (initially 1)] in {2,...,15} {
        \foreach \j in {1,...,15} {
        \draw [cedge] (\im-\j) -- (\i-\j) ;
        }
        }
        \foreach \i in {1,...,15} {
        \foreach \j [remember=\j as \jm (initially 1)] in {2,...,15} {
        \draw [cedge] (\i-\jm) -- (\i-\j);
        }
        }

        \def\s{5pt}
        \begin{scope}[on background layer]
            \fill [ cblue!70] \convexpath{1-8,15-8}{\s};

            \fill [cgreen!70] \convexpath{8-1,8-7}{\s};
            \fill [cgreen!70] \convexpath{8-9,8-15}{\s};

            \fill [cblue!70] \convexpath{1-4,7-4}{\s};
            \fill [cblue!70] \convexpath{9-4,15-4}{\s};
            \fill [cblue!70] \convexpath{1-12,7-12}{\s};
            \fill [cblue!70] \convexpath{9-12,15-12}{\s};

            \fill [cgreen!70] \convexpath{4-1,4-3}{\s};
            \fill [cgreen!70] \convexpath{4-5,4-7}{\s};
            \fill [cgreen!70] \convexpath{4-9,4-11}{\s};
            \fill [cgreen!70] \convexpath{4-13,4-15}{\s};
            \fill [cgreen!70] \convexpath{12-1,12-3}{\s};
            \fill [cgreen!70] \convexpath{12-5,12-7}{\s};
            \fill [cgreen!70] \convexpath{12-9,12-11}{\s};
            \fill [cgreen!70] \convexpath{12-13,12-15}{\s};
        \end{scope}
    \end{tikzpicture}
    \caption{
        Nested dissection applied to a structured 2D mesh.
        The vertices marked in blue and green denote alternating separators $V_\mathit{sep}$.
    }
    \label{fig:nested_dissection}
\end{figure}

\subsection{Selected inversion}\label{ssec:selinv}

We now turn our attention towards computing the inverse $A^{-1}$ of a matrix $A$ given its $LDL^T$ factorization. This can be achieved using the following auxiliary result.

\begin{lemma}[\cite{TFC73}]\label{thm:inversion}
    In the notation of \cref{rem:selinv_notation}, {we have} that
    \begin{equation}\label{eqn:inversion}
        A^{-1}(i,j) = D^{-1}(i,j) - A^{-1}(i,r) \, L(r,j)
        .
    \end{equation}
\end{lemma}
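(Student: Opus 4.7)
The plan is to exploit the identity $A = LDL^T$ by rewriting it in a form that directly exposes the entries of $A^{-1}$ together with the factor $L$. Specifically, I would start from $A^{-1} = L^{-T} D^{-1} L^{-1}$ and multiply on the right by $L$ to obtain the matrix identity
\begin{equation}
    A^{-1} L = L^{-T} D^{-1}.
\end{equation}
The key observation is that the right-hand side is the product of an upper-triangular matrix with unit diagonal and a diagonal matrix, so its $(i,j)$ entry for $i \geq j$ equals $D^{-1}(i,j)$ (understood as an entry of the diagonal matrix $D^{-1}$, i.e.\ $D(j,j)^{-1}$ if $i = j$ and $0$ otherwise).

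Next I would read off the $(i,j)$ entry of the left-hand side using the unit lower-triangular structure of $L$. Since $L(k,j) = 0$ for $k < j$ and $L(j,j) = 1$, the sum $\sum_{k} A^{-1}(i,k) \, L(k,j)$ collapses to
\begin{equation}
    (A^{-1} L)(i,j) = A^{-1}(i,j) + A^{-1}(i,r) \, L(r,j),
\end{equation}
with $r = \{j+1,\ldots,n\}$ as in \cref{rem:selinv_notation}. Equating the two expressions for $(A^{-1} L)(i,j)$ and solving for $A^{-1}(i,j)$ yields \cref{eqn:inversion}.

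I do not anticipate any genuine obstacle: the argument is essentially bookkeeping, and the only subtlety is the conventional reading of $D^{-1}(i,j)$ as a matrix entry rather than a reciprocal, so that a single formula covers both the diagonal case $i = j$ and the strictly lower-triangular case $i > j$. One should also verify that the factorization $A^{-1} = L^{-T} D^{-1} L^{-1}$ is justified, which is immediate from the invertibility hypothesis of \cref{thm:factorization_existence} together with $\det(A) = \prod_k D(k,k)$.
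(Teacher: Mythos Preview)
Your argument is correct and coincides with the paper's: the paper invokes the identity $A^{-1} = L^{-T} D^{-1} + A^{-1}(\mathbb{I} - L)$, which is precisely your equation $A^{-1} L = L^{-T} D^{-1}$ rearranged, and then reads off the $(i,j)$ entry using the triangular structure exactly as you do.
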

\begin{proof}
    The claim follows from $A^{-1} = L^{-T} \, D^{-1} + A^{-1} \, (\mathbb{I} - L)$ which can be verified by {substituting $A^{-1}$ with $L^{-T} D^{-1} L^{-1}$}.
\end{proof}

\Cref{eqn:inversion} has the reverse property of \cref{eqn:factorization}: the right-hand side of \cref{eqn:inversion} depends only on $L,D$ and entries $A^{-1}(i,k)$ with $k > j$; hence the full inverse can be computed by starting with $A^{-1}(n,n) = D(n,n)^{-1}$ and iteratively growing the set of known entries in right-to-left order.
As in the case of the $LDL^T$ factorization, this procedure requires $\mathcal{O}(n^3)$ floating-point operations when applied to a dense matrix but may be reduced to the costs in \cref{tbl:selinv_costs} if $A$ is sparse and only the entries $A^{-1}(i,j)$ with $(i,j) \in \mathrm{fnz}(A)$ are required.
The following algorithm with
\begin{equation}\label{eqn:r_circ}
    r^\circ = r^\circ(j) := \big\{i \in \{j+1, \ldots, {n}\} \mid L(i,j) \neq 0\big\}
\end{equation}
was proposed in \cite{ET75} to achieve this.

\begin{algorithm}[H]
    \caption{Selected inversion}
    \label{alg:selinv}
    \begin{algorithmic}[1]
        \For{$j = n, \ldots, 1$}\label{alg:selinv:loop}
            \State\label{alg:selinv:lwtr}$A^{-1}(r^\circ,j) = -A^{-1}(r^\circ,r^\circ) \, L(r^\circ,j)$
            \State\label{alg:selinv:copy}$A^{-1}(j,r^\circ) = A^{-1}(r^\circ,j)^T$
            \State\label{alg:selinv:diag}$A^{-1}(j,j) = D(j,j)^{-1} - A^{-1}(j,r^\circ) \, L(r^\circ,j)$
        \EndFor
    \end{algorithmic}
\end{algorithm}

\begin{theorem}[\cite{ET75}]\label{thm:selinv_closedness}
    \Cref{alg:selinv} is correct, i.e.\ the computed entries $A^{-1}(i,j)$ agree with those of the exact inverse, and it is closed in the sense that all entries of $A^{-1}$ required at iteration $j$ have been computed in previous iterations $j' > j$.
\end{theorem}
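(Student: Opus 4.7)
The plan is to prove correctness and closedness simultaneously by descending induction on the loop index $j$, using \cref{thm:inversion} for correctness and \cref{thm:fill_path} for closedness.

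First I would verify that each line of \cref{alg:selinv} is a direct specialization of \cref{eqn:inversion}. For $i \in r^\circ$, \cref{eqn:inversion} reduces to $A^{-1}(i,j) = -A^{-1}(i,r) \, L(r,j)$ since $D^{-1}(i,j) = 0$ for $i \neq j$, and since $L(r,j)$ is supported on $r^\circ$ the product $A^{-1}(i,r) \, L(r,j)$ equals $A^{-1}(i,r^\circ) \, L(r^\circ,j)$; this is precisely Line~2. Line~3 uses the symmetry of $A^{-1}$ inherited from that of $A$, and Line~4 is \cref{eqn:inversion} with $i = j$ after the same restriction to $r^\circ$. Hence correctness at iteration $j$ reduces to the claim that the entries $A^{-1}(i,k)$ with $i,k \in r^\circ(j)$ read on the right-hand sides have already been produced, and produced correctly, at some earlier iteration $j' > j$.

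The substantive step, and the main obstacle, is the closedness claim $r^\circ(j) \times r^\circ(j) \subseteq \mathrm{fnz}(A)$, since the algorithm only ever accesses entries in $\mathrm{fnz}(A)$. The diagonal pairs $(i,i)$ lie in $\mathrm{fnz}(A)$ trivially because $L$ has unit diagonal, so by symmetry it suffices to treat $i > k > j$. By definition $L(i,j) \neq 0$ and $L(k,j) \neq 0$, so \cref{thm:fill_path} supplies fill paths $i, a_1, \ldots, a_p, j$ and $k, b_1, \ldots, b_q, j$ whose intermediate vertices are strictly less than $j$. Concatenating them through their common endpoint $j$ yields a path $i, a_1, \ldots, a_p, j, b_q, \ldots, b_1, k$ from $i$ to $k$ whose intermediate vertices $a_m$, $b_m$, and $j$ are all strictly less than $\min\{i,k\} = k$, so this is itself a fill path from $i$ to $k$. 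Applying \cref{thm:fill_path} in the reverse direction yields $(i,k) \in \mathrm{fnz}(A)$, which is exactly what is needed.

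With closedness established, the descending induction closes: the base case $j = n$ is immediate from \cref{eqn:inversion} because $r$ is empty, leaving only $A^{-1}(n,n) = D(n,n)^{-1}$. For the inductive step, the closedness argument and the induction hypothesis together guarantee that the entries $A^{-1}(i,k)$ with $i,k \in r^\circ(j)$ have been written at earlier iterations and coincide with the true inverse, so the right-hand sides of Lines~2 and~4 are evaluated correctly, and the new entries written at iteration $j$ match the exact inverse by the reduction of \cref{eqn:inversion} described above. Correctness itself is thus essentially a rereading of \cref{thm:inversion}; the fill path combinatorics used for closedness is the only nontrivial ingredient of the argument.
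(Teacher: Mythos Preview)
Your argument is correct and matches the paper's approach: correctness follows from \cref{thm:inversion} together with the observation that $L(r\setminus r^\circ,j)=0$, and closedness from the fill-path concatenation argument, which the paper isolates as a separate lemma (\cref{thm:selinv_paths}) but which you inline directly. The explicit descending-induction wrapper you add is a cosmetic difference only.
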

\begin{proof}
    (Correctness.)
    The formulae in \cref{alg:selinv} agree with those of \cref{thm:inversion} except that $r^\circ$ is used instead of $r$ in the products
    $
    A^{-1}(i, r^\circ) \, L(r^\circ, j)
    .
    $
    This does not change the result of the computations since $L(r \setminus r^\circ, j) = 0$ by the definition of $r^\circ$, hence the computed entries are correct.

    (Closedness.)
    The entries of $A^{-1}$ required on \cref{alg:selinv:copy,alg:selinv:diag} are computed on \cref{alg:selinv:lwtr}{;} thus it remains to show that the entries $A^{-1}(i,k)$ with $i,k \in r^\circ(j)$ required on \cref{alg:selinv:lwtr} have been computed in iterations $j' > j$.
    Due to the symmetry of $A$, we can assume $i \geq k$ without loss of generality, and since the diagonal entry $A^{-1}(k,k)$ is explicitly computed on \cref{alg:selinv:diag} in iteration $j = k$, we may further restrict our attention to indices $i > k$.
    Such an entry $A^{-1}(i,k)$ is computed in iteration $j =  k$ if and only if $i \in r^\circ(k)$, hence the claim follows from the following lemma.
\end{proof}

\begin{lemma}[{\cite{ET75}}]\label{thm:selinv_paths}
    In the notation of \cref{rem:selinv_notation} and with $r^\circ(j)$ as in \cref{eqn:r_circ}, {we have} that
    \[
    i,k \in r^\circ(j)
    \quad\text{and}\quad
    i > k
    \qquad\implies\qquad
    i \in r^\circ(k)
    .
    \]
\end{lemma}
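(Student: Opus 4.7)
The plan is to prove this by concatenating fill paths and invoking \cref{thm:fill_path}. First I would unpack the hypothesis: $i \in r^\circ(j)$ and $k \in r^\circ(j)$ mean $L(i,j) \neq 0$ and $L(k,j) \neq 0$ with $i > k > j$, so by \cref{thm:fill_path} there exist fill paths
\[
P_1: i, a_1, \ldots, a_p, j
\qquad\text{and}\qquad
P_2: k, b_1, \ldots, b_q, j
\]
in $G(A)$, where by the definition of a fill path all the $a_s$ and $b_t$ are strictly less than $\min\{i,j\} = j$ and $\min\{k,j\} = j$ respectively.

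Next I would concatenate $P_1$ with the reverse of $P_2$ to obtain the walk
\[
Q: i, a_1, \ldots, a_p, j, b_q, \ldots, b_1, k.
\]
Each intermediate vertex of $Q$ is either one of the $a_s$, one of the $b_t$, or $j$ itself, so every intermediate vertex satisfies $\leq j < k = \min\{i,k\}$. If $Q$ revisits vertices it can be shortened in the usual way to a simple path between $i$ and $k$ without increasing the intermediate-vertex labels, so we obtain a genuine fill path between $i$ and $k$. Applying \cref{thm:fill_path} in the reverse direction gives $(i,k) \in \mathrm{fnz}(A)$, and since $i > k$ and $L$ is lower triangular this forces $L(i,k) \neq 0$, i.e.\ $i \in r^\circ(k)$.

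I do not foresee a serious obstacle: the argument is essentially graph-theoretic and follows immediately from the fill-path characterization, with the only substantive observation being that $j$, which sits between the two halves of the concatenated path, is itself strictly less than $\min\{i,k\}$. The only caveat is the ``barring cancellation'' disclaimer inherited from \cref{thm:fill_path}, which I would either carry over explicitly or absorb into the convention that nonzero structural entries are not accidentally zero.
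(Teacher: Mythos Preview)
Your proposal is correct and is essentially identical to the paper's proof: both unpack $i,k \in r^\circ(j)$ via \cref{thm:fill_path} as two fill paths to $j$, concatenate them through $j$, and observe that $j < k = \min\{i,k\}$ makes the result a fill path from $i$ to $k$. Your version is slightly more explicit about shortening the walk to a simple path and about the cancellation caveat, but the argument is the same.
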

\begin{proof}
    According to \cref{thm:fill_path}, $i,k \in r^\circ(j)$ holds if and only if there exist fill paths from $i$ and $k$ to $j$, i.e.\ the graph structure is given by
    \[
    \begin{tikzpicture}
        \foreach \i/\l in {1/j, 2/k, 3/i} {
        \node (\l) [cvertex] at ($1*(\i,0)$) {$\l$};
        }
        \draw [cedge,black] (i) to [bend right=45] (j);
        \draw [cedge,black] (j) to [bend right=42.5] (k);
        \draw [cedge,cred] (i) to [bend left=42.5] (k);
    \end{tikzpicture}
    \vspace{-0.5em}
    \]
    where the two black edges indicate the aforementioned fill paths.
    Concatenating these two paths yields the red fill path from $i$ to $k$; hence the claim follows.
\end{proof}

\section{Exponential localization}\label{sec:localization}

This section will establish in \cref{ssec:localization_ldlt} that the $LDL^T$ factorization computed by the selected inversion algorithm exhibits a localization property similar to that of the density matrix $f_{\beta,E_F}(H)$ described in \cite{Koh96,BBR13}.
This result will be shown as a consequence of the localization of $(H-zI)^{-1}$ described in \cref{ssec:localization_inv} which in turn is related to the rate of convergence of polynomial approximation to $\frac{1}{x - z}$ as discussed in \cref{ssec:polynomial_approximation}.

\subsection{Polynomial approximation}\label{ssec:polynomial_approximation}

We begin by introducing the notation required for characterizing the convergence of polynomial approximation to analytic functions.

\begin{definition}\label{def:asymptotic_rate_of_decay}
    A sequence $a : \mathbb{N} \to [0,\infty)$ is said to {decay exponentially with asymptotic rate $\alpha$} if for all $\tilde \alpha < \alpha$ there exists a constant $C(\tilde \alpha)$ such that $a_k \leq C(\tilde \alpha) \, \exp(-\tilde\alpha k)$ for all $k \in \mathbb{N}$.
    We write
    $
    a_k \lesssim_\varepsilon \exp(-\alpha k)
    $
    for such sequences.
\end{definition}

We note that $a_k \lesssim_\varepsilon \exp(-\alpha k)$ is slightly weaker than $a_k \lesssim \exp(-\alpha k)$ since in the former case the implied prefactor $C(\tilde \alpha)$ is allowed to diverge for $\tilde \alpha \to \alpha$.
For the purposes of this paper, the distinction between $a_k \lesssim_\varepsilon \exp(-\alpha k)$ and $a_k \lesssim \exp(-\alpha k)$ is required for correctness but it is of little practical importance.

\begin{theorem}[{\cite[\S12]{Tre13}, \cite[Thm.\ 4.1]{Saf10}}]\label{thm:polynomial_approximation}
    Let $\mathcal{E} \subset \mathbb{C}$ be a compact and non-polar set and denote by $g_\mathcal{E} : \mathbb{C} \to [0,\infty)$ the Green's function associated with the complement of $\mathcal{E}$ (see \cref{rem:logarithmic_potential_theory} regarding non-polar sets and Green's functions).
    We then have
    \begin{equation}\label{eqn:polynomial_approximation}
        \min_{p \in \mathcal{P}_d} \bigl\|\tfrac{1}{x-z} - p(x)\bigr\|_\mathcal{E}
        \lesssim_\varepsilon
        \exp\bigl(-g_\mathcal{E}(z) \, d \bigr)
    \end{equation}
    where $\mathcal{P}_d$ denotes the set of polynomials with degrees bounded by $d$ and $\|\cdot\|_\mathcal{E}$ denotes the supremum norm on $\mathcal{E}$.
\end{theorem}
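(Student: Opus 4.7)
The plan is to prove the stronger Walsh-type statement that
\[
    \limsup_{d\to\infty}
    \Bigl(\min_{p\in\mathcal{P}_d}\bigl\|\tfrac{1}{x-z}-p(x)\bigr\|_\mathcal{E}\Bigr)^{1/d}
    \leq
    \exp\bigl(-g_\mathcal{E}(z)\bigr),
\]
which is equivalent to the claimed bound in the sense of \cref{def:asymptotic_rate_of_decay}: indeed, for every $\tilde\alpha < g_\mathcal{E}(z)$ this $\limsup$ inequality supplies a constant $C(\tilde\alpha)$ such that the error is below $C(\tilde\alpha)\exp(-\tilde\alpha d)$ for all $d$.

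To obtain an explicit near-optimal polynomial I would use Lagrange interpolation at a triangular array of Fekete points $\zeta_1^{(d)},\dots,\zeta_d^{(d)}\in\mathcal{E}$, with nodal polynomial $\omega_d(x)=\prod_{k=1}^d(x-\zeta_k^{(d)})$. Since $f(x):=1/(x-z)$ is analytic on $\mathbb{C}\setminus\{z\}$, Hermite's remainder formula yields, for any rectifiable contour $\Gamma$ enclosing $\mathcal{E}$ but leaving $z$ in its exterior,
\[
    f(x)-p_{d-1}(x)
    =
    \frac{1}{2\pi i}\oint_\Gamma \frac{\omega_d(x)}{\omega_d(\zeta)}\,\frac{d\zeta}{(\zeta-z)(\zeta-x)},
    \qquad x\in\mathcal{E},
\]
where $p_{d-1}\in\mathcal{P}_{d-1}$ is the Lagrange interpolant of $f$ at the $\zeta_k^{(d)}$. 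Specializing $\Gamma$ to the level line $\Gamma_R:=\{\zeta:g_\mathcal{E}(\zeta)=R\}$ for $0<R<g_\mathcal{E}(z)$ reduces the matter to an ML-type estimate: bounding the integrand above by
\[
    \frac{\|\omega_d\|_\mathcal{E}}{\min_{\zeta\in\Gamma_R}|\omega_d(\zeta)|}\cdot\frac{1}{\mathrm{dist}(z,\Gamma_R)\,\mathrm{dist}(\mathcal{E},\Gamma_R)},
\]
times the length of $\Gamma_R$, gives a uniform bound on $\|f-p_{d-1}\|_\mathcal{E}$.

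The quantitative content is then the classical Bernstein--Walsh / Fekete asymptotics: for a non-polar compact $\mathcal{E}$,
\[
    \|\omega_d\|_\mathcal{E}^{1/d}\;\longrightarrow\;\mathrm{cap}(\mathcal{E}),
    \qquad
    |\omega_d(\zeta)|^{1/d}\;\longrightarrow\;\mathrm{cap}(\mathcal{E})\,\exp\bigl(g_\mathcal{E}(\zeta)\bigr)
\]
with the second convergence uniform on compacta of $\mathbb{C}\setminus\mathcal{E}$. Inserting these into the ML-estimate makes the ratio $\|\omega_d\|_\mathcal{E}/\min_{\Gamma_R}|\omega_d|$ behave like $e^{-Rd}$ up to subexponential factors, so $\min_{p\in\mathcal{P}_{d-1}}\|f-p\|_\mathcal{E}^{1/d}\le e^{-R}\,(1+o(1))$. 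Letting $R\uparrow g_\mathcal{E}(z)$, which is admissible because $g_\mathcal{E}$ is continuous on $\mathbb{C}\setminus\mathcal{E}$ (capacity being positive by non-polarity), yields the desired $\limsup$ bound.

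The main obstacle is making the Bernstein--Walsh asymptotics rigorous for a general non-polar compact set rather than for a smooth Jordan domain: one needs that the normalised counting measures of Fekete points converge weak-$*$ to the equilibrium measure $\mu_\mathcal{E}$, that the associated logarithmic potentials converge, and that this convergence is uniform on $\Gamma_R$. This is standard logarithmic potential theory (see for instance Saff--Totik or Ransford), and the slight loss expressed by $\lesssim_\varepsilon$ rather than $\lesssim$ in the statement is exactly what allows us to avoid quantifying the prefactor in these asymptotics. Once this machinery is granted, the remainder of the argument is an elementary contour estimate.
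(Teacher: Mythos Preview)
The paper does not prove this theorem at all: it is stated with citations to \cite[\S12]{Tre13} and \cite[Thm.~4.1]{Saf10} and no proof is given, since it is a classical result from logarithmic potential theory (the Bernstein--Walsh theorem specialized to $f(x)=1/(x-z)$). Your sketch via Fekete interpolation, the Hermite remainder formula, and the asymptotics of Fekete nodal polynomials is precisely the standard argument one finds in those references, and it is correct in outline; the passage from the $\limsup$ statement to the $\lesssim_\varepsilon$ notation is exactly right. The only points where you would need to be a little careful in a full write-up are that the level set $\{g_\mathcal{E}=R\}$ need not be a single rectifiable Jordan curve for a general compact $\mathcal{E}$ (it may have several components, and one should pick $R$ to avoid critical values of $g_\mathcal{E}$), and that the uniform convergence of $|\omega_d|^{1/d}$ on $\Gamma_R$ requires the weak-$*$ convergence of Fekete measures plus a continuity-of-potentials argument---both of which you correctly flag as ``standard logarithmic potential theory'' and defer to Ransford or Saff--Totik.
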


\begin{remark}\label{rem:logarithmic_potential_theory}
The notions of non-polar sets and Green's functions in \cref{thm:polynomial_approximation} originate in the field of logarithmic potential theory \cite{Saf10,Ran95}.
Readers unfamiliar with these concepts may take \cref{thm:polynomial_approximation} as the definition of the Green's function $g_\mathcal{E}(z)$: it is the function which maps $z \in \mathbb{C}$ to the asymptotic rate of convergence of polynomial approximation to $\frac{1}{x - z}$ on $\mathcal{E}$.
It is clear that this rate of convergence $g_\mathcal{E}(z)$ is infinity if $\mathcal{E}$ is ``too small'', in which case we refer to $\mathcal{E}$ as polar.
An important example of polar sets are countable sets, and even though there are uncountable polar sets (see e.g.\ \cite[Thm. 5.3.7]{Ran95}), we expect that countable sets are the only polar sets one will encounter in practical applications of the theory presented in this paper.
We therefore encourage the reader to mentally replace ``polar'' with ``countable'' if doing so facilitates the reading of this paper.

The following properties of $g_\mathcal{E}(z)$ are relevant for our purposes.
Unless specified otherwise, these results can be found e.g.\ in \cite{Saf10}.

\begin{itemize}
    \item If $\delta\mathcal{E} \subset \mathbb{C}$ is polar, then $g_\mathcal{E}(z) = g_{\mathcal{E} \cup \delta\mathcal{E}}(z)$, i.e.\ Green's function are invariant under perturbations in polar sets.
    In the context of \cref{thm:localization_inv}, this implies that the localization of $(H-zI)^{-1}$ is independent of the point spectrum of $H$ which confirms similar findings in \cite{OTC19}.

    \item If $\mathcal{E} = [-1,1]$, then
    $
    g_{[-1,1]}(z) = \log\left(\left|z + \sqrt[\star]{z^2-1}\right|\right)
    $
    where
    \[
    \sqrt[\star]{z^2-1}
    :=
    \begin{cases}
        \sqrt{z^2-1} & \text{if } \arg(z) \in (-\tfrac{\pi}{2},\tfrac{\pi}{2}], \\
        -\sqrt{z^2-1} & \text{otherwise}, \\
    \end{cases}
    \]
    and $\sqrt{z}$ denotes the principal square root $\sqrt{z} := \sqrt{|z|} \, \exp\left(i\,\tfrac{\arg(z)}{2}\right)$ with $\arg(z) \in (-\pi,\pi]$.

    \item The Green's function $g_{[a,b]}(z)$ for arbitrary intervals $[a,b]$ can be expressed in terms of the above Green's function as
    \[
    g_{[a,b]}(z) = g_{[-1,1]}\Bigl(\tfrac{2}{b-a} \bigl(z - \tfrac{b+a}{2}\bigr)\Bigr)
    .
    \]

    \item If $\mathcal{E} = [a,b] \cup [c,d]$ with $a < b < c < d$, then
    \[
    g_{\mathcal{E}}(z) = \real\left(\int_a^z f(u) \, (s-u) \, du\right)
    \]
    where
    \[
    f(u) :=
    \frac{1}{ \sqrt{u-a}\,\sqrt{u-b}\,\sqrt{u-c}\,\sqrt{u-d} }
    ,\qquad
    s = \frac{\int_b^c f(u) \, u \, du}{\int_b^c f(u) \, du}
    .
    \]
    This Green's function has been derived in \cite{SSW01}, which also discusses the extension to an arbitrary number of intervals on the real line.
\end{itemize}
\end{remark}

\subsection{Localization of inverse}\label{ssec:localization_inv}

\begin{definition}\label{rem:loc_notation}
    We use the following notation for the remainder of this section.
    \begin{itemize}
        \item $H \in \mathbb{C}^{n \times n}$ denotes a sparse, symmetric matrix, and $i,j \in \{1, \ldots, n\}$ are indices for an entry in the lower triangle, i.e.\ $i \geq j$.
        \item $\mathcal{E} \subset \mathbb{R}$ denotes a compact and non-polar set (cf.\ \cref{rem:logarithmic_potential_theory}) independent of $n$ such that the spectra of all leading submatrices $H(\ell,\ell)$ with $\ell = \{1, \ldots, i\}$ and $i$ ranging from $1$ to $n$ are contained in $\mathcal{E}$.
        We will further comment on this assumption in \cref{rem:submatrix_spectra}.
        \item $z \in \mathbb{C}\setminus\mathcal{E}$ denotes a point outside $\mathcal{E}$.
        \item $L,D$ denote the $LDL^T$ factors of $H-zI$.
        \item $d(i,j)$ denotes the graph distance in $G(H)$, which is defined as the minimal number of edges on any path between $i$ and $j$, or $\infty$ if there are no such paths.
    \end{itemize}
\end{definition}

\begin{theorem}\label{thm:localization_inv}
    In the notation of \cref{rem:loc_notation}, {we have} that
    \[
        \big|(H-zI)^{-1}(i,j)\big|
        \lesssim_\varepsilon
        \exp\bigl(- g_\mathcal{E}(z) \, d(i,j)\bigr)
        .
    \]
    (The notation $\lesssim_\varepsilon$ {was} introduced in \cref{def:asymptotic_rate_of_decay}, and $g_\mathcal{E}(z)$ denotes the Green's function from \cref{thm:polynomial_approximation}.)
\end{theorem}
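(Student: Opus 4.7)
Set $d := d(i,j)$. The proof is the standard combination of graph-theoretic locality of polynomials in $H$ with polynomial approximation on the spectrum, and proceeds in two steps.

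First, I would observe that $p(H)(i,j) = 0$ for every polynomial $p$ of degree at most $d-1$. Indeed, the entry $H^k(i,j)$ can be written as a sum over walks of length $k$ from $i$ to $j$ in $G(H)$, and no such walks exist when $k < d$ by definition of the graph distance; hence every monomial $H^k$ with $k < d$, and therefore $p(H)$, has a vanishing $(i,j)$ entry. Picking any $p \in \mathcal{P}_{d-1}$ and bounding an entry by the operator $2$-norm gives
\[
\bigl|(H-zI)^{-1}(i,j)\bigr|
=
\bigl|\bigl((H-zI)^{-1} - p(H)\bigr)(i,j)\bigr|
\leq
\bigl\|(H-zI)^{-1} - p(H)\bigr\|_2 .
\]

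Second, since $H$ is symmetric with spectrum contained in $\mathcal{E} \subset \mathbb{R}$ (which by \cref{rem:loc_notation} is part of the standing assumption and forces $H$ to be unitarily diagonalisable), the spectral theorem turns the operator-norm bound into a scalar approximation problem,
\[
\bigl\|(H-zI)^{-1} - p(H)\bigr\|_2
=
\max_{\lambda \in \mathrm{spec}(H)} \Bigl|\tfrac{1}{\lambda - z} - p(\lambda)\Bigr|
\leq
\Bigl\|\tfrac{1}{x - z} - p(x)\Bigr\|_\mathcal{E} .
\]
Taking the minimum over $p \in \mathcal{P}_{d-1}$ and applying \cref{thm:polynomial_approximation} yields
\[
\bigl|(H-zI)^{-1}(i,j)\bigr|
\lesssim_\varepsilon
\exp\bigl(-g_\mathcal{E}(z)\,(d-1)\bigr),
\]
and the single-factor discrepancy between $d-1$ and $d$ is absorbed into the prefactor hidden in $\lesssim_\varepsilon$, giving the claimed bound.

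\textbf{Main obstacle.} Nothing is conceptually hard: both ingredients --- the polynomial-nullity property from graph distance and the exponential convergence of best polynomial approximants to $\tfrac{1}{x-z}$ on $\mathcal{E}$ --- are either immediate from the definitions or already established in \cref{thm:polynomial_approximation}. The only mildly delicate point is the spectral-theorem step, which uses that $H$ is normal; this is precisely why \cref{rem:loc_notation} insists that the symmetric matrix $H$ has all leading submatrix spectra on the real line, most naturally realised by $H$ being Hermitian (real symmetric). The edge case $d = 0$ (diagonal entries) is trivial, as $|(H-zI)^{-1}(i,i)| \leq \|(H-zI)^{-1}\|_2 = \mathrm{dist}(z,\mathcal{E})^{-1}$ is a constant that is again swept into the implicit prefactor.
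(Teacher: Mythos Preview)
Your proposal is correct and follows essentially the same route as the paper: the paper factors the argument through a lemma stating $|f(H)(i,j)| \leq \inf_{p \in \mathcal{P}_{d(i,j)-1}} \|f-p\|_\mathcal{E}$, proved exactly by your two steps (vanishing of $p(H)(i,j)$, then the entrywise-to-operator-norm-to-sup-norm chain), and then invokes \cref{thm:polynomial_approximation}. Your added remarks on the $d-1$ versus $d$ discrepancy and the diagonal case are correct refinements that the paper leaves implicit.
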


The proof of \cref{thm:localization_inv} follows immediately from \cref{thm:polynomial_approximation} {and} the following lemma.

\begin{lemma}[{\cite{DMS84}}]\label{polynomial bound}
    In the notation of \cref{rem:loc_notation}, {we have} for all bounded $f : \mathcal{E} \to \mathbb{C}$ that
    \[
        \big|f(H)(i,j)\big|
        \leq
        \inf_{p \in \mathcal{P}_{d(i,j)-1}} \|f - p\|_\mathcal{E}
    \]
    where $\mathcal{P}_{k}$ denotes the space of polynomials of degree $\leq k$.
\end{lemma}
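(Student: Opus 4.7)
The proof will combine a purely combinatorial fact about powers of sparse matrices with the spectral theorem for symmetric matrices, connected via a standard best-polynomial subtraction trick.

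The main ingredient is the identity $p(H)(i,j) = 0$ for every polynomial $p \in \mathcal{P}_{d(i,j)-1}$. To see this, expand
\[
    H^m(i,j)
    =
    \sum_{k_1, \ldots, k_{m-1}} H(i,k_1) \, H(k_1,k_2) \cdots H(k_{m-1},j),
\]
and observe that a summand can only be nonzero if the sequence $j, k_{m-1}, \ldots, k_1, i$ traces a walk of length $m$ in $G(H)$. By the very definition of $d(i,j)$, no such walk exists whenever $m < d(i,j)$, so $H^m(i,j) = 0$ in that range. Linearity in the coefficients of $p$ then extends this to all polynomials of degree at most $d(i,j) - 1$.

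With this in hand, for any $p \in \mathcal{P}_{d(i,j)-1}$ we may freely subtract $p(H)$ without changing the $(i,j)$ entry:
\[
    f(H)(i,j)
    =
    \bigl(f(H) - p(H)\bigr)(i,j)
    =
    e_i^T (f-p)(H) \, e_j.
\]
Since $|e_i^T M e_j| \leq \|M\|_2$ for any matrix $M$, and since $H$ is symmetric with spectrum contained in $\mathcal{E}$, the spectral theorem gives $\|(f-p)(H)\|_2 = \max_{\lambda \in \sigma(H)} |(f-p)(\lambda)| \leq \|f-p\|_\mathcal{E}$. Taking the infimum over $p \in \mathcal{P}_{d(i,j)-1}$ delivers the stated bound.

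The one step that requires mild care is the passage from the operator norm of $(f-p)(H)$ to the supremum of $f-p$ over $\mathcal{E}$: this is where the symmetry (equivalently normality) of $H$ enters crucially, since only then is the functional calculus isometric in the $\|\cdot\|_2$-to-$\|\cdot\|_\mathcal{E}$ sense. Everything else is essentially bookkeeping — the combinatorial annihilation of low-degree polynomial entries is the genuinely structural content, and it is what makes the graph distance $d(i,j)$ rather than the matrix dimension enter the bound.
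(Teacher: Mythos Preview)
Your proof is correct and follows essentially the same route as the paper: establish $p(H)(i,j)=0$ for $p\in\mathcal{P}_{d(i,j)-1}$, subtract, bound the entry by the operator norm, and pass to $\|f-p\|_\mathcal{E}$ via the spectral theorem. The only difference is that you spell out the walk-counting argument for $p(H)(i,j)=0$, which the paper simply asserts.
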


\begin{proof}
    Since $p \in \mathcal{P}_{d(i,j)-1}$, {we have} that $p(H)(i,j) = 0$ and thus
    \begin{align}
        |f(H)(i,j)|
        &\leq
        |p(H)(i,j)|
        +
        |f(H)(i,j) - p(H)(i,j)|
        \\&\leq
        0 +
        \|f(H) - p(H)\|_2
        \\&\leq
        \|f - p\|_\mathcal{E}
        .
    \end{align}
\end{proof}

\subsection{Localization of $LDL^T$ factorization}\label{ssec:localization_ldlt}
Characterizing the localization in the $LDL^T$ factorization requires a new notion of distance introduced in the following definition.

\begin{definition}[{{\cite[\S 10.3.3]{Saa03}}}]\label{def:level_of_fill}
    In the notation of \cref{rem:loc_notation}, the level-of-fill $\mathrm{level}(i,j)$ {is given by}
    \[
        \mathrm{level}(i,j)
        :=
        \max\{0,d_\mathrm{fill}(i,j)-1\}
    \]
    where $d_\mathrm{fill}(i,j)$ denotes the minimal number of edges on any fill path between $i$ and $j$, or $\infty$ if no such path exists.
\end{definition}

An example for the level-of-fill is provided in \cref{fig:fill_in}.

\begin{theorem}\label{thm:localization_ldlt}
    In the notation of \cref{rem:loc_notation}, {we have} that
    \[
        |L(i,j)|
        \lesssim_\varepsilon
        \exp\bigl(- g_\mathcal{E}(z) \, \mathrm{level}(i,j)\bigr)
        .
    \]
\end{theorem}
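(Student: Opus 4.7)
The plan is to adapt the polynomial-approximation scheme used in the proof of \cref{thm:localization_inv} to the bilinear-form representation of $L(i,j)$ provided by \cref{thm:factorization}. Writing $M := H(\ell,\ell) - zI$, that lemma gives
\[
L(i,j) \, D(j,j) \;=\; A(i,j) \,-\, A(i,\ell) \, M^{-1} \, A(\ell,j),
\]
and the main idea is to replace $M^{-1}$ by a polynomial $p(M)$ of degree $d$. Expanding $A(i,\ell) \, p(M) \, A(\ell,j)$ produces a sum of walks $i, k_1, \ldots, k_{c+1}, j$ with $c \leq d$ and all intermediate vertices $k_m$ lying in $\ell = \{1,\ldots,j-1\}$. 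Because $i \geq j$, the constraint $k_m \in \ell$ is precisely $k_m < \min\{i,j\}$, so these walks are fill paths of level $\leq d+1$.

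Combining this with the observation that $A(i,j) \neq 0$ forces a level-$0$ fill path between $i$ and $j$ yields the vanishing criterion: if $\mathrm{level}(i,j) \geq d+2$, then both $A(i,j) = 0$ and $A(i,\ell) \, p(M) \, A(\ell,j) = 0$, leaving
\[
L(i,j) \, D(j,j) \;=\; -\, A(i,\ell) \, \bigl( M^{-1} - p(M) \bigr) \, A(\ell,j).
\]
I would then bound the right-hand side by $\|A(i,\ell)\|_2 \, \|A(\ell,j)\|_2 \, \|M^{-1} - p(M)\|_2$. Since $\sigma(H(\ell,\ell)) \subseteq \mathcal{E}$ by hypothesis, the spectral-calculus step from the proof of \cref{polynomial bound} yields $\|M^{-1} - p(M)\|_2 \leq \|\phi - p\|_\mathcal{E}$ for $\phi(x) := 1/(x-z)$. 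Taking $d = \mathrm{level}(i,j) - 2$ and invoking \cref{thm:polynomial_approximation} produces the decay $\lesssim_\varepsilon \exp(-g_\mathcal{E}(z) \, \mathrm{level}(i,j))$, with the factor $\exp(2 g_\mathcal{E}(z))$ and the $\ell^2$-norm prefactors (bounded uniformly in $n$ by the sparsity of $H$) absorbed into the $\lesssim_\varepsilon$ constant.

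To pass from $L(i,j) \, D(j,j)$ to $L(i,j)$ I still need a uniform lower bound on $|D(j,j)|$. I would obtain this from the Schur-complement identity $D(j,j)^{-1} = (H(\bar\ell,\bar\ell) - zI)^{-1}(j,j)$ combined with the spectral estimate $\|(H(\bar\ell,\bar\ell) - zI)^{-1}\|_2 \leq \mathrm{dist}(z,\mathcal{E})^{-1}$, which follows from $\sigma(H(\bar\ell,\bar\ell)) \subseteq \mathcal{E}$. The edge cases $\mathrm{level}(i,j) \in \{0,1\}$ (for which the choice $d = \mathrm{level}(i,j)-2$ is not meaningful) and $\mathrm{level}(i,j) = \infty$ (where $L(i,j) = 0$ by \cref{thm:fill_path}) are handled trivially. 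The main obstacle I anticipate is the combinatorial step: verifying carefully that every walk appearing in the polynomial expansion of $A(i,\ell) \, p(M) \, A(\ell,j)$ can be shortened to a simple fill path, and conversely that the index restriction ``$k_m \in \ell$'' built into the use of $M = H(\ell,\ell) - zI$ is exactly the fill-path condition $k_m < j$.
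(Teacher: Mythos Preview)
Your proof is correct and follows essentially the same approach as the paper: both start from the Schur-complement identity of \cref{thm:factorization}, exploit the correspondence between fill paths from $i$ to $j$ and paths in the submatrix graph $G(A(\ell,\ell))$, and bound $|D(j,j)|$ from below via $D(j,j)^{-1} = A_{\bar\ell}^{-1}(j,j)$ together with $\sigma(H(\bar\ell,\bar\ell)) \subset \mathcal{E}$. The only difference is organizational: the paper invokes \cref{thm:localization_inv} entry-wise on $A(\ell,\ell)^{-1}(i^\circ,j^\circ)$ for $i^\circ \in \ell^\circ(i)$, $j^\circ \in \ell^\circ(j)$ and then sums the $\#\ell^\circ(i)\cdot\#\ell^\circ(j)$ bounded terms, whereas you unfold that lemma and apply the polynomial approximation of \cref{thm:polynomial_approximation} directly to the bilinear form $A(i,\ell)\,M^{-1}\,A(\ell,j)$.
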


\begin{proof}
    The claim is trivially true if $\mathrm{level}(i,j) = 0$; hence we restrict ourselves to $i,j$ such that $\mathrm{level}(i,j) > 0$ and $H(i,j) = 0$ in the following.
    According to \cref{thm:factorization}, {we then have} that
    \[
    L(i,j) = -A\bigl(i,\ell^\circ(i)\bigr) \, A_\ell^{-1}\bigl(\ell^\circ(i),\ell^\circ(j)\bigr) \, A\bigl(\ell^\circ(j),j\bigr) / D(j,j)
    \]
    where $A_\ell := A(\ell,\ell)$ with $\ell = \{1, \ldots, j-1\}$ and $\ell^\circ(t) := \big\{ k \in \ell \mid A(t,k) = A(k,t) \neq 0 \big\}$.
    By the definition of level-of-fill, {we have} that $d_\ell(i^\circ,j^\circ) \geq \mathrm{level}(i^\circ,j^\circ)-1$ for all $i^\circ \in \ell^\circ(i)$, $j^\circ \in \ell^\circ(j)$ and with $d_\ell(i,j)$ the graph distance in $G(A_\ell)$;
    thus $A_\ell(i^\circ,j^\circ) \lesssim_\varepsilon \exp\bigl(-g_\mathcal{E}(z)\, \mathrm{level}(i,j)\bigr)$ according to \cref{thm:localization_inv}.
    The claim follows after noting that $\# \ell^\circ(i)$, $\# \ell^\circ(j)$ are bounded independently of $n$ due to the sparsity of $H$,
    and that $|D(j,j)| \geq \min | z - \mathcal{E} |$ since $D(j,j)^{-1} = A_{\bar\ell}^{-1}(j,j)$ with $\bar \ell = \{1, \ldots, j\}$ and the spectrum of $A_{\bar \ell}$ is contained in $\mathcal{E}$ according to the assumptions in \cref{rem:loc_notation}.
\end{proof}

\begin{remark}\label{rem:submatrix_spectra}
    The assumption that the spectra of all leading submatrices $H(\ell,\ell)$ {are} contained in $\mathcal{E}$ in \cref{rem:loc_notation} was introduced specifically to allow for \cref{thm:localization_ldlt}.
    We would like to point out that this assumption can always be satisfied by choosing $\mathcal{E}$ as the convex hull of the spectrum of $H$ and that the rational approximation schemes from \cite{Mou16,LLYE09,Ett19} place the poles away from the real axis and hence outside of this convex hull.
    Furthermore, we expect that the spectra of the submatrices are usually contained in the spectrum of $H$ since in physical terms this corresponds to the assumption that the electronic properties of subsystems agree with those of the overall system.
    If true, the conditions on $\mathcal{E}$ in \cref{rem:loc_notation} are somewhat redundant and we may choose $\mathcal{E}$ simply as a non-polar set containing the spectrum of $H$.
    We will return to this point in \cref{ex:localization_numerics}.
\end{remark}

We conclude from \cref{thm:localization_inv,thm:localization_ldlt} that the entries of both the inverse $(H - zI)^{-1}(i,j)$ {and} the $L$-factor $L(i,j)$ decay exponentially with the same rate $g_\mathcal{E}(z)$ but {with} different notions of distance $d(i,j)$ and $\mathrm{level}(i,j)$, respectively.
This qualitative difference is illustrated in the following example.

\begin{figure}
    \subfloat[$A^{-1}$]{\label{fig:localization_inv}\includegraphics{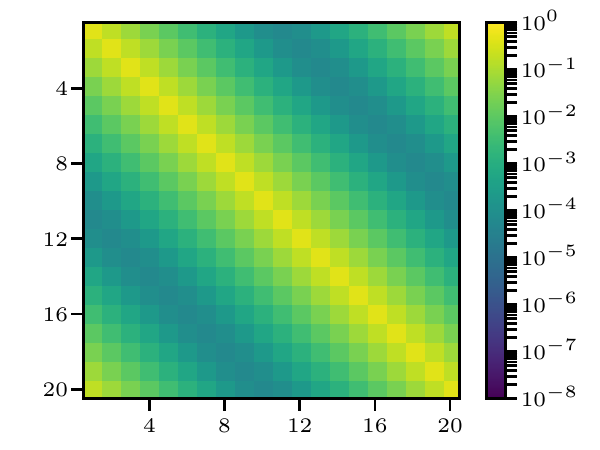}}
    \subfloat[$L$]{\label{fig:localization_ldlt}\includegraphics{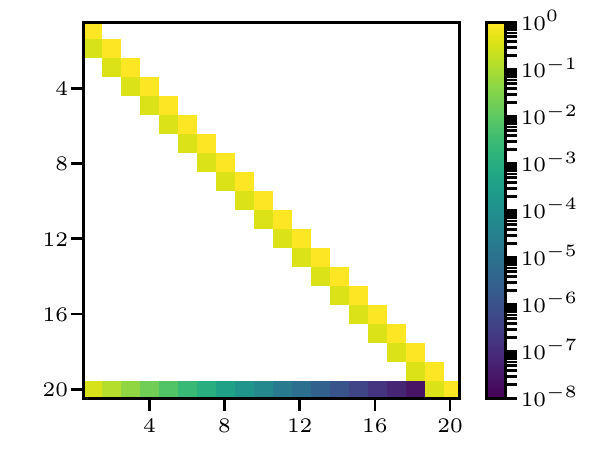}}
    \caption{
        Decay in the inverse and $L$-factor of the matrix from \cref{ex:localization}.
    }
    \label{fig:localization}
\end{figure}

\begin{example}\label{ex:localization}
    Consider the $n \times n$ matrix
    \[
    A(i,j)
    :=
    \begin{cases}
         3 & \text{if } i = j \\
        -1    & \text{if } i = j \pm 1 \mod n, \\
         0    & \text{otherwise} \\
    \end{cases}
    \]
    whose graph structure for $n = 6$ is illustrated in \cref{fig:fill_in}.
    We observe that $\mathrm{level}(n,j) = j-1$ increases monotonically from $j = 1$ to $j = n-2$ and thus $L(n,j)$ decreases monotonically over the same range, see \cref{fig:localization_ldlt}.
    Conversely, $d(n,j) = \min\{j, n-j\}$ has a maximum at $j = \tfrac{n}{2}$ and thus $|A^{-1}(n,j)|$ has a minimum at this value of $j$, see \cref{fig:localization_inv}.
\end{example}

\section{Incomplete selected inversion}\label{sec:iselinv}

\Cref{thm:localization_inv,thm:localization_ldlt} assert that $L(i,j)$ and $A^{-1}(i,j)$ are small if their indices $i$ and $j$ are far apart according to the appropriate notion of distance, which suggests that it should be possible to ignore such entries without losing much accuracy.
Following this idea, we will next present incomplete factorization and selected inversion algorithms (\cref{alg:ildlt,alg:iselinv}) which have been obtained by modifying \cref{alg:ldlt,alg:selinv} such that they operate on only the entries $L(i,j)$ and $A^{-1}(i,j)$ with indices $(i,j)$ from the restricted set
\begin{equation}\label{eqn:ifnz_lof}
    \mathrm{ifnz}(H) := \big\{(i,j) \in \mathrm{fnz}(H) \mid \mathrm{level}(i,j) \leq c\big\}
\end{equation}
for some $c > 0$.
Of course, such a modification is only worth considering if it leads to a substantial improvement in performance and negligible loss of accuracy; hence the main topic of this section will be to assess these two performance metrics.

It is easily seen that restricting the selected inversion algorithm to $\mathrm{ifnz}(H)$ results in linear-scaling computational costs.

\begin{theorem}\label{thm:iselinv_cost}
    The runtime and memory requirements of \cref{alg:ildlt,alg:iselinv} (incomplete factorization and incomplete selected inversion) are given by
    \begin{equation}\label{eqn:iselinv_cost}
        \begin{cases}
            \mathcal{O}(nc) & \text{if } d = 2, \\
            \mathcal{O}(nc^3) & \text{if } d = 3,
        \end{cases}
        \qquad
        \text{and}
        \qquad
        \begin{cases}
            \mathcal{O}(n\log(c)) & \text{if } d = 2, \\
            \mathcal{O}(nc) & \text{if } d = 3,
        \end{cases}
    \end{equation}
    respectively, where $d$ denotes the effective dimension of the problem in the sense of \cref{tbl:selinv_costs} and a nested dissection order is assumed.
\end{theorem}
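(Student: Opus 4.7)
The plan is to follow the standard nested-dissection cost accounting for the exact selected inversion algorithm (\cite{Geo73,Dav06}) and adjust each step to reflect the truncation imposed by the level-of-fill cutoff $c$.

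I would proceed in three steps. \emph{Step 1 (per-vertex sparsity count):} By \cref{thm:fill_path}, membership $(i,j) \in \mathrm{ifnz}(H)$ forces $i$ and $j$ to be joined by a fill path of combinatorial length at most $c+1$ in $G(H)$. Combined with the geometric layout of a quasi-uniform mesh of effective dimension $d$, this shows that a vertex $j$ lying on a nested-dissection separator has at most $\mathcal{O}(c^{d-1})$ partners within $\mathrm{ifnz}(H)$: the candidates form a ball of radius $\sim c$ in $G(H)$, and the numbering rules of nested dissection restrict the relevant portion of this ball (i.e., those partners that lie in $\ell^\circ(j)$ or $r^\circ(j)$) to a set of size comparable to a $(d-1)$-dimensional slice. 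In particular, $\#\ell^\circ(j)$ and $\#r^\circ(j)$ are both $\mathcal{O}(c^{d-1})$ in the coarse regime.

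\emph{Step 2 (separator-wise totals):} Under nested dissection there are $2^k$ separators at level $k$, each of size $s_k = (n/2^k)^{(d-1)/d}$ arranged as a $(d-1)$-dimensional sub-mesh of side length $m_k = (n/2^k)^{1/d}$. By Step 1, storing the restricted diagonal block of $L$ associated with one such separator costs $\mathcal{O}(s_k \, c^{d-1})$ nonzeros, and eliminating one of its vertices in \cref{alg:ldlt} or \cref{alg:selinv} costs $\mathcal{O}(c^{2(d-1)})$ operations. \emph{Step 3 (summation):} Split the $\log_2 n$ recursion levels at $k^\star := \log_2(n/c^d)$, the largest $k$ for which the separator side length $m_k$ exceeds $c$. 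For $k \leq k^\star$ (coarse regime), the cutoff is binding; summing $2^k s_k c^{d-1}$ and $2^k s_k c^{2(d-1)}$ yields geometric series in $2^{k/d}$ dominated by the term at $k = k^\star$, which telescope to $\mathcal{O}(n \, c^{d-2})$ memory and $\mathcal{O}(n \, c^{2d-3})$ runtime. For $k > k^\star$ (fine regime), the cutoff is inactive and the computation reduces to exact nested dissection on $n/c^d$ disjoint sub-problems of size $c^d$; applying \cref{tbl:selinv_costs} to each sub-problem and multiplying by their number contributes $\mathcal{O}(n \log c)$ memory and $\mathcal{O}(nc)$ runtime in 2D, and $\mathcal{O}(nc)$ memory and $\mathcal{O}(nc^3)$ runtime in 3D. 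Adding the two regimes reproduces the entries of \cref{eqn:iselinv_cost}.

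The principal technical obstacle is the per-vertex count in Step 1. While \cref{thm:fill_path} controls the combinatorial length of fill paths, it does not a priori prevent a fill path from detouring through distant early-numbered vertices, so turning ``$\mathrm{level}(i,j) \leq c$'' into a geometric $\mathcal{O}(c)$-radius statement requires additional work. The cleanest remedy is induction on the nested dissection tree: because each separator is ordered last within its sub-tree, it cannot appear as an interior vertex on a fill path between two of its descendants, and a fill path of length $\leq c+1$ between two vertices adjacent to the current separator can therefore be confined to the separator together with a single sub-domain plus ``above'' layers on higher separators. A quasi-uniformity assumption on the mesh then converts this combinatorial confinement into the claimed $\mathcal{O}(c^{d-1})$ geometric count, after which Steps 2 and 3 are routine bookkeeping that parallels the derivation of the exact bounds in \cref{tbl:selinv_costs}.
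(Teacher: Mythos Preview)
Your proposal is correct and follows essentially the same approach as the paper: both arguments partition the work into a ``fine'' regime where the cutoff is inactive and the exact nested-dissection costs from \cref{tbl:selinv_costs} apply to $\mathcal{O}(nc^{-d})$ sub-problems of size $c^d$, and a ``coarse'' regime where the per-vertex fill-path-neighbor count is bounded by $\mathcal{O}(c^{d-1})$, yielding the separator contributions $\mathcal{O}(nc^{2d-3})$ runtime and $\mathcal{O}(nc^{d-2})$ memory. The only cosmetic difference is that the paper partitions the vertex set directly into clusters $\mathcal{C}_k$ and a separator set $\mathcal{S}$ of size $\mathcal{O}(nc^{-1})$, whereas you sum level-by-level and split at the threshold $k^\star = \log_2(n/c^d)$; the paper also makes the $\mathcal{O}(c^{d-1})$ count into adjacent clusters explicit via the geometric series $\sum_{k=0}^{\log_2 c} 2^{(d-1)k}$, which is a concrete realization of the inductive argument you sketch in your final paragraph.
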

\begin{proof}
    Nested dissection ordering of $G(A)$ splits the vertices into  $\mathcal{O}\bigl(n c^{-d}\bigr)$ localized clusters $\mathcal{C}_k$ of diameter $\mathcal{O}(c)$ (the white squares in \cref{fig:nested_dissection}) separated by a complementary set $\mathcal{S}$ of
    \begin{align}
        \hspace{6em}&\hspace{-6em}
        \text{(number of $d-1$-dimensional slices)}
        \times
        \text{(number of vertices per slice)}
        =
        \ldots \\ &=
        \mathcal{O}\bigl(n^{1/d} \, c^{-1}\bigr)
        \times
        \mathcal{O}\bigl(n^{(d-1)/d}\bigr)
        =
        \mathcal{O}\bigl(nc^{-1}\bigr)
    \end{align}
    vertices (the colored bars in \cref{fig:nested_dissection}).
    On each of the clusters $\mathcal{C}_k$, incomplete selected inversion proceeds as in the exact algorithm and therefore requires
    \[
    \begin{cases}
        \mathcal{O}\bigl((c^2)^{3/2}\bigr)
        =
        \mathcal{O}\bigl(c^3\bigr)
        &
        \text{if } d = 2, \\
        \mathcal{O}\bigl((c^3)^2\bigr)
        =
        \mathcal{O}\bigl(c^6\bigr)
        &
        \text{if } d = 3 \\
    \end{cases}
    \]
    runtime and
    \[
    \begin{cases}
        \mathcal{O}\bigl(c^2 \, \log(c^2)\bigr)
        =
        \mathcal{O}\bigl(c^2 \, \log(c)\bigr)
        &
        \text{if } d = 2, \\
        \mathcal{O}\bigl((c^3)^{4/3}\bigr)
        =
        \mathcal{O}\bigl(c^4\bigr)
        &
        \text{if } d = 3 \\
    \end{cases}
    \]
    memory.
    Multiplying these estimates by the number of clusters $\mathcal{O}\bigl(nc^{-d}\bigr)$ yields the estimates given in \cref{eqn:iselinv_cost}.
    Each vertex $v$ in the separator set $\mathcal{S}$ is fill-path-connected to $\mathcal{O}\bigl(c^{d-1}\bigr)$ vertices in $\mathcal{S}$ and to
    \[
    \mathcal{O}\left(\sum_{k = 0}^{\log_2(c)} 2^{(d-1) \, k}\right)
    =
    \mathcal{O}\bigl(c^{d-1}\bigr)
    \]
    vertices in the localized clusters $\mathcal{C}_k$ adjacent to $v$,
    where the second estimate follows from the fact that $v$ can be fill-path-connected to at most $V_\mathrm{sep}$ and some subset of either $V_1$ or $V_2$ but not both for each triplet $V_1,V_2,V_\mathrm{sep}$ encountered by the nested dissection algorithm (\cref{alg:nested_dissection}) applied to the localized clusters $\mathcal{C}_k$ adjacent to $v$.
    It follows from closer inspection of \cref{alg:ildlt,alg:iselinv} (incomplete factorization and selected inversion) that the part of the selected inversion algorithm associated with $\mathcal{S}$ requires
    \begin{equation}\label{eqn:separator_runtime}
        \#\mathcal{S}
        \times
        \mathrm{nfpn}^2
        =
        \mathcal{O}\bigl(nc^{-1}\bigr) \times \mathcal{O}\bigl(c^{2(d-1)}\bigr)
        =
        \mathcal{O}\bigl(nc^{2d-3}\bigr)
    \end{equation}
    runtime and
    \begin{equation}\label{eqn:separator_memory}
        \#\mathcal{S}
        \times
        \mathrm{nfpn}
        =
        \mathcal{O}\bigl(nc^{-1}\bigr) \times \mathcal{O}\bigl(c^{d-1}\bigr)
        =
        \mathcal{O}\bigl(nc^{d-2}\bigr),
    \end{equation}
    memory, where $\mathrm{nfpn}$ denotes the number of fill-path-neighbors per $v \in \mathcal{S}$.
    These estimates agree with those given in \cref{eqn:iselinv_cost} up to the logarithmic factor for the memory requirements in the case $d = 2$.
\end{proof}

The following result summarizes our findings regarding the accuracy of incomplete selected inversion.

\begin{theorem}\label{thm:incompleteness_error}
    Assuming \cref{ass:ildlt_entry_bound}, \cref{ass:level_vs_distance} {and} \cref{ass:iselinv_dropped_entries},
    the entries $B(i,j)$ computed by \cref{alg:ildlt,alg:iselinv} (incomplete factorization and incomplete selected inversion) satisfy the bound
    \[
    \big\|(H-zI)^{-1} - B\big\|_{\mathrm{nz}(H)}
    \lesssim_\varepsilon
    \exp\bigl(- 2 \, g_\mathcal{E}(z) \, c\bigr)
    \]
    where $\|A\|_{\mathcal{I}}$ denotes the supremum norm on $\mathcal{I} \subset \{1, \ldots, n\}^2$,
    \[
    \|A\|_{\mathcal{I}}
    :=
    \max_{(i,j) \in \mathcal{I}} |A(i,j)|
    .
    \]
\end{theorem}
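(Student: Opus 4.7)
The plan is to control the $\mathrm{nz}(H)$-error of incomplete selected inversion by separately bounding (i)~the error introduced into the factors $\tilde L,\tilde D$ by dropping all entries of level-of-fill exceeding $c$, and (ii)~the error incurred in the downward sweep of selected inversion by likewise restricting the active index set. The $\exp(-2 g_\mathcal{E}(z) c)$ bound, rather than the naive $\exp(-g_\mathcal{E}(z) c)$, should then arise because the entries we are asking to reconstruct lie in $\mathrm{nz}(H)$, i.e.\ at level $0$, so the contribution of a dropped entry at level $>c$ to a target entry at level $0$ is itself attenuated by a further $\exp(-g_\mathcal{E}(z) c)$ factor.

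First, I would use \cref{ass:ildlt_entry_bound} to argue that the incomplete factors $\tilde L(i,j)$ satisfy the same localization bound as the exact factors in \cref{thm:localization_ldlt}, namely $|\tilde L(i,j)| \lesssim_\varepsilon \exp(-g_\mathcal{E}(z)\,\mathrm{level}(i,j))$, and combine this with \cref{ass:level_vs_distance} to transfer the bound into an exponential decay in graph distance on $\mathrm{fnz}(H)$. In particular, every exact entry $L(i,j)$ with $\mathrm{level}(i,j) > c$ that is discarded by \cref{alg:ildlt} has magnitude at most $\exp(-g_\mathcal{E}(z)c)$ (up to the $\lesssim_\varepsilon$-prefactor), and the corresponding perturbation of $D$ is of the same order.

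Next I would propagate these perturbations through the selected inversion formula \cref{eqn:inversion}. The critical observation is that in \cref{alg:iselinv}, the entry $B(i,j)$ for $(i,j)\in\mathrm{nz}(H)$ is assembled from products $B(i,k)\,\tilde L(k,j)$ along indices $k$ that lie on fill paths from $j$ upward; each such path either stays entirely within $\mathrm{ifnz}(H)$, in which case the only errors come from using $\tilde L,\tilde D$ in place of $L,D$, or it touches an index $k$ with $\mathrm{level}(k,\cdot) > c$, in which case \cref{ass:iselinv_dropped_entries} controls the size of the dropped contribution. In the second case, the factor $\tilde L$ involved is already of size $\exp(-g_\mathcal{E}(z) c)$ by the argument above, while the companion factor $B(i,k)$ is itself bounded by the localization of $(H-zI)^{-1}$ from \cref{thm:localization_inv} applied at distance $\gtrsim c$ from $(i,j)\in\mathrm{nz}(H)$, contributing a second $\exp(-g_\mathcal{E}(z) c)$. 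Summing these contributions (the number of active indices at each level is polynomially bounded by sparsity, and absorbed into the $\lesssim_\varepsilon$ constant) yields the claimed $\exp(-2 g_\mathcal{E}(z) c)$.

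The main obstacle I expect is the self-referential nature of step~(i): showing that the incomplete factors themselves still satisfy a \cref{thm:localization_ldlt}-style bound requires a bootstrap argument, since \cref{thm:factorization} refers to the exact $H(\ell,\ell)^{-1}$ rather than its incomplete counterpart. This is presumably exactly what \cref{ass:ildlt_entry_bound} is designed to axiomatize, so the proof will likely just invoke it; but verifying it in practice — i.e.\ checking that the perturbed Schur complements remain spectrally contained in $\mathcal{E}$ and that their inverses retain the same Green's-function-type decay — is the delicate point, and I would expect the text surrounding \cref{ass:ildlt_entry_bound,ass:level_vs_distance,ass:iselinv_dropped_entries} to carry the real analytical weight of the proof.
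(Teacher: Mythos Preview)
Your high-level decomposition into a factorization error and a selected-inversion error is correct, and your intuition for the doubled rate $2g_\mathcal{E}(z)$---that dropped entries sit at level $>c$ and hence at graph distance $\gtrsim c$ from any $(i,j)\in\mathrm{nz}(H)$, so their influence is attenuated by a second exponential factor---matches the paper's mechanism exactly.

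Where your route differs is in how the factorization error is handled. You propose to bound $|\tilde L - L|$ entrywise and then propagate these perturbations through the recursion \cref{eqn:inversion}. The paper instead uses the identity $\tilde L\tilde D\tilde L^T = A + E$ (\cref{thm:ildlt_error}): the incomplete factors are the \emph{exact} factors of a perturbed matrix $\tilde A := A+E$, so the factorization error becomes purely the matrix perturbation $A^{-1}-\tilde A^{-1}$, which is bounded by a first-order Neumann expansion (\cref{thm:ildlt_inverse_error}) without any recursion-tracking. The selected-inversion error is then isolated as $\tilde A^{-1}-B$, bounded via a separate a-posteriori estimate in terms of the dropped inversion entries $F$ (\cref{thm:iselinv_aposteriori}). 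The theorem follows by triangle inequality from the two resulting corollaries (\cref{thm:ildlt_inverse_error_final,thm:iselinv_final_error}). This buys a clean separation: no need to track how errors in $\tilde L,\tilde D$ and dropped $B$-entries interact within a single recursion.

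One point to correct: \cref{ass:ildlt_entry_bound} does not assert localization of $\tilde L$ as you guess, but rather of the dropped-entry matrix $E$, i.e.\ $|E(i,j)|\lesssim_\varepsilon\exp(-g_\mathcal{E}(z)\,\mathrm{level}(i,j))$; similarly \cref{ass:iselinv_dropped_entries} bounds the dropped inversion entries $F$. You are right, however, that these conjectures exist precisely to sidestep the recursive error-accumulation obstacle you identify in your final paragraph---the paper's discussion following each conjecture makes exactly this point.
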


Three particularities of \cref{thm:incompleteness_error} deserve further comment.
Firstly, we note \cref{thm:incompleteness_error} predicts a convergence speed which is twice as fast as one might expect based on \cref{thm:localization_inv,thm:localization_ldlt}, namely $2g_\mathcal{E}(z)$ rather than $g_\mathcal{E}(z)$.
This observation can be intuitively explained by noting that the entries $L(i,j)$ dropped by the incomplete algorithm have magnitudes $\lesssim_\varepsilon \exp\bigl(-g_\mathcal{E}(z) \, c\bigr)$ but occur at entries $i,j$ which are at least a distance $\mathrm{level}(i,j) > c$ away from the nonzeros of $H$. Propagating the resulting errors to $\mathrm{nz}(H)$ therefore attenuates them by an additional factor $\exp\bigl(-g_\mathcal{E}(z) \, c\bigr)$ which yields the final error estimate given in \cref{thm:incompleteness_error}.
Secondly, we observe that \cref{thm:incompleteness_error} is based on some conjectures which are plausible and confirmed by numerical evidence but which we have not been able to establish rigorously.
This circumstance somewhat tarnishes our result from a theoretical point of view, but we expect it to have no practical consequences.
Finally, we remark that the assumption referenced in \cref{thm:incompleteness_error} was only introduced to simplify the statement of the result and has little impact regarding the generality of our findings.

The remainder of this section is devoted to the proof of \cref{thm:incompleteness_error}.
Specifically, \cref{ssec:ildlt} will establish in \cref{thm:ildlt_inverse_error_final} a result analogous to \cref{thm:incompleteness_error} for the incomplete factorization step, and \cref{ssec:iselinv} will do the same for the incomplete selected inversion step in \cref{thm:iselinv_final_error}.
\Cref{thm:incompleteness_error} will then follow from \cref{thm:ildlt_inverse_error_final,thm:iselinv_final_error} by a simple application of the triangle inequality.

\begin{definition}\label{rem:iselinv_notation}
    This section follows the notation of \cref{rem:selinv_notation,rem:loc_notation} with $A = H-zI$ as well as the following additions.
    \begin{itemize}
        \item $\tilde r := \{k \in r \mid (k,j) \in \mathrm{ifnz}(H)\}$ with $j$ and $r$ as in \cref{rem:selinv_notation}.
        \item $c$ denotes the cut-off level-of-fill from \cref{eqn:ifnz_lof}.
        \item $\tilde L, \tilde D$ denote the incomplete $LDL^T$ factors and $E$ denotes the dropped entries computed in \cref{alg:ildlt}. Furthermore, we set $\tilde A := \tilde L \tilde D \tilde L^T$.
        \item $B(i,j) \approx A^{-1}(i,j)$ denotes the approximate entries of the inverse and $F$ denotes the dropped entries computed in \cref{alg:iselinv}.
        \item In both \cref{alg:ildlt,alg:iselinv}, we assume that matrix entries which are not specified are set to zero.
    \end{itemize}
\end{definition}

\subsection{Incomplete $LDL^T$ Factorization}\label{ssec:ildlt}

Restricting \cref{alg:ldlt} to $\mathrm{ifnz}(H)$ yields the following incomplete $LDL^T$ factorization (see \cref{rem:iselinv_notation} for notation).

\begin{algorithm}[H]
    \caption{Incomplete $LDL^T$ factorization}
    \label{alg:ildlt}
    \begin{algorithmic}[1]
        \For{$j = 1, \ldots, n$}
            \State\label{alg:ildlt_D}$\tilde D(j,j) = A(j,j) - \tilde L(j,\ell) \, \tilde D(\ell,\ell) \, \tilde L^T(\ell,j)$
            \State\label{alg:ildlt_L}$\tilde L(\tilde r,j) = \big( A(\tilde r,j) - \tilde L(\tilde r,\ell) \, \tilde D(\ell,\ell) \, \tilde L^T(\ell,j) \big) / \tilde D(j,j)$
            \State\label{alg:ildlt_E1}$E(r \setminus \tilde r,j) = \tilde L(r \setminus \tilde r,\ell) \, \tilde D(\ell,\ell) \, \tilde L^T(\ell,j)$
            \State\label{alg:ildlt_E2}$E(j,r \setminus \tilde r) = E(r \setminus \tilde r, j)^T$
        \EndFor
    \end{algorithmic}
\end{algorithm}

\Cref{alg:ildlt} is precisely the symmetric version of the well-known incomplete LU factorization commonly used as a preconditioner in iterative methods for linear systems, see e.g.\ \cite[\S 10.3]{Saa03}.
Keeping track of the dropped entries $E$ {in} \cref{alg:ildlt_E1,alg:ildlt_E2} is not required in an actual implementation, but doing so in \cref{alg:ildlt} allows us to conveniently formulate the following results regarding the errors introduced by restricting the sparsity pattern of $L$.

\begin{theorem}[{{\cite[Proposition 10.4]{Saa03}}}]\label{thm:ildlt_error}
    In the notation of \cref{rem:iselinv_notation}, {we have} that $\tilde L\tilde D\tilde L^T = A+E$.
\end{theorem}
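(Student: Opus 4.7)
The plan is a direct column-by-column verification using the definitions built into \cref{alg:ildlt}. Since both $\tilde L \tilde D \tilde L^T$ and $A + E$ are symmetric (the former by construction, the latter because $E$ is symmetrized in \cref{alg:ildlt_E2} of \cref{alg:ildlt}), it suffices to check equality on the lower triangle. Fix $j$ and $k \geq j$; using that $\tilde L$ is unit lower triangular, so $\tilde L(j,m) = 0$ for $m > j$ and $\tilde L(j,j) = 1$, I would first observe that
\begin{equation}
(\tilde L \tilde D \tilde L^T)(k,j)
= \tilde L(k,\ell)\, \tilde D(\ell,\ell)\, \tilde L^T(\ell,j) + \tilde L(k,j)\, \tilde D(j,j).
\end{equation}

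Next I would split into three cases according to whether $k$ equals $j$, lies in $\tilde r$, or lies in $r \setminus \tilde r$, and read off the relevant identity from the corresponding line of \cref{alg:ildlt}. For $k = j$, \cref{alg:ildlt_D} rearranges to $\tilde L(j,\ell)\, \tilde D(\ell,\ell)\, \tilde L^T(\ell,j) + \tilde D(j,j) = A(j,j)$, and $E(j,j) = 0$ by the convention in \cref{rem:iselinv_notation}. For $k \in \tilde r$, \cref{alg:ildlt_L} rearranges to $\tilde L(k,\ell)\, \tilde D(\ell,\ell)\, \tilde L^T(\ell,j) + \tilde L(k,j)\, \tilde D(j,j) = A(k,j)$, and again $E(k,j) = 0$ by convention. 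For $k \in r \setminus \tilde r$, the convention gives $\tilde L(k,j) = 0$, while \cref{alg:ildlt_E1} supplies $\tilde L(k,\ell)\, \tilde D(\ell,\ell)\, \tilde L^T(\ell,j) = E(k,j)$; in this case one must also note that $A(k,j) = 0$, which holds because any entry in $\mathrm{nz}(H) \subset \mathrm{fnz}(H)$ has level-of-fill zero and hence lies in $\mathrm{ifnz}(H)$, so $(k,j) \notin \mathrm{ifnz}(H)$ forces $(k,j) \notin \mathrm{nz}(H)$. In all three cases the result is $(\tilde L \tilde D \tilde L^T)(k,j) = A(k,j) + E(k,j)$, and symmetry closes the argument.

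There is no real obstacle here: once the matrix product is expanded using the triangular structure of $\tilde L$, each case corresponds to exactly one line of \cref{alg:ildlt}, and the mild bookkeeping point is merely to confirm that $A$ vanishes wherever $E$ has been recorded and vice versa.
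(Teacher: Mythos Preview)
Your proof is correct and follows essentially the same approach as the paper's: both verify the identity column by column by reading off the defining lines of \cref{alg:ildlt}, using that $A(k,j)=0$ for $k\in r\setminus\tilde r$ (since such entries have positive level-of-fill) and that $E$ vanishes on $\tilde r$ and the diagonal. The only cosmetic difference is that the paper repackages the three cases into a single recursion formula and observes that it coincides with the exact $LDL^T$ recursion of \cref{alg:ldlt} applied to $A+E$, whereas you verify $(\tilde L\tilde D\tilde L^T)(k,j)=(A+E)(k,j)$ directly.
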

\begin{proof}
    We note that $\mathrm{level}(i,j) > 0$ for all $i \in r \setminus \tilde r$; hence $A(r \setminus \tilde r,j) = 0$ and
    \[
    A(r\setminus\tilde r,j) + E(r\setminus\tilde r,j) - \tilde L(r\setminus\tilde r,\ell) \, \tilde D(\ell,\ell) \, \tilde L^T(\ell,j)
    =
    0
    \]
    according to \cref{alg:ildlt_E1} of \cref{alg:ildlt}.
    {Since} $E(\tilde r,j) = 0$ and $\tilde L(r \setminus \tilde r,j) = 0$, we can thus combine \cref{alg:ildlt_L,alg:ildlt_E1} to
    \begin{equation}\label{eqn:ildlt_error_L_recursion}
        \tilde L(r,j)
        =
        \big( A(r,j) + E(r,j) - \tilde L(r,\ell) \, \tilde D(\ell,\ell) \, \tilde L^T(r,j) \big) / \tilde D(j,j)
        ,
    \end{equation}
    and similarly we can rewrite \cref{alg:ildlt_D} as
    \begin{equation}\label{eqn:ildlt_error_D_recursion}
        \tilde D(j,j) = A(j,j) + E(j,j) - \tilde L(j,\ell) \, \tilde D(\ell,\ell) \, \tilde L^T(\ell,j)
        .
    \end{equation}
    The claim follows after noting that \cref{eqn:ildlt_error_L_recursion,eqn:ildlt_error_D_recursion} are precisely the recursion formulae of the exact $LDL^T$ factorization in \cref{alg:ldlt} applied to the symmetric matrix $A + E$.
\end{proof}

\begin{theorem}\label{thm:ildlt_inverse_error}
    In the notation of \cref{rem:iselinv_notation} and assuming $\|E\|_2 < \delta := \min |z - \mathcal{E}|$, {we have} that
    \begin{equation}\label{eqn:ildlt_inverse_error}
        \begin{aligned}
            \hspace{4em}&\hspace{-4em}
            \big|\big((H-zI)^{-1} - (H+E-zI)^{-1}\big)(i,j)\big|
            \lesssim_\varepsilon \ldots \\ & \lesssim_\varepsilon
            \sum_{\tilde{\imath}, \tilde{\jmath} = 1}^n
            \exp\Bigl(- g_\mathcal{E}(z) \, \big(d(i,\tilde{\imath}) + d(\tilde{\jmath}, j)\big)\Bigr) \, \big| E(\tilde{\imath},\tilde{\jmath}) \big|
            +
            \frac{\delta^{-2} \, \|E\|_2^2}{\delta - \|E\|_2}
            .
        \end{aligned}
    \end{equation}
    This bound is illustrated in \cref{fig:ildlt_error}.
\end{theorem}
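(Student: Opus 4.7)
The plan is to set $A := H - zI$ and expand the difference $(A+E)^{-1} - A^{-1}$ to second order using the resolvent identity. Iterating $(A+E)^{-1} = A^{-1} - A^{-1} E (A+E)^{-1}$ once yields
\[
(A+E)^{-1} - A^{-1} = -A^{-1} E A^{-1} + A^{-1} E A^{-1} E (A+E)^{-1},
\]
so by the triangle inequality it suffices to bound the first-order term $A^{-1} E A^{-1}$ entrywise and the second-order remainder $R := A^{-1} E A^{-1} E (A+E)^{-1}$ uniformly in operator norm. These two contributions produce the two summands on the right-hand side of \cref{eqn:ildlt_inverse_error}, respectively.

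For the first-order term, expanding in components gives
\[
(A^{-1} E A^{-1})(i,j) = \sum_{\tilde{\imath},\tilde{\jmath} = 1}^{n} A^{-1}(i,\tilde{\imath}) \, E(\tilde{\imath},\tilde{\jmath}) \, A^{-1}(\tilde{\jmath}, j),
\]
and applying \cref{thm:localization_inv} to each of the two $A^{-1}$ factors (with a single choice of sub-rate $\tilde\alpha < g_\mathcal{E}(z)$ so that the implied constant may be pulled out of the double sum) immediately reproduces the first summand in \cref{eqn:ildlt_inverse_error}.

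For the remainder $R$, I intend to use the elementary bound $|R(i,j)| = |e_i^T R e_j| \leq \|R\|_2$ and estimate the operator norm submultiplicatively. Since $H$ is Hermitian with spectrum contained in $\mathcal{E}$, the matrix $A = H-zI$ is normal and hence $\|A^{-1}\|_2 \leq \delta^{-1}$. Writing $A + E = A(I + A^{-1} E)$ and summing the Neumann series for $(I + A^{-1} E)^{-1}$, which converges because $\|A^{-1} E\|_2 \leq \|E\|_2 / \delta < 1$ by assumption, then yields $\|(A+E)^{-1}\|_2 \leq (\delta - \|E\|_2)^{-1}$. Chaining these three bounds submultiplicatively produces $\|R\|_2 \leq \delta^{-2} \|E\|_2^2 / (\delta - \|E\|_2)$, which is exactly the second summand in \cref{eqn:ildlt_inverse_error}.

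I do not anticipate any genuine obstacle beyond book-keeping. The only slightly delicate point is that \cref{thm:localization_inv} is stated in the $\lesssim_\varepsilon$ sense, but since every application of the theorem uses the same sub-rate $\tilde\alpha$, the implied constant is uniform over $(\tilde{\imath},\tilde{\jmath})$ and can be absorbed once into the $\lesssim_\varepsilon$ of the final bound.
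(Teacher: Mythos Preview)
Your proposal is correct and follows essentially the same approach as the paper: the paper expands $(H+E-zI)^{-1}$ in a Neumann series around $(H-zI)^{-1}$, isolates the first-order term $(H-zI)^{-1}E(H-zI)^{-1}$ and bounds it entrywise via \cref{thm:localization_inv}, and controls the tail $\sum_{k\geq 2}\bigl(-(H-zI)^{-1}E\bigr)^k(H-zI)^{-1}$ in operator norm, which is exactly your remainder $R$ after resumming the geometric series. Your iterated resolvent identity and the paper's Neumann expansion are the same decomposition written two ways, and the resulting bounds coincide term for term.
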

\begin{proof}
    Expanding $(H+E-zI)^{-1}$ in a Neumann series around $H-zI$, we obtain
    \begin{equation}\label{eqn:neumann_expansion}
        \begin{aligned}
            \hspace{4em}&\hspace{-4em}
            (H-zI)^{-1} - (H+E-zI)^{-1}
            = \ldots \\ &=
            (H-zI)^{-1} \, E \, (H-zI)^{-1}
            -
            \sum_{k = 2}^\infty \big(- (H-zI)^{-1} \, E\big)^k \, (H-zI)^{-1}
            .
        \end{aligned}
    \end{equation}
    The claim follows by estimating the entries of $(H-zI)^{-1}$ in the first term using \cref{thm:localization_inv} and bounding the entries of the second term through its operator norm.
\end{proof}

\begin{figure}
    \subfloat[$|A^{-1} - \tilde A^{-1}|$]{\label{fig:ildlt_error}\includegraphics{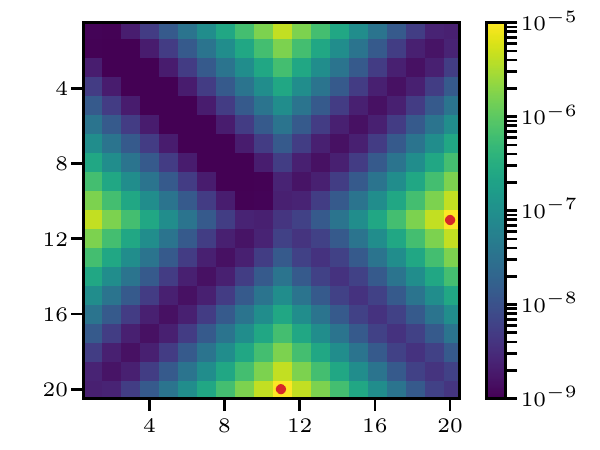}}
    \subfloat[$|\tilde A^{-1} - B|$]{\label{fig:iselinv_error}\includegraphics{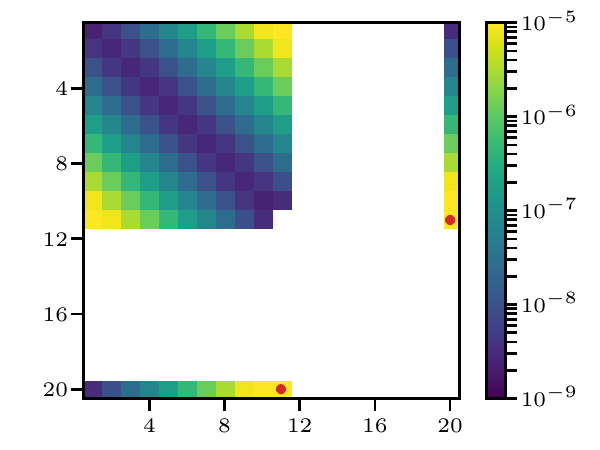}}
    \caption{
    Error introduced by the incomplete selected factorization step (left) and incomplete selected inversion step (right) applied to the matrix from \cref{ex:localization} with a cut-off level-of-fill $c = 9$.
    The red dots mark the nonzero entries in $E$ and $F$, respectively.
    }
    \label{fig:incompleteness_errors}
\end{figure}

\Cref{thm:ildlt_inverse_error} provides an a-posteriori error estimate for the inverse $(H+E-zI)^{-1}$ in terms of the dropped entries $E$, which could be used in an adaptive truncation scheme where $\mathrm{ifnz}(H)$ is of the form
\begin{equation}\label{eqn:ifnz_tol}
\mathrm{ifnz}(H)
=
\big\{
(i,j) \in \mathrm{fnz}(H)
\mid
|\tilde L(i,j)| \geq \tau
\big\}
\end{equation}
for some tolerance $\tau \geq 0$, see \cite[\S 10.4]{Saa03}.
Conversely to the level-of-fill-based scheme from \cref{eqn:ifnz_lof}, such a tolerance-based scheme would control the error but not the amount of fill-in since the perturbed entries $|\tilde L(i,j)|$ may fail to be small even when the corresponding entries $L(i,j)$ are {small}. Both schemes thus require further information about the perturbed factor $\tilde L$ or equivalently about the dropped entries $E(i,j)$ in order to simultaneously control the accuracy {and} the computational effort.
Specifically, in the case of the level-of-fill scheme \cref{eqn:ifnz_lof} we need to understand
\begin{itemize}
    \item the sparsity pattern of $E$ since this impacts the number of terms and the size of the exponential factor in \cref{eqn:ildlt_inverse_error}, and
    \item the magnitudes of the nonzero entries $E(\tilde \imath,\tilde \jmath)$.
\end{itemize}
The first of these two points is easily addressed.

\begin{lemma}\label{thm:E_level}
    In the notation of \cref{rem:iselinv_notation}, {we have} that
    \[
    E(i,j) \neq 0
    \quad\implies\quad
    c < \mathrm{level}(i,j) \leq 2c+1
    .
    \]
    In particular, the number of nonzero entries per row or column of $E$ is bounded independently of $n$.
\end{lemma}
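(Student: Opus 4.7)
I would start by unpacking when $E(i,j)$ can be nonzero. By inspection of \cref{alg:ildlt_E1,alg:ildlt_E2} of \cref{alg:ildlt}, the entry $E(i,j)$ is set only for $i \in r \setminus \tilde r$ (or its symmetric counterpart $j \in r \setminus \tilde r$), and it equals $\tilde L(i,\ell)\,\tilde D(\ell,\ell)\,\tilde L^T(\ell,j)$. Hence $E(i,j)\neq 0$ requires the existence of some intermediate index $k\in\ell = \{1,\ldots,j-1\}$ with $\tilde L(i,k)\neq 0$ and $\tilde L(j,k)\neq 0$, which means $(i,k),(k,j)\in\mathrm{ifnz}(H)$ and thus
\[
    \mathrm{level}(i,k) \le c, \qquad \mathrm{level}(k,j) \le c.
\]

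For the lower bound $c < \mathrm{level}(i,j)$, I would use that $E$ is supported only on $r\setminus\tilde r$. By the definition of $\tilde r$, this means $(i,j)\notin\mathrm{ifnz}(H)$. Concatenating the fill paths witnessing $\mathrm{level}(i,k)$ and $\mathrm{level}(k,j)$ through $k$ (which is legitimate since $k<j<i$, so $k<\min\{i,j\}$) produces a fill path between $i$ and $j$, so $(i,j)\in\mathrm{fnz}(H)$ by \cref{thm:fill_path}. Combining $(i,j)\in\mathrm{fnz}(H)\setminus\mathrm{ifnz}(H)$ with the definition \cref{eqn:ifnz_lof} gives $\mathrm{level}(i,j)>c$.

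For the upper bound $\mathrm{level}(i,j)\le 2c+1$, I would quantify the concatenation argument above. Since $\mathrm{level}(i,k)\le c$ and $\mathrm{level}(k,j)\le c$, there are fill paths of lengths $d_{\mathrm{fill}}(i,k)\le c+1$ and $d_{\mathrm{fill}}(k,j)\le c+1$; moreover the intermediate vertices of both lie below $\min\{i,k\}$ and $\min\{k,j\}$ respectively, all of which are bounded by $k<\min\{i,j\}$. Gluing the two paths at $k$ therefore yields a valid fill path from $i$ to $j$ of length at most $2c+2$, whence $\mathrm{level}(i,j) = \max\{0,d_{\mathrm{fill}}(i,j)-1\}\le 2c+1$.

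The final ``in particular'' statement then follows from bounded degree: for fixed $i$, the set of $j$ within fill-path distance $\le 2c+2$ is contained in the ball of graph distance $\le 2c+2$ in $G(H)$, whose size is bounded by $\sum_{m=0}^{2c+2}D^m$ where $D$ is the maximum degree of $G(H)$, a constant independent of $n$ by the sparsity hypothesis. I do not anticipate any real obstacles here; the only mildly delicate point is verifying that the concatenated path is actually a fill path in the sense of \cref{rem:selinv_notation}'s definition (i.e.\ that $k$ qualifies as an intermediate vertex with $k<\min\{i,j\}$), which is immediate from $k\in\ell$ and $i\in r$.
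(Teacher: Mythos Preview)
Your proposal is correct and follows essentially the same route as the paper: the paper packages your first paragraph as a separate auxiliary lemma (characterizing $E(i,j)\neq 0$ by $(i,j)\in\mathrm{fnz}(H)\setminus\mathrm{ifnz}(H)$ together with the existence of $k\in\ell$ with $(i,k),(j,k)\in\mathrm{ifnz}(H)$), and then derives the two level-of-fill bounds and the sparsity claim exactly as you do via concatenation of fill paths through $k$. The only cosmetic difference is that the paper phrases the sparsity bound as $\mathcal{O}(c^d)$ for a problem of effective dimension $d$, whereas you use the cruder bounded-degree estimate $\sum_{m=0}^{2c+2}D^m$; both are independent of $n$ for fixed $c$, which is all that is claimed.
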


The proof of this result will make use of the following lemma.

\begin{lemma}\label{thm:ildlt_entry_position}
    In the notation of \cref{rem:iselinv_notation} and barring cancellation, {we have} that $E(i,j) \neq 0$ if and only if $(i,j) \in \mathrm{fnz}(H) \setminus \mathrm{ifnz}(H)$ and there exists a $k \in \ell$ such that $(i,k), (j,k) \in \mathrm{ifnz}(H)$.
\end{lemma}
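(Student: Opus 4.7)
The plan is a direct unpacking of the formula for $E$ in \cref{alg:ildlt} combined with the characterization of fill-in via fill paths (\cref{thm:fill_path}).

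By the symmetry of $E$ we may assume $i > j$ in both directions of the equivalence. With this convention, the matrix entry $E(i,j)$ is produced by \cref{alg:ildlt_E1} in iteration $\min\{i,j\} = j$ of \cref{alg:ildlt}, and it is nonzero only if $i \in r\setminus\tilde r$, i.e.\ only if $i > j$ and $(i,j)\notin\mathrm{ifnz}(H)$. Writing out the matrix product on \cref{alg:ildlt_E1} gives
\begin{equation}\label{eqn:E_expansion_plan}
    E(i,j) = \sum_{k\in\ell} \tilde L(i,k)\,\tilde D(k,k)\,\tilde L(j,k),
\end{equation}
which, barring cancellation between distinct summands, vanishes precisely when every summand vanishes. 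Since $\tilde D(k,k)$ is nonzero in the regime of interest (because $\tilde D(k,k)^{-1}$ arose as a bounded pivot in \cref{alg:ildlt}), a summand is nonzero iff both $\tilde L(i,k)\neq 0$ and $\tilde L(j,k)\neq 0$, which by the definition of $\tilde L$ translates into $(i,k),(j,k)\in\mathrm{ifnz}(H)$.

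For the forward direction, $E(i,j)\neq 0$ then immediately gives existence of some such $k\in\ell$ and, as noted above, $(i,j)\notin\mathrm{ifnz}(H)$. The remaining ingredient is to verify $(i,j)\in\mathrm{fnz}(H)$: by \cref{thm:fill_path} the two memberships $(i,k),(j,k)\in\mathrm{fnz}(H)$ yield fill paths from $i$ to $k$ and from $j$ to $k$ whose internal vertices are all strictly less than $k$; concatenating them produces a walk from $i$ to $j$ whose internal vertices are all $\leq k < j = \min\{i,j\}$, which contains a fill path from $i$ to $j$ and hence shows $(i,j)\in\mathrm{fnz}(H)$ by \cref{thm:fill_path} again.

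For the reverse direction, $(i,j)\in\mathrm{fnz}(H)\setminus\mathrm{ifnz}(H)$ with $i>j$ places $i$ in $r\setminus\tilde r$, so $E(i,j)$ is indeed computed by \cref{alg:ildlt_E1}; the assumed $k\in\ell$ with $(i,k),(j,k)\in\mathrm{ifnz}(H)$ then produces a nonzero summand in \cref{eqn:E_expansion_plan}, and the ``barring cancellation'' hypothesis ensures $E(i,j)\neq 0$. The main thing to be careful with is keeping the index conventions straight (the asymmetric roles of $i$ and $j$ in \cref{alg:ildlt} versus the symmetric statement of the lemma) and checking that the path-concatenation argument respects the strict inequalities required for fill paths; both are routine once set up explicitly.
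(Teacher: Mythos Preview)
Your proposal is correct and follows essentially the same approach as the paper: both arguments unpack \cref{alg:ildlt_E1} to see that a nonzero update to $E(i,j)$ requires $i\in r\setminus\tilde r$ and the existence of $k\in\ell$ with $\tilde L(i,k)\,\tilde L(j,k)\neq 0$, and then translate these conditions into the stated membership criteria. Your version is in fact slightly more careful than the paper's, since you explicitly derive $(i,j)\in\mathrm{fnz}(H)$ via the fill-path concatenation argument, whereas the paper folds this into the displayed equivalence $i\in r\setminus\tilde r \iff (i,j)\in\mathrm{fnz}(H)\setminus\mathrm{ifnz}(H)$ without separate justification.
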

\begin{proof}
    The claim follows by noting that \cref{alg:ildlt_E1} in \cref{alg:ildlt} performs a nonzero update on $E(i,j)$ if and only if
    \[
    i \in r \setminus \tilde r
    \quad\iff\quad
    (i,j) \in \mathrm{fnz}(H) \setminus \mathrm{ifnz}(H)
    \]
    and there exists a $k \in \ell$ such that
    \begin{equation}
        \tilde L(i,k) \, \tilde L(j,k) \neq 0
        \quad\iff\quad
        (i,k), (j,k) \in \mathrm{ifnz}(H)
        .
    \end{equation}
\end{proof}

\begin{proof}[Proof of \cref{thm:E_level}]
    According to \cref{thm:ildlt_entry_position}, {we have} that
    \[
    E(i,j) \neq 0
    \implies
    (i,j) \in \mathrm{fnz}(H) \setminus \mathrm{ifnz}(H)
    \implies
    \mathrm{level}(i,j) > c
    .
    \]
    To derive the upper bound on $\mathrm{level}(i,j)$, let us assume that $E(i,j) \neq 0$. Then, \cref{thm:ildlt_entry_position} guarantees that there exists a vertex $k \in \ell$ such that $i,k$ and $j,k$ are connected by fill paths of lengths at most $c+1$ (recall from \cref{def:level_of_fill} that the level-of-fill is the length of the shortest path minus 1).
    Concatenating these two paths yields a fill path between $i$ and $j$ of length at most $2c+2$; hence $\mathrm{level}(i,j) \leq 2c+1$.
    Finally, the claim regarding the sparsity of $E$ follows from the fact that there are at most $\mathcal{O}\bigl(c^d\bigr)$ vertices $i$ within a distance $2c+1$ from a fixed vertex $j$ in a graph associated with a problem of effective dimension $d$.
\end{proof}

Regarding the second of the above points, namely the magnitudes of the dropped entries $E(\tilde \imath,\tilde \jmath)$, we expect that the following result holds.

\begin{conjecture}\label{ass:ildlt_entry_bound}
    In the notation of \cref{rem:iselinv_notation}, {we have} that
    \[
    |E(i,j)| \lesssim_\varepsilon \exp\bigl(-g_\mathcal{E}(z) \, \mathrm{level}(i,j)\bigr)
    .
    \]
\end{conjecture}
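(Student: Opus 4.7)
The plan is to bound $E(i,j)$ directly from \cref{alg:ildlt_E1}, which gives
\[
E(i,j) = \sum_{k \in \ell} \tilde L(i,k) \, \tilde D(k,k) \, \tilde L(j,k),
\]
and to reduce matters to a localization estimate for the incomplete factor $\tilde L$ analogous to \cref{thm:localization_ldlt}. By \cref{thm:ildlt_entry_position} only indices $k$ with $(i,k),(j,k) \in \mathrm{ifnz}(H)$ contribute, and for any such $k$ the shortest fill paths witnessing $\mathrm{level}(i,k)$ and $\mathrm{level}(j,k)$ have intermediate vertices numbered strictly below $k < j \leq i$. Concatenating them through $k$ yields a valid fill path from $i$ to $j$ of length at most $\mathrm{level}(i,k) + \mathrm{level}(j,k) + 2$, which after taking minima gives the triangle inequality
\[
\mathrm{level}(i,j) \leq \mathrm{level}(i,k) + \mathrm{level}(j,k) + 1.
\]

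Granting the analogue of \cref{thm:localization_ldlt} for the incomplete factor, namely $|\tilde L(i,k)| \lesssim_\varepsilon \exp(-g_\mathcal{E}(z)\,\mathrm{level}(i,k))$, together with a uniform upper bound on $|\tilde D(k,k)|$, the conjecture then follows quickly: each nonzero summand is bounded by $\exp(-g_\mathcal{E}(z)(\mathrm{level}(i,j)-1))$, the number of such summands is bounded independently of $n$ by the uniform row/column sparsity of $\mathrm{ifnz}(H)$ (cf.\ the argument in \cref{thm:E_level}), and the stray factor $\exp(g_\mathcal{E}(z))$ is absorbed into the $\lesssim_\varepsilon$ convention of \cref{def:asymptotic_rate_of_decay}.

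The hard part is the localization estimate for $\tilde L$ itself. By \cref{thm:ildlt_error}, $\tilde L$ and $\tilde D$ are the \emph{exact} $LDL^T$ factors of the perturbed matrix $A+E$, so in principle one would like to apply \cref{thm:localization_ldlt} to $A+E$ directly. This requires the spectra of all leading submatrices of $A+E$ to be contained in a fixed non-polar set $\mathcal{E}'$ independent of $n$ with $g_{\mathcal{E}'}(z) \geq g_\mathcal{E}(z) - \varepsilon$ for arbitrarily small $\varepsilon > 0$. The natural strategy is a bootstrap induction on the column index $j$: assume that for all $k < j$ the bound $|\tilde L(\,\cdot\,,k)| \lesssim_\varepsilon \exp(-g_\mathcal{E}(z)\,\mathrm{level}(\,\cdot\,,k))$ together with the corresponding bound on $E(\,\cdot\,,k)$ holds, deduce smallness of $E$ restricted to the first $j$ columns, use spectral perturbation theory to maintain the spectral inclusion needed by \cref{thm:localization_ldlt}, and advance to column $j+1$ via \cref{alg:ildlt_L,alg:ildlt_E1}.

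The principal obstacle is closing this induction with constants uniform in $n$: the $\lesssim_\varepsilon$ prefactors accumulate along columns, and controlling the spectral perturbation of the leading submatrices of $A+E$ uniformly in $n$ --- without the a-priori bound on $E$ that we are trying to establish in the first place --- seems to require a delicate self-consistent argument that I have not been able to complete. This is precisely the circularity that relegates the statement to a conjecture, and also the reason the result is stated in a form directly analogous to \cref{thm:localization_ldlt} rather than as a straightforward corollary thereof.
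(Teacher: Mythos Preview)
Your discussion and the paper's discussion arrive at the same verdict---the statement is a genuine conjecture, blocked by a circular dependence between bounds on $E$ and bounds on the incomplete factor---but the routes differ. The paper rewrites $E(i,j)$ as $\tilde A(i,\ell)\,\tilde A(\ell,\ell)^{-1}\,\tilde A(\ell,j)$ and expands this to first order in $E$ via a Neumann series, obtaining an explicit recursion in which the leading term is the exact $L(i,j)\,D(j,j)$ (controlled by \cref{thm:localization_ldlt}) and the higher-order terms involve previously dropped entries of $E$; it then shows that na\"ively iterating this recursion picks up a multiplicative constant $C>1$ at each step, so the bound blows up. You instead reduce everything to a localization estimate for $\tilde L$ via the triangle inequality for $\mathrm{level}$, and then propose to obtain that estimate by applying \cref{thm:localization_ldlt} to $A+E$ together with a bootstrap on the column index. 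Both framings make the recursive obstruction transparent; the paper's has the advantage that the leading term is already rigorously controlled, so the conjecture is isolated to the perturbative remainder.

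One technical point your route glosses over: even if the spectral inclusion for the leading submatrices of $A+E$ could be maintained, \cref{thm:localization_ldlt} applied to $A+E$ yields decay in the level-of-fill computed in the graph $G(A+E)$, not in $G(A)$. Since $E$ inserts edges precisely at pairs with $\mathrm{level}_{G(A)}>c$, these new edges can create shortcut fill paths and drive $\mathrm{level}_{G(A+E)}(i,j)$ well below $\mathrm{level}_{G(A)}(i,j)$, so the conclusion you need does not follow automatically. The paper's perturbative expansion sidesteps this because it works throughout with $A(\ell,\ell)^{-1}$ and the graph of $A$. This is a second obstruction on top of the constant-accumulation issue you correctly flag, and it is worth naming explicitly if you pursue the ``apply the theorem to $A+E$'' strategy.
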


\begin{proof}[Discussion]
    Reversing the substitutions in the proof of \cref{thm:factorization_recursive}, we obtain
    \[
    E(i,j)
    =
    \tilde L(i,\ell) \, \tilde D(\ell,\ell) \, \tilde L^T(\ell,j)
    =
    \tilde A(i,\ell) \, \tilde A(\ell,\ell)^{-1} \, \tilde A(\ell,j)
    ,
    \]
    and expanding the latter formula to first order in $\|E\|_2$ as in \cref{thm:ildlt_inverse_error} yields
    \begin{equation}\label{eqn:ildlt_entry_expansion}
        \begin{aligned}
            E(i,j)
            &=
            A(i,\ell) \, A(\ell,\ell)^{-1} \, A(\ell,j)
            \ldots\\&\qquad + E(i,\ell) \, A(\ell,\ell)^{-1} \, A(\ell,j)
            \ldots\\&\qquad - A(i,\ell) \, A(\ell,\ell)^{-1} E(\ell,\ell) \, A(\ell,\ell)^{-1} \, A(\ell,j)
            \ldots\\&\qquad + A(i,\ell) \, A(\ell,\ell)^{-1} \, E(\ell,j)
            + \mathcal{O}\bigl(\|E\|^2_2\bigr)
            .
        \end{aligned}
    \end{equation}
    \Cref{thm:localization_ldlt} guarantees that the first term
    \[
    A(i,\ell) \, A(\ell,\ell)^{-1} \, A(\ell,j)
    =
    L(i,j) \, D(j,j)
    \]
    on the right-hand side of \cref{eqn:ildlt_entry_expansion} satisfies $\big|L(i,j) \, D(j,j)\big| \lesssim_\varepsilon \exp\bigl(-g_\mathcal{E}(z) \, c\bigr)$, but bounding the remaining terms is challenging because the magnitudes of these terms recursively depend on the errors committed earlier.

    To illustrate this point, let us assume we have a bound
    $
    |E(\tilde \imath, \tilde \jmath)|
    \leq
    C_0
    $
    with
    $
    C_0
    \sim \exp\bigl( - g_{\mathcal{E}}(z) \, c\bigr)
    $
    for all entries of $E$ on the right-hand side of \cref{eqn:ildlt_entry_expansion} such that we can bound e.g.\ the second term by
    \begin{equation}\label{eqn:sum_term_bound}
        \big|E(i,\ell) \, A(\ell,\ell)^{-1} \, A(\ell,j)\big|
        \leq
        C_0 \,
        \sum_{k \in \ell} \big|A_\ell^{-1}(k,\ell) \, A(\ell,j)\big|
    \end{equation}
    where $A_\ell := A(\ell,\ell)$.
    From the sparsity of $A(\ell,j)$ and the localization of $A_\ell^{-1}$, it then follows that the sum on the right-hand side of \cref{eqn:sum_term_bound} decays exponentially for an appropriate ordering of the terms and can therefore be bounded by some constant $C$ independent of $n$. In general, this constant $C$ will be larger than one, however, since some $k \in \ell$ may well be close to $j$ in terms of the graph distance on $G(A_\ell)$ such that the corresponding terms are not small.
    Bounding the other terms in \cref{eqn:ildlt_entry_expansion} similarly, we thus obtain $|E(i,j)| \leq C \, C_0$ for some constant $C > 1$.

    For the next entry $E(i',j')$ to be estimated using \cref{eqn:ildlt_entry_expansion}, the entry $E(i,j)$ that we just estimated may now appear on the right-hand side such that we have to assume the bound
    $
    |E(\tilde \imath', \tilde \jmath')|
    \leq
    C \, C_0
    $
    for these entries. Proceeding analogously as above, we then obtain the bound $|E(i',j')| \leq C^2 \, C_0$ which is worse by a factor {of} $C > 1$ than the bound in the preceding step and worse by a factor {of} $C^2 > 1$ than the bound two steps ago.
    We therefore conclude that any estimate on the dropped entries $E(i,j)$ deteriorates exponentially with every recursive application of \cref{eqn:ildlt_entry_expansion}.

    The key issue in the above analysis is that without further knowledge about the entries $E(i,j)$, we have to assume that all the error terms in the recursion formula \cref{eqn:ildlt_entry_expansion} accumulate rather than cancel.
    We conjecture that such an accumulation of errors cannot occur, at least for matrices which are ``well-behaved'' in a suitable sense, but a rigorous proof of this claim requires deeper insight into the structure of the incomplete $LDL^T$ factorization and is left for future work.
\end{proof}

\Cref{ass:ildlt_entry_bound} suggests that the incomplete factorization exhibits the same localization as the exact factorization, which is in principle enough to derive an a-priori bound from  the a-posteriori bound in \cref{thm:ildlt_inverse_error}.
However, we introduce one more assumption in order to simplify the final result.

\begin{assumption}\label{ass:level_vs_distance}
    Either $\mathrm{level}(i,j) = \infty$ or $\mathrm{level}(i,j) \leq C\, d(i,j)$ for some $C > 0$ independent of $i,j$ and $n$.
\end{assumption}

\begin{proof}[Discussion]
    We have seen in \cref{ex:localization} that this assumption is not satisfied in the case of one-dimensional periodic chains, but we expect that this counterexample is the proverbial exception which proves the rule.
    In particular, we conjecture that \cref{ass:level_vs_distance} is always satisfied in dimensions $d > 1$ and if a nested dissection order is used, since in this case every pair of vertices is connected by many paths and it seems unlikely that the nested dissection order would place a high-numbered vertex on all the short paths.
    This hypothesis is supported by our numerical experiments presented in \cref{ex:iselinv_convergence} below.
    Furthermore, we will see in \cref{ex:level_vs_distance} that even if \cref{ass:level_vs_distance} is violated, the conclusions that we draw from it still hold up to some minor modifications.
\end{proof}

\begin{corollary}\label{thm:ildlt_inverse_error_final}
    In the notation of \cref{rem:iselinv_notation}{,} and assuming \cref{ass:ildlt_entry_bound} {and} \cref{ass:level_vs_distance}, {we have} that
    \[
    \big\|(H-zI)^{-1} - (H+E-zI)^{-1}\big\|_{\mathrm{nz}(H)}
    \lesssim_\varepsilon
    \exp\bigl(- 2 \, g_\mathcal{E}(z) \, c\bigr)
    .
    \]
\end{corollary}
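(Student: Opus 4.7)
The plan is to apply the a-posteriori bound from \cref{thm:ildlt_inverse_error} and control its two terms separately, feeding in the structural information on $E$ supplied by \cref{thm:E_level,ass:ildlt_entry_bound,ass:level_vs_distance}. The observation driving the factor-of-two gain in the exponent is that any nonzero entry of $E$ both has small magnitude (since its level-of-fill exceeds $c$) and sits far from $\mathrm{nz}(H)$ in graph distance (since level and graph distance are comparable under the Assumption), so the convolution $(H-zI)^{-1} E (H-zI)^{-1}$ hidden inside \cref{thm:ildlt_inverse_error} picks up two independent factors of $\exp(-g_\mathcal{E}(z) \, c)$.

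First I would verify the hypothesis $\|E\|_2 < \delta$ and dispose of the higher-order remainder. By \cref{thm:E_level}, $E$ has $\mathcal{O}(1)$ nonzero entries per row and column, each of which satisfies $|E(\tilde\imath,\tilde\jmath)| \lesssim_\varepsilon \exp(-g_\mathcal{E}(z) \, \mathrm{level}(\tilde\imath,\tilde\jmath)) \lesssim_\varepsilon \exp(-g_\mathcal{E}(z) \, c)$ by \cref{ass:ildlt_entry_bound} combined with $\mathrm{level}(\tilde\imath,\tilde\jmath) > c$. Hence $\|E\|_2 \leq \sqrt{\|E\|_1 \, \|E\|_\infty} \lesssim_\varepsilon \exp(-g_\mathcal{E}(z) \, c)$, which ensures $\|E\|_2 < \delta$ for $c$ sufficiently large and makes the remainder $\delta^{-2} \|E\|_2^2/(\delta - \|E\|_2)$ on the right-hand side of \cref{thm:ildlt_inverse_error} immediately $\lesssim_\varepsilon \exp(-2 g_\mathcal{E}(z) \, c)$.

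Next I would bound the first-order convolution term pointwise and then sum. Fix $(i,j) \in \mathrm{nz}(H)$, so $d(i,j) \leq 1$. For each $(\tilde\imath,\tilde\jmath) \in \mathrm{nz}(E)$ the triangle inequality gives $d(i,\tilde\imath) + d(\tilde\jmath,j) \geq d(\tilde\imath,\tilde\jmath) - 1$, and \cref{ass:level_vs_distance} sharpens this to $d(i,\tilde\imath) + d(\tilde\jmath,j) \geq \mathrm{level}(\tilde\imath,\tilde\jmath)/C - 1$. Combined with $|E(\tilde\imath,\tilde\jmath)| \lesssim_\varepsilon \exp(-g_\mathcal{E}(z) \, \mathrm{level}(\tilde\imath,\tilde\jmath))$, the product $\exp(-g_\mathcal{E}(z)(d(i,\tilde\imath) + d(\tilde\jmath,j))) \, |E(\tilde\imath,\tilde\jmath)|$ is $\lesssim_\varepsilon \exp(-g_\mathcal{E}(z) \, \mathrm{level}(\tilde\imath,\tilde\jmath) \, (1+1/C))$, which matches the target rate $2 g_\mathcal{E}(z) c$ in the tight regime where level and graph distance agree on $\mathrm{nz}(E)$. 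To sum this pointwise estimate over $(\tilde\imath,\tilde\jmath) \in \mathrm{nz}(E)$ without losing the rate, I would iterate: for each $\tilde\imath$ sum over the $\mathcal{O}(1)$ values of $\tilde\jmath$ with $(\tilde\imath,\tilde\jmath) \in \mathrm{nz}(E)$ (\cref{thm:E_level}), and then sum over $\tilde\imath$ using $\sum_{\tilde\imath} \exp(-g_\mathcal{E}(z) \, d(i,\tilde\imath)) = \mathcal{O}(1)$, which holds because the number of vertices at graph distance exactly $k$ grows only polynomially in a graph of effective dimension $d$. The triangle inequality then combines this with the remainder bound from the first step to yield the claim.

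The main obstacle is the careful bookkeeping in the final summation: each pointwise term satisfies the target rate, but one must split the double sum according to whether $d(i,\tilde\imath)$ or $d(\tilde\jmath,j)$ absorbs the additional $c$ worth of decay, so that both the exponential tails of $(H-zI)^{-1}$ and the level-decay of $E$ are used productively without double-counting. Underneath, the whole argument rests on \cref{ass:ildlt_entry_bound}, for which the paper only gives heuristic support; any rigorous path that bypasses this conjecture would require a direct structural analysis of the recursion for $E$ which, as indicated in the discussion following the Conjecture, is considerably more delicate than the estimates used here.
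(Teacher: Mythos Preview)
Your proposal is correct and follows essentially the same route as the paper: feed the sparsity and magnitude bounds on $E$ from \cref{thm:E_level} and \cref{ass:ildlt_entry_bound} into the a-posteriori estimate of \cref{thm:ildlt_inverse_error}, use the triangle inequality together with \cref{ass:level_vs_distance} to force $d(i,\tilde\imath)+d(\tilde\jmath,j)\gtrsim c$ for $(i,j)\in\mathrm{nz}(H)$, and control $\|E\|_2$ by row/column sparsity (the paper phrases this last step via Gershgorin rather than $\sqrt{\|E\|_1\|E\|_\infty}$, which is immaterial). The splitting you flag as ``the main obstacle'' is exactly what the paper carries out, partitioning the sum according to whether $d(i,\tilde\imath)$ or $d(\tilde\jmath,j)$ lies below $c/2$ so that the remaining variable supplies a summable geometric tail while the constrained one contributes the extra factor $\exp(-g_\mathcal{E}(z)\,c)$.
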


\begin{proof}
    It follows from \cref{thm:E_level} (sparsity of $E$) and \cref{ass:ildlt_entry_bound} (localization of $E$) that for all $\tilde\imath,\tilde\jmath \in \{1, \ldots, n\}$ {we have} that
    $
        |E(\tilde\imath,\tilde\jmath)|
        \lesssim_\varepsilon
        \exp\bigl(-g_\mathcal{E}(z) \, c\bigr)
        .
    $
    Inserting this estimate into the bound from \cref{thm:ildlt_inverse_error} (a-posteriori error bound) yields
    \begin{equation}\label{eqn:ildlt_entry_bound_with_estimates}
        \begin{aligned}
            \hspace{4em}&\hspace{-4em}
            \big|\big((H-zI)^{-1} - (H+E-zI)^{-1}\big)(i,j)\big|
            \lesssim_\varepsilon \ldots \\ & \lesssim_\varepsilon
            \sum_{(\tilde{\imath}, \tilde{\jmath}) \in \mathrm{nz}(E)}
            \exp\Bigl(- g_\mathcal{E}(z) \, \big(d(i,\tilde{\imath}) + d(\tilde{\jmath}, j) + c\big)\Bigr)
            +
            \frac{\delta^{-2} \, \|E\|_2^2}{\delta - \|E\|_2}
            .
        \end{aligned}
    \end{equation}
    We are only interested in entries $(i,j) \in \mathrm{nz}(H)$ for which $d(i,j) \leq 1${;} thus we conclude from the triangle inequality and \cref{ass:level_vs_distance} ($d(i,j) \gtrsim \mathrm{level}(i,j)$) that for all $(\tilde\imath,\tilde\jmath) \in \mathrm{nz}(E)$ {we have} that
    \[
    d(i,\tilde\imath) + d(\tilde\jmath,j) + 1
    \geq
    d(\tilde\imath,\tilde\jmath)
    \gtrsim
    \mathrm{level}(\tilde\imath,\tilde\jmath)
    \geq
    c + 1
    .
    \]
    In particular, we note that if $d(i,\tilde\imath) \lesssim \frac{c}{2}$, then $d(\tilde\jmath,j) \gtrsim \frac{c}{2}$ and vice versa, which allows us to bound the first term in \cref{eqn:ildlt_entry_bound_with_estimates} by
    \begin{align}
        \hspace{4em}&\hspace{-4em}
        \sum_{(\tilde{\imath}, \tilde{\jmath}) \in \mathrm{nz}(E)}
        \exp\Bigl(- g_\mathcal{E}(z) \, \big(d(i,\tilde{\imath}) + d(\tilde{\jmath}, j) + c\big)\Bigr)
        \lesssim\ldots\\&\lesssim
        2\sum_{\substack{\tilde \imath \text{ such that} \\ d(i,\tilde\imath) \lesssim \frac{c}{2}}} \,\,
        \sum_{\substack{\tilde \jmath \text{ such that} \\ d(\tilde\jmath,j) \gtrsim c - d(i,\tilde\imath)}}
        \exp\Bigl(- g_\mathcal{E}(z) \, \big(d(i,\tilde{\imath}) + d(\tilde{\jmath}, j) + c\big)\Bigr)
        + \ldots \\ &\qquad\qquad +
        \sum_{\substack{\tilde \imath \text{ such that} \\ d(i,\tilde\imath) \gtrsim \frac{c}{2}}} \,\,
        \sum_{\substack{\tilde \jmath \text{ such that} \\ d(\tilde\jmath,j) \gtrsim \frac{c}{2}}}
        \exp\Bigl(- g_\mathcal{E}(z) \, \big(d(i,\tilde{\imath}) + d(\tilde{\jmath}, j) + c\big)\Bigr)
        \\
        &\lesssim_\varepsilon
        \exp\bigl(-2\,g_\mathcal{E}(z) \, c\bigr)
    \end{align}
    where on the last line we estimated the infinite sums using the boundedness of the geometric series and the finite sum over $\tilde\imath$ was estimated as the largest term in the sum times the bounded number of terms.

    The second term in \cref{eqn:ildlt_entry_bound_with_estimates} can be bounded using Gershgorin's circle theorem and the facts that all diagonal entries satisfy $E(i,i) = 0$, all off-diagonal entries satisfy $|E(i,j)| \lesssim_\varepsilon \exp\bigl(-g_\mathcal{E}(z) \, c\bigr)$, and the number of off-diagonal entries is bounded independently of $n$,
    which yields $\|E\|_2 \lesssim_\varepsilon \exp\big(-g_\mathcal{E}(z) \, c\bigr)$.
    The claim then follows by combining the bounds on the two terms in \cref{eqn:ildlt_entry_bound_with_estimates}.
\end{proof}

\subsection{Incomplete Selected Inversion}\label{ssec:iselinv}

Restricting \cref{alg:selinv} to $\mathrm{ifnz}(H)$ yields the following incomplete selected inversion algorithm (see \cref{rem:iselinv_notation} for notation).

\begin{algorithm}[H]
    \caption{Incomplete selected inversion}
    \label{alg:iselinv}
    \begin{algorithmic}[1]
        \For{$j = n, \ldots, 1$}
            \State\label{alg:iselinv:lwtr}$B(\tilde r,j) = -B(\tilde r,\tilde r) \, \tilde L(\tilde r,j)$
            \State\label{alg:iselinv:copy}$B(j,\tilde r) =  B(\tilde r,j)^T$
            \State\label{alg:iselinv:diag}$B(j,j) = \tilde D(j,j)^{-1} - B(j,\tilde r) \, \tilde L(\tilde r,j)$
            \State\label{alg:iselinv:F1}$F(r\setminus\tilde r,j) = B(r\setminus\tilde r, \tilde r) \, \tilde L(\tilde r,j)$
            \State\label{alg:iselinv:F2}$F(j,r\setminus\tilde r) = F(r\setminus\tilde r, j)^T$
            \State\label{alg:iselinv:F3}$F(j,j) = F(j,\tilde r) \, \tilde L(\tilde r, j)$
        \EndFor
    \end{algorithmic}
\end{algorithm}

As in \cref{alg:ildlt}, keeping track of the dropped entries $F$ on \cref{alg:iselinv:F1,alg:iselinv:F2,alg:iselinv:F3} is not required in an actual implementation but doing so facilitates our discussion of the errors committed by this algorithm.

Our error analysis for \cref{alg:iselinv} proceeds along the same lines as in \cref{ssec:ildlt}:
we first establish an a-posteriori bound in terms of the dropped entries $F$ in \cref{thm:iselinv_aposteriori}, then we argue that $|F(i,j)|$ should decay like $|A^{-1}(i,j)|$ in \cref{ass:iselinv_dropped_entries}, and finally we derive an a-priori bound based on this conjecture in \cref{thm:iselinv_final_error}.
For all of these steps, we will need the following result which establishes that $\tilde A^{-1} = (A + E)^{-1}$ exhibits the same localization as $A^{-1}$.

\begin{lemma}\label{thm:ildlt_inverse_localization}
    In the notation of \cref{rem:iselinv_notation} and assuming \cref{ass:ildlt_entry_bound}, {we have} that
    \[
    |\tilde A^{-1}(i,j)|
    \lesssim_\varepsilon
    \exp\bigl(-g_\mathcal{E}(z) \, d(i,j)\bigr)
    .
    \]
\end{lemma}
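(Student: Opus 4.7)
The plan is to transport the localization bound for $A^{-1}$ from \cref{thm:localization_inv} to the perturbed inverse $\tilde A^{-1} = (A+E)^{-1}$ via a Neumann-series argument. Since \cref{thm:E_level} bounds the number of nonzeros per row of $E$ independently of $n$ and \cref{ass:ildlt_entry_bound} bounds each such entry by a quantity $\lesssim_\varepsilon \exp(-g_\mathcal{E}(z)(c+1))$, Gershgorin's theorem gives $\|E\|_2 \lesssim_\varepsilon \exp(-g_\mathcal{E}(z) \, c)$, so the expansion
\[
    \tilde A^{-1} = \sum_{k=0}^\infty (-1)^k \, (A^{-1} E)^k \, A^{-1}
\]
converges in operator norm once $c$ is sufficiently large.

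The core of the argument is then an entrywise bound on each term of this series. Writing the $k$-th term as a sum over paths of intermediate indices,
\[
    \bigl((A^{-1}E)^k A^{-1}\bigr)(i,j)
    =
    \sum_{a_1, b_1, \ldots, a_k, b_k}
    A^{-1}(i,a_1) \, E(a_1,b_1) \, A^{-1}(b_1,a_2) \cdots E(a_k,b_k) \, A^{-1}(b_k,j),
\]
I would apply \cref{thm:localization_inv} to every $A^{-1}$-factor and \cref{ass:ildlt_entry_bound} to every $E$-factor. The crucial observation is that $\mathrm{level}(a,b) \geq d(a,b) - 1$ because every fill path is an ordinary path in $G(H)$, which upgrades the $E$-bound to $|E(a,b)| \lesssim_\varepsilon \exp(-g_\mathcal{E}(z)(d(a,b)-1))$. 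The triangle inequality
\[
    d(i,a_1) + d(a_1,b_1) + d(b_1,a_2) + \cdots + d(b_k,j) \geq d(i,j)
\]
then factors $\exp(-g_\mathcal{E}(z) \, d(i,j))$ out of the product, and the remaining sum over intermediate indices is controlled by the polynomial growth of $G(H)$ (so that $\sum_x \exp(-g_\mathcal{E}(z) \, d(y,x)) = O(1)$) and the sparsity bound from \cref{thm:E_level} (restricting each $b_m$ given $a_m$ to one of $O(c^d)$ values).

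Summing over $k$ then delivers the desired bound. The main technical obstacle is ensuring that the geometric series in $k$ converges with ratio strictly below $1$ after all combinatorial and per-order exponential prefactors have been accounted for; the smallness $\|E\|_2 \lesssim_\varepsilon \exp(-g_\mathcal{E}(z) \, c)$ is what ultimately dominates the polynomial-in-$c$ combinatorial factors and forces convergence once $c$ is large enough. Throughout, one works with a rate $\tilde \alpha < g_\mathcal{E}(z)$ and absorbs the constants depending on $\tilde\alpha$, on $c$, and on the graph's polynomial growth into the implied constant $C(\tilde\alpha)$ permitted by the $\lesssim_\varepsilon$ notation.
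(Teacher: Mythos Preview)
Your proposal is correct and follows essentially the same Neumann-series route as the paper, which simply invokes \cref{thm:ildlt_inverse_error} (itself a first-order Neumann expansion with an operator-norm tail bound) together with \cref{thm:E_level} and \cref{ass:ildlt_entry_bound}, and then sums the resulting geometric series. Your version is in fact slightly more thorough: by bounding every order of the series entrywise via $\mathrm{level}(a,b) \geq d(a,b)-1$ and the triangle inequality, you make explicit why the higher-order terms also decay with $d(i,j)$, a point the paper's short proof leaves implicit since the operator-norm tail in \cref{thm:ildlt_inverse_error} by itself carries no $d(i,j)$-dependence.
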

\begin{proof}
    According to \cref{thm:ildlt_inverse_error} (a-posteriori error bound for $\tilde A^{-1}$), \cref{thm:E_level} (sparsity of $E$) and \cref{ass:ildlt_entry_bound} (localization of $E$), {we have} that
    \begin{align}
        |\tilde A^{-1}(i,j)|
        &\lesssim_\varepsilon
        |A^{-1}(i,j)|
        +
        \sum_{(\tilde\imath,\tilde\jmath) \in \mathrm{nz}(E)}
        \exp\Bigl(
        -g_\mathcal{E}(z) \, \big(
        d(i,\tilde\imath) +
        c +
        d(\tilde\jmath,j)
        \big)
        \Bigr)
        .
    \end{align}
    The claim follows by estimating the first term on the right hand side using \cref{thm:localization_inv} (localization of $A^{-1}$) and the second term using the boundedness of geometric series.
\end{proof}

\begin{theorem}\label{thm:iselinv_aposteriori}
    In the notation of \cref{rem:iselinv_notation} and assuming \cref{ass:ildlt_entry_bound}, {we have} that
    \begin{equation}\label{eqn:selinv_error}
        \begin{aligned}
            \Big|\big(\tilde A^{-1} - B\big)(i,j)\Big|
            &\lesssim_\varepsilon
            \sum_{\tilde{\jmath} \, = j+1}^n
            \exp\bigl(-g_\mathcal{E}(z) \, \mathrm{level}(\tilde{\jmath},j )\bigr) \, |F(i,\tilde{\jmath})|
            +\ldots \\&\qquad +
            \sum_{\tilde{\imath}, \tilde{\jmath} \, = j+1}^n
            \exp\Bigl(-g_\mathcal{E}(z) \, \big(\mathrm{level}(i,\tilde{\imath}) + \mathrm{level}(\tilde{\jmath},j)\big)\Bigr) \, |F(\tilde{\imath},\tilde{\jmath})|
            .
        \end{aligned}
    \end{equation}
    This bound is illustrated in \cref{fig:iselinv_error}.
\end{theorem}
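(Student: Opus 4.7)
The plan is to derive a first-order error recurrence for $\Delta(i,j) := \tilde A^{-1}(i,j) - B(i,j)$, unroll it along the reverse-order loop of \cref{alg:iselinv}, and collapse each resulting product of $\tilde L$-factors using the localization of $\tilde L$, which is inherited from \cref{thm:localization_ldlt} applied to $\tilde A = A+E$ under \cref{ass:ildlt_entry_bound}.

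First, I would apply \cref{thm:inversion} to $\tilde A$ and use that the strict-lower pattern of column $j$ of $\tilde L$ equals $\tilde r(j)$ to obtain
\[
\tilde A^{-1}(i,j) = \tilde D^{-1}(j,j)\,\delta_{ij} - \tilde A^{-1}\bigl(i,\tilde r(j)\bigr)\,\tilde L\bigl(\tilde r(j), j\bigr).
\]
Subtracting the formula computed by \cref{alg:iselinv:lwtr,alg:iselinv:copy,alg:iselinv:diag} of \cref{alg:iselinv} (with $B$ in place of $\tilde A^{-1}$) and using the dropped-entry definitions on \cref{alg:iselinv:F1,alg:iselinv:F2} yields, for every $i \geq j$, the master recurrence
\[
\Delta(i,j) = -F(i,j) - \Delta\bigl(i,\tilde r(j)\bigr)\,\tilde L\bigl(\tilde r(j), j\bigr),
\]
with $F(i,j) = 0$ whenever $(i,j) \in \mathrm{ifnz}(H)$ (in particular on the diagonal, so that the $F(j,j)$ recorded on \cref{alg:iselinv:F3} plays no role in the error itself).

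The second step is to iterate this recurrence. Because $\Delta$ is symmetric, each $\Delta(i,k)$ appearing on the right-hand side either stays in the lower triangle ($k \leq i$, so the recurrence is reapplied with first index still $i$) or is identified with $\Delta(k,i)$ by symmetry ($k > i$, so the recurrence is then applied at first index $k$ and column $i$). Unrolling therefore produces contributions labelled by chains of columns $j = b_0 \to b_1 \to \cdots \to b_p$ with $b_{m+1} \in \tilde r(b_m)$, whose weight is the product of $\tilde L$-factors along the chain and whose terminal value is an $F$-entry. Chains that never trigger a symmetry flip terminate at $F(i,b_p)$ and are collected into the single-sum term of \cref{eqn:selinv_error}; chains that flip at some intermediate column $\tilde i$ terminate at $F(\tilde i,\tilde j)$ (or, using $F(\tilde i,\tilde j)=F(\tilde j,\tilde i)$, at $F(\tilde j,\tilde i)$) and acquire an additional path of $\tilde L$-factors propagating from $i$ out to $\tilde i$, placing them in the double-sum term.

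The third step is to bound the products. Under \cref{ass:ildlt_entry_bound}, $\tilde L$ is the exact $LDL^T$-factor of $\tilde A = A+E$, so the reasoning of \cref{thm:localization_ldlt} gives $|\tilde L(k,j)| \lesssim_\varepsilon \exp(-g_{\mathcal{E}}(z)\,\mathrm{level}(k,j))$. Concatenating fill paths yields the sub-additivity $\mathrm{level}(a,c) \leq \mathrm{level}(a,b) + \mathrm{level}(b,c) + 1$, which collapses any product of $|\tilde L|$-factors along a chain $\tilde j = b_p \to \cdots \to b_0 = j$ to $\lesssim_\varepsilon \exp(-g_{\mathcal{E}}(z)\,\mathrm{level}(\tilde j,j))$, and analogously for the path $i \to \tilde i$ when a flip occurs. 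The sparsity of $\tilde L$ keeps the branching factor at each recursion depth bounded independently of $n$, so the resulting chain sum is controlled by a geometric series and, combined with a downward induction on $j$ from the base case $\Delta(n,n)=0$, closes to the form claimed in \cref{eqn:selinv_error}.

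The main obstacle I anticipate is the combinatorial bookkeeping around the symmetry-induced index flips: one must verify that the exponents on the two independently propagating paths ($\tilde j \to j$ and $i \to \tilde i$) combine additively in the final bound, that contributions from chains with multiple flips can be rearranged or reinterpreted via the symmetry of $F$ so as to land within either the single- or the double-sum expression, and that the branching of chains does not inflate constants in a way that interferes with the polynomial-in-$c$ slack implicit in $\lesssim_\varepsilon$. A secondary point is that the argument relies on \cref{ass:ildlt_entry_bound} to import localization from the exact to the incomplete factor, which is consistent with the conditional nature of the other a-priori bounds in \cref{sec:iselinv}.
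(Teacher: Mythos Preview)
Your recurrence $\Delta(i,j)=-F(i,j)-\Delta(i,\tilde r(j))\,\tilde L(\tilde r(j),j)$ is correct, and the idea of unrolling it is natural, but step~3 has a genuine gap. Sub-additivity of the level-of-fill only gives $\sum_m \mathrm{level}(b_m,b_{m-1}) \geq \mathrm{level}(\tilde\jmath,j)-(p-1)$ for a chain of depth $p$, so each chain's product of $|\tilde L|$-factors is bounded by $e^{g_\mathcal{E}(z)(p-1)}\exp\bigl(-g_\mathcal{E}(z)\,\mathrm{level}(\tilde\jmath,j)\bigr)$, not by $\exp\bigl(-g_\mathcal{E}(z)\,\mathrm{level}(\tilde\jmath,j)\bigr)$ uniformly in $p$. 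With a bounded branching factor $K$ per step, the total chain sum for a fixed endpoint picks up a factor $\sum_p (Ke^{g_\mathcal{E}(z)})^p$, which need not converge: neither $K\le 1$ nor $g_\mathcal{E}(z)$ small is available. The ``geometric series'' you invoke therefore does not close, and the polynomial slack in $\lesssim_\varepsilon$ cannot absorb an exponential-in-depth blowup.

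The paper avoids this problem by not unrolling at the level of $\tilde L$-entries at all. Instead it observes that, once the drops at column $j$ are fixed, the remaining iterations $j-1,\ldots,1$ implement an \emph{affine} map $\phi$ from the trailing block $B(\bar r,\bar r)$ to the full output, and it reads off a closed form for $\phi$ from the block-$LDL^T$ factorization \cref{eqn:block_ldlt}; see \cref{eqn:phi_formula}. The error then equals $\phi\bigl(F(\bar r,\bar r)\bigr)-\phi(0)$, whose blocks are products of $F$ with $\tilde A(\ell,\ell)^{-1}\tilde A(\ell,\bar r)$ and its transpose. The decay input is therefore the localization of $\tilde A(\ell,\ell)^{-1}$ from \cref{thm:ildlt_inverse_localization}, not of $\tilde L$, and no chain-counting is needed; linearity of $\phi$ then lets one superpose the contributions from all $j$. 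In effect, the paper sums your chains for you: the non-flipped chain sum from $j$ to $\tilde\jmath$ is an entry of $\tilde L^{-1}=\tilde D\tilde L^T\tilde A^{-1}$, and the flipped contributions are captured by the two-sided product $\tilde A(\ell,\ell)^{-1}\tilde A(\ell,\bar r)\,F\,\tilde A(\bar r,\ell)\tilde A(\ell,\ell)^{-1}$, which is exactly how the single- and double-sum terms in \cref{eqn:selinv_error} arise. If you want to rescue your route, the right move is to replace the $\tilde L$-localization argument by a direct bound on $\tilde L^{-1}$ (equivalently, on $\tilde A(\ell,\ell)^{-1}\tilde A(\ell,\bar r)$) via \cref{thm:ildlt_inverse_localization}; this bypasses the divergent chain sum and also dissolves the ``multiple-flip'' bookkeeping you flagged as the main obstacle.
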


\begin{proof}
    Let us first consider the application of \cref{alg:selinv} (exact selected inversion) to the matrix $\tilde A = A + E = \tilde L \tilde D \tilde L^T$.
    We note that the entries $\tilde A^{-1}(i',j')$ computed by this algorithm after iteration $j$ depend only {on} $\tilde L(\cdot, \ell)$, $\tilde D(\ell,\ell)$ {and} $\tilde A^{-1}(\bar r,\bar r)$,
    hence iterations $j' = j-1, \ldots, 1$ may be interpreted as a map $\phi : \big(\tilde L(\cdot,\ell), \tilde D(\ell,\ell), \tilde A^{-1}(\bar r, \bar r)\bigr) \mapsto \tilde A^{-1}$ which must be unique since the map from $\tilde A^{-1}$ to $\big(\tilde L(\cdot,\ell), \tilde D(\ell,\ell), \tilde A^{-1}(\bar r, \bar r)\bigr)$ is injective.
    This uniqueness allows us to determine $\phi$ by applying selected inversion to the block-$LDL^T$ factorization from \cref{eqn:block_ldlt}, which yields
    \begin{equation}\label{eqn:phi_formula}
        \tilde A^{-1}
        =
        \left(\begin{smallmatrix}
            \tilde A(\ell,\ell)^{-1} +
            \tilde A(\ell,\ell)^{-1} \tilde A(\ell,\bar r) \, \tilde A^{-1}(\bar r,\bar r) \, \tilde A(\bar r, \ell) \, \tilde A(\ell,\ell)^{-1}
            \quad&
            -\tilde A(\ell,\ell)^{-1} \tilde A(\ell, \bar r) \, \tilde A^{-1}(\bar r,\bar r)
            \\[0.5em]
            \phantom{\tilde A(\ell,\ell)^{-1} \tilde A(\ell,\ell)^{-1} \tilde A(\ell,\bar r) \, } -\tilde A^{-1}(\bar r,\bar r) \, \tilde A(\bar r, \ell) \, \tilde A(\ell,\ell)^{-1}
            \quad&
            \phantom{-\tilde A(\ell,\ell)^{-1} \tilde A(\ell,\bar r) \, } \tilde A^{-1}(\bar r,\bar r)
            \\
        \end{smallmatrix}\right)
        .
    \end{equation}
    Note that this is indeed a map in terms of $\tilde L(\cdot, \ell)$, $\tilde D(\ell,\ell)$ since all of the submatrices in \cref{eqn:phi_formula} other than $\tilde A(\bar r,\bar r)^{-1}$ can be computed from $\tilde L(\cdot, \ell)$, $\tilde D(\ell,\ell)$.

    Let us now assume for the moment that \cref{alg:iselinv} (incomplete selected inversion) only drops entries in $B\bigl(\bar r, j\bigr)$ and $B\bigl(j, \bar r\bigr)$ such that%
    \footnote{
    We would like to emphasize that this simple formula only holds for the first iteration $j$ where entries {are} dropped, since in later iterations $j' < j$ the error introduced by the dropped entries may propagate into other entries of $B$.
    }
    \[
    B(\bar r,\bar r) = \tilde A^{-1}(\bar r, \bar r) + F(\bar r, \bar r)
    .
    \]
    Since by assumption the incomplete inversion does not perform any additional mistakes after iteration $j$, {we have} that $B = \phi\bigl(B(\bar r, \bar r)\bigr)$ where for brevity we dropped the arguments of $\phi$ other than $\tilde A^{-1}(\bar r, \bar r)$, and since $\phi$ is affine in $\tilde A^{-1}(\bar r, \bar r)$ it further follows that
    \begin{equation}\label{eqn:iselinv_phi_linearity}
        \begin{array}{r @{{}={}} c @{{}+{}} l}
            B
            =
            \phi\bigl(B(\bar r, \bar r)\bigr)
            &
            \phi\bigl(\tilde A^{-1}(\bar r, \bar r)\bigr)
            &
            \phi\bigl(F(\bar r, \bar r)\bigr)
            -
            \phi(0)
            \\&
            \tilde A^{-1}
            &
            \phi\bigl(F(\bar r, \bar r)\bigr)
            -
            \phi(0)
            .
        \end{array}
    \end{equation}
    In the simplified case where errors occur only in $B\bigl(\bar r, j\bigr)$ and $B\bigl(j, \bar r\bigr)$, the claim then follows by estimating the entries of $\phi\bigl(F(\bar r, \bar r)\bigr) - \phi(0)$ using the localization of $\tilde A(\ell,\ell)^{-1}$ described in  \cref{thm:ildlt_inverse_localization}.
    The general estimate follows by applying \cref{eqn:iselinv_phi_linearity} repeatedly for each $j$.
\end{proof}

\begin{conjecture}\label{ass:iselinv_dropped_entries}
    In the notation of \cref{rem:iselinv_notation}, {we have} that
    \[
    |F(i,j)| \lesssim_\varepsilon \exp\bigl(-g_\mathcal{E}(z) \, d(i,j)\bigr)
    .
    \]
\end{conjecture}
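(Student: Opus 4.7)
The plan is to establish the claimed bound by induction over the iterations $j = n, n-1, \ldots, 1$ of \cref{alg:iselinv}, in tandem with the parallel inductive hypothesis that all previously-computed entries of $B$ satisfy $|B(i,k)| \lesssim_\varepsilon \exp\bigl(-g_\mathcal{E}(z) \, d(i,k)\bigr)$. The key observation underpinning this joint induction is that the formulae on \cref{alg:iselinv:lwtr,alg:iselinv:F1} are structurally identical up to the partition of the row index into $\tilde r$ and $r \setminus \tilde r$,
    \begin{equation*}
        B(\tilde r, j) = -B(\tilde r, \tilde r) \, \tilde L(\tilde r, j),
        \qquad
        F(r \setminus \tilde r, j) = B(r \setminus \tilde r, \tilde r) \, \tilde L(\tilde r, j);
    \end{equation*}
    hence a single localization bound on the generic expression $\sum_{k \in \tilde r} B(i,k) \, \tilde L(k,j)$ will deliver both the localization of the newly-computed $B$ entries (thereby closing the induction) and the desired bound on $F$. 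The remaining entries $F(j, r \setminus \tilde r)$ follow by symmetry, and the diagonal bound on $F(j,j)$ is trivial since $d(j,j) = 0$.

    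For the inductive step, I would first argue that the incomplete factor $\tilde L$ inherits the localization of the exact factor $L$, i.e.\ $|\tilde L(k,j)| \lesssim_\varepsilon \exp\bigl(-g_\mathcal{E}(z) \, \mathrm{level}(k,j)\bigr)$, via the same reasoning that supports \cref{ass:ildlt_entry_bound}. Using the elementary bound $\mathrm{level}(k,j) \geq d(k,j) - 1$, which follows from the fact that every fill path is a path in $G(A)$, this upgrades to $|\tilde L(k,j)| \lesssim_\varepsilon \exp\bigl(-g_\mathcal{E}(z) \, d(k,j)\bigr)$. Combined with the induction hypothesis on $B$, one obtains
    \begin{equation*}
        |F(i,j)|
        \leq
        \sum_{k} |B(i,k)| \, |\tilde L(k,j)|
        \lesssim_\varepsilon
        \sum_{k} \exp\bigl(-\tilde g \, (d(i,k) + d(k,j))\bigr)
    \end{equation*}
    for any $\tilde g < g_\mathcal{E}(z)$. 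The triangle inequality $d(i,k) + d(k,j) \geq d(i,j)$ then pulls out a factor $\exp\bigl(-\tilde g' \, d(i,j)\bigr)$ for any $\tilde g' < \tilde g$, and the residual sum $\sum_k \exp\bigl(-(\tilde g - \tilde g')(d(i,k) + d(k,j))\bigr)$ converges to a constant independent of $i, j$ and $n$, since the number of graph vertices within distance $r$ of a fixed vertex grows only polynomially in $r$.

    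The main obstacle I foresee is the same one that prevents a rigorous proof of \cref{ass:ildlt_entry_bound}: the prefactor hidden in the $\lesssim_\varepsilon$ notation on $B$ is reused when bounding $F$ and the newly-computed $B$ entries, which in turn feed back into later iterations. A naive accounting would produce a prefactor that grows geometrically in the number of iterations, and ruling out this accumulation will require exhibiting cancellation in the recursive structure of \cref{alg:iselinv}. This refined analysis is left for future work, and we content ourselves with the numerical evidence in \cref{sec:experiments} in support of the conjecture.
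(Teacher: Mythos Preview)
Your discussion is sound and lands on the same obstruction as the paper, but the route is genuinely different. The paper does not carry a joint induction on $B$ and $F$; instead, it extracts from the block-inverse formula \cref{eqn:phi_formula} in the proof of \cref{thm:iselinv_aposteriori} a self-contained recursion for $F$ alone,
\[
F(i,j)
=
\tilde A^{-1}(i,j)
-
F\bigl(i,r(i)\bigr) \, M^T\bigl(r(i),j\bigr)
+
M\bigl(i,r(j)\bigr) \, F\bigl(r(j),r(j)\bigr) \, M^T\bigl(r(j),j\bigr),
\]
with $M$ a localized kernel built from $A_\ell^{-1}$. This decouples $F$ from $B$ and makes the parallel with \cref{ass:ildlt_entry_bound} structurally exact: a leading term that is provably localized (here $\tilde A^{-1}(i,j)$, there $L(i,j)\,D(j,j)$) plus recursive corrections whose naive bounding produces a prefactor that compounds geometrically. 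Your approach is more elementary---it stays at the level of the algorithm's update rules and avoids the block-inverse machinery---but the price is that you must simultaneously maintain localization of $B$, and you need the auxiliary claim that $\tilde L$ inherits the decay of $L$ (which is itself on the same conjectural footing as \cref{ass:ildlt_entry_bound}). Both discussions correctly isolate the accumulation of prefactors through the recursion as the barrier to a rigorous proof.
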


\begin{proof}[Discussion]
    From the proof of \cref{thm:iselinv_aposteriori}, it follows that $F$ can be computed recursively according to
    \begin{equation}\label{eqn:F_recursion}
        F(i,j)
        =
        \tilde A^{-1}(i,j)
        -
        F\bigl(i,r(i)\bigr) \, M^T\bigl(r(i),j\bigr)
        +
        M\bigl(i,r(j)\bigr) \, F\bigl(r(j),r(j)\bigr) \, M^T\bigl(r(j),j\bigr)
    \end{equation}
    where
    \[
    M(i,\tilde{\imath})
    =
    \begin{cases}
        A_{\ell(\tilde{\imath})}^{-1}\bigl(i, \ell(\tilde{\imath})\bigr) \, A\bigr(\ell(\tilde{\imath}), \tilde{\imath}\bigr) &  i < \tilde{\imath} \\
        0 & \text{otherwise}
    \end{cases}
    \]
    and $A_\ell := A(\ell,\ell)$.
    Proving \cref{ass:iselinv_dropped_entries} thus faces the same obstacle as \cref{ass:ildlt_entry_bound}, namely that the errors committed at iteration $j$ depend on errors committed at previous iterations $j' > j$ such that any bound deteriorates exponentially in the recursion depth.
\end{proof}

\begin{corollary}\label{thm:iselinv_final_error}
    In the notation of \cref{rem:iselinv_notation}{,} and assuming \cref{ass:ildlt_entry_bound}, \cref{ass:level_vs_distance} {and} \cref{ass:iselinv_dropped_entries}, {we have} that
    \[
    \big\|(H+E-zI)^{-1} - B^{-1}\big\|_{\mathrm{nz}(H)}
    \lesssim_\varepsilon
    \exp\bigl(- 2 \, g_\mathcal{E}(z) \, c\bigr)
    .
    \]
\end{corollary}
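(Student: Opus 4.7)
I plan to mirror the proof of \cref{thm:ildlt_inverse_error_final}: start from the a-posteriori bound in \cref{thm:iselinv_aposteriori}, insert the conjectured entry-wise estimate $|F(\tilde\imath,\tilde\jmath)| \lesssim_\varepsilon \exp(-g_\mathcal{E}(z)\, d(\tilde\imath,\tilde\jmath))$ from \cref{ass:iselinv_dropped_entries}, and then show that the resulting sums decay at rate $2 g_\mathcal{E}(z)$ in $c$. After substitution, the first sum on the right-hand side of \cref{eqn:selinv_error} has exponent $\mathrm{level}(\tilde\jmath,j) + d(i,\tilde\jmath)$ and the second sum has exponent $\mathrm{level}(i,\tilde\imath) + \mathrm{level}(\tilde\jmath,j) + d(\tilde\imath,\tilde\jmath)$, both restricted to indices at which $F$ is nonzero.

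The next step is to convert every $\mathrm{level}$ factor to a graph-distance factor. Two structural facts suffice. First, $F(\tilde\imath,\tilde\jmath) \neq 0$ implies $(\tilde\imath,\tilde\jmath) \notin \mathrm{ifnz}(H)$, hence $\mathrm{level}(\tilde\imath,\tilde\jmath) > c$, and \cref{ass:level_vs_distance} then yields $d(\tilde\imath,\tilde\jmath) \gtrsim c$ on the summation support. Second, because every fill path is in particular a path, $\mathrm{level}(\alpha,\beta) \geq d(\alpha,\beta) - 1$ whenever the level is finite (the case $\mathrm{level} = \infty$ contributes zero to the exponential). Using $d(i,j) \leq 1$ for the target entry $(i,j) \in \mathrm{nz}(H)$ together with the triangle inequality, the first exponent satisfies
\[
\mathrm{level}(\tilde\jmath,j) + d(i,\tilde\jmath) \geq \bigl(d(i,\tilde\jmath) - 2\bigr) + d(i,\tilde\jmath) \gtrsim 2c,
\]
and the second satisfies
\[
\mathrm{level}(i,\tilde\imath) + \mathrm{level}(\tilde\jmath,j) + d(\tilde\imath,\tilde\jmath) \geq d(i,\tilde\imath) + d(\tilde\jmath,j) + d(\tilde\imath,\tilde\jmath) - 2 \gtrsim 2c,
\]
where the last inequality uses $d(i,\tilde\imath) + d(\tilde\jmath,j) \geq d(\tilde\imath,\tilde\jmath) - 1 \gtrsim c$.

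Having reduced each individual exponent to $\gtrsim 2c$, I would close the argument by estimating the remaining geometric sums exactly as in the concluding step of the proof of \cref{thm:ildlt_inverse_error_final}: parametrize by distance shells, split the double sum into the regions $d(i,\tilde\imath) \lesssim c/2$ and $d(i,\tilde\imath) \gtrsim c/2$ (and analogously for $d(\tilde\jmath,j)$), then use either the polynomial vertex count of small shells or the convergence of $\sum_k k^{d-1} \exp(-g_\mathcal{E}(z) k)$ for the large shells. Polynomial-in-$c$ prefactors are absorbed into the $\lesssim_\varepsilon$ slack of \cref{def:asymptotic_rate_of_decay}. The main obstacle, compared to the $E$-based argument, is the asymmetric structure of the first sum in \cref{eqn:selinv_error}: only $\mathrm{level}(\tilde\jmath,j)$ appears explicitly there, so the second factor $\exp(-g_\mathcal{E}(z) c)$ has to be extracted from the localization of $F$ itself rather than from a matching $\mathrm{level}(i,\tilde\imath)$ term as in \cref{thm:ildlt_inverse_error_final}. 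This requires the slightly awkward bookkeeping described above but introduces no genuinely new idea beyond what is already used in the factorization analysis.
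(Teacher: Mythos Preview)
Your proposal is correct and is precisely the ``analogous to \cref{thm:ildlt_inverse_error_final}'' argument that the paper gestures at without spelling out. You correctly identify the only structural difference, namely that the single-index sum in \cref{eqn:selinv_error} carries just one $\mathrm{level}$ factor, and you recover the missing $c$ from the localization of $F$ via $\mathrm{level}(\tilde\jmath,j) \geq d(\tilde\jmath,j)-1 \geq d(i,\tilde\jmath)-2$ combined with $d(i,\tilde\jmath) \gtrsim c$ on $\mathrm{nz}(F)$; the remaining shell-summation step is identical to the one in \cref{thm:ildlt_inverse_error_final}.
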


\begin{proof}
    Analogous to \cref{thm:ildlt_inverse_error_final}.
\end{proof}

\section{Numerical experiments}\label{sec:experiments}

This section illustrates the theory presented in this paper at the example of a toy Hamiltonian $H \in \mathbb{R}^{n \times n}$ with entries $H(i,j)$ given by
\begin{equation}\label{eqn:toy_hamiltonian}
H(i,j)
:=
\begin{cases}
    (-1)^{d(i,1)} & \text{if } i = j, \\
    -\tfrac{1}{2 d} & \text{if } i \sim j, \\
    0    & \text{otherwise}, \\
\end{cases}
\end{equation}
where $d \in \{1,2,3\}$ denotes the dimension and $i \sim j$ if $i$ and $j$ are nearest neighbors in a $d$-dimensional Cartesian mesh with periodic boundary conditions.
We note that the off-diagonal entries $H(i,j) = -\frac{1}{2 d}$ correspond to a shifted and scaled finite-difference discretization of the $d$-dimensional Laplace operator, and the diagonal entries $H(i,i) = (-1)^{d(i,1)}$ take the form of a chequerboard pattern where each vertex has the opposite sign compared to its neighbors.
In two and three dimensions, we use the nested dissection order illustrated in \cref{fig:nested_dissection} to improve the sparsity of the $LDL^T$ factorization, while in one dimension we use a simple left-to-right order as shown in \cref{fig:fill_in}.
All numerical experiments have been performed on a single core of an Intel Core i7-8550 CPU (1.8 GHz base frequency, 4 GHz turbo boost) using the Julia programming language \cite{BEKS17}.
A Julia package developed as part of this work is available at \url{https://github.com/ettersi/IncompleteSelectedInversion.jl}.

\begin{figure}
    \centering
    \subfloat[Factorization]{\includegraphics{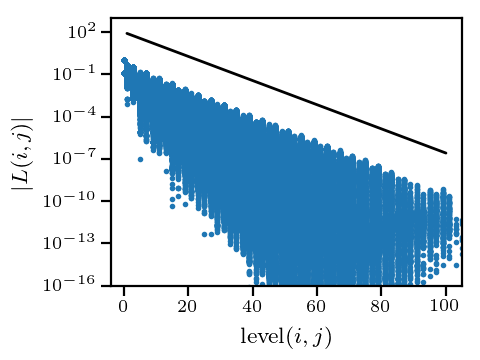}}
    \subfloat[Inverse]{\includegraphics{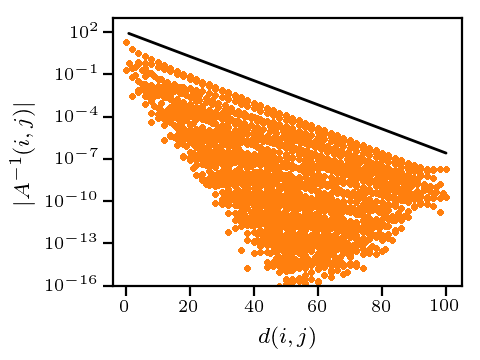}}
    \caption{
    Localization of the $L$-factor (a) and inverse (b) of the matrix $H-0.98I$ with $H$ constructed according to \cref{eqn:toy_hamiltonian} with $d = 2$ and $\sqrt{n} = 100$.
    The black lines indicate the rate of decay $g_\mathcal{E}(z)$ predicted by \cref{thm:localization_inv,thm:localization_ldlt}.
    }
    \label{fig:localization_rates}
\end{figure}

\begin{example}[localization]\label{ex:localization_numerics}
The chequerboard pattern along the diagonal causes the spectrum of $H$ to cluster in the two intervals $\mathcal{E} := [-\sqrt{2},-1] \cup [1, \sqrt{2}]$ such that one may think of $H$ as the Hamiltonian matrix of an insulator with band gap $(-1,1)$.
\Cref{fig:localization_rates} compares the entries of the $LDL^T$ factorization and inverse of $H-zI$ against the predictions of \cref{thm:localization_inv,thm:localization_ldlt}, and we observe that the theory is matched perfectly.
In particular, the entries of the $L$-factor decay with the same rate $g_\mathcal{E}(z)$ as the inverse, which indicates that the spectr{a} of all leading submatrices of $H$ are indeed contained in $\mathcal{E}$ as conjectured in \cref{rem:submatrix_spectra}.

The excellent agreement between the theoretical and empirical convergence rates is a consequence of the simple sparsity pattern in \cref{eqn:toy_hamiltonian}, and the agreement may be worse for a more realistic Hamiltonian.
\end{example}

\begin{example}[scaling and convergence]\label{ex:iselinv_convergence}
\Cref{fig:scaling} numerically confirms the $\mathcal{O}(nc)$ ($d = 2$) and $\mathcal{O}(nc^3)$ ($d = 3$) scaling of incomplete selected inversion predicted by \cref{thm:iselinv_cost}.
Furthermore, \cref{fig:convergence} demonstrates that incomplete selected inversion indeed converges exponentially in the cut-off level-of-fill $c$ with a rate of convergence equal to twice the localization rate $g_\mathcal{E}(z)$ as predicted in \cref{thm:incompleteness_error}.
\end{example}

\begin{figure}
    \centering
    \subfloat[Scaling in $n$ for $c = 4$]{\label{fig:nscaling}\includegraphics{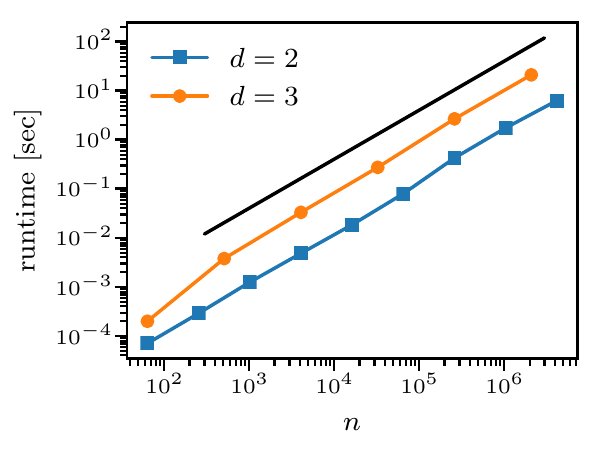}}
    \subfloat[Scaling in $c$ for $n^{1/d} = 32$]{\label{fig:cscaling}\includegraphics{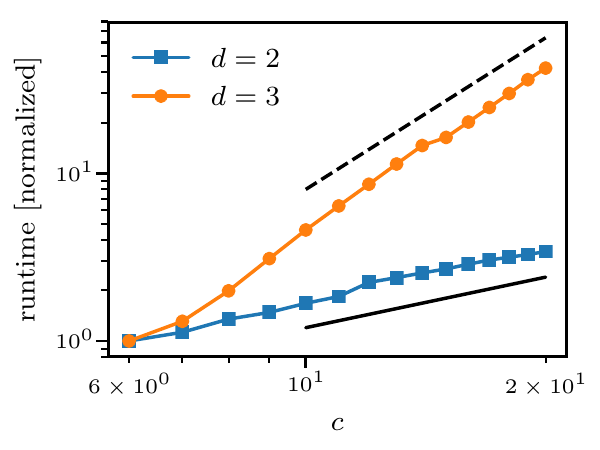}}
    \caption{
    Scaling of the incomplete selected inversion algorithm with respect to the matrix size $n$ (left) and cut-off level-of-fill $c$ (right).
    The black lines indicate $\mathcal{O}(n)$ scaling (Figure (a)), $\mathcal{O}(c)$ scaling (Figure (b), solid) and $\mathcal{O}(c^3)$ scaling (Figure (b), dashed) as predicted by \cref{thm:iselinv_cost}.
    The reported runtimes are the minimum out of three runs of selected inversion applied to the matrix $H$ from \cref{eqn:toy_hamiltonian}.
    In Figure (b), the runtimes have been scaled by $1/\text{runtime}(c = 6)$ to ensure comparable y-values for the two plots.
    }
    \label{fig:scaling}
\end{figure}

\begin{figure}
    \centering
    \subfloat[$d = 2, z = 0.98$]{\label{fig:convergence_2d}\includegraphics{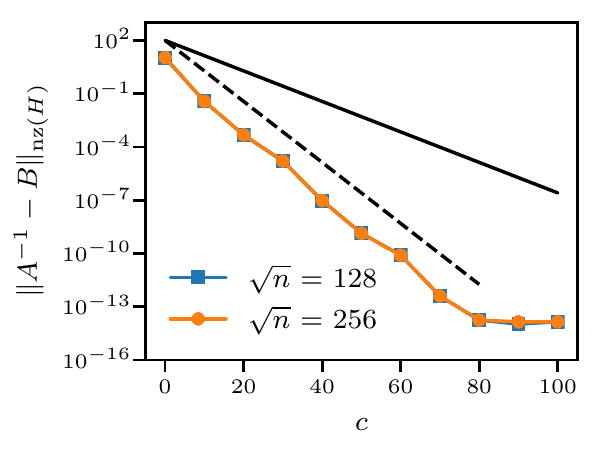}}
    \subfloat[$d = 3, z = 0$]{\label{fig:convergence_3d}\includegraphics{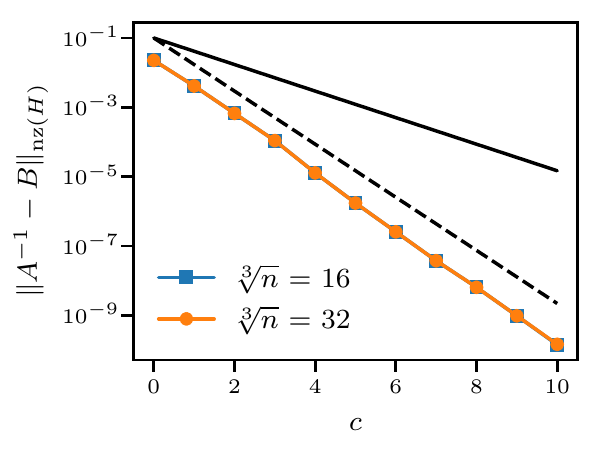}}
    \caption{
    Error vs.\ cut-off level-of-fill $c$ of incomplete factorization and selected inversion algorithm applied to $A = H-zI$ with $H$ as in \cref{eqn:toy_hamiltonian}.
    The solid black lines indicate exponential decay with rate $g_\mathcal{E}(z)$, and the dashed lines indicate twice this rate.
    }
    \label{fig:convergence}
\end{figure}

\begin{figure}
    \centering
    \subfloat[Localization]{\label{fig:periodic_1D_localization}\includegraphics{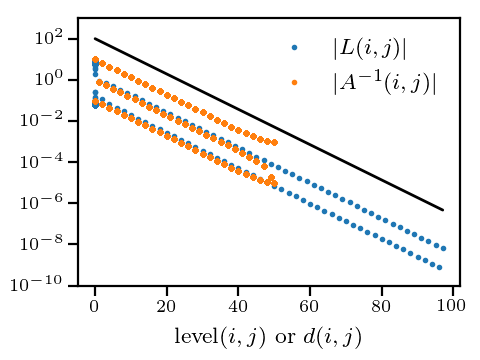}}
    \subfloat[Error]{\label{fig:periodic_1D_error}\includegraphics{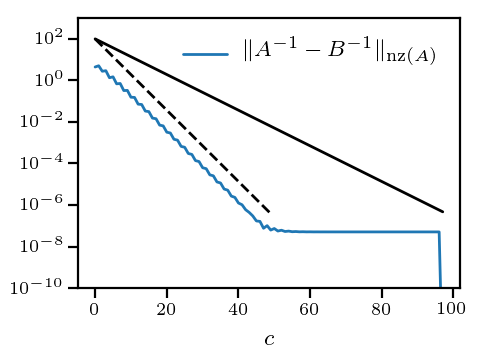}}
    \caption{
    Localization (left) and convergence of incomplete factorization and selected inversion (right) of the matrix $H-zI$ with $H$ as in \cref{eqn:toy_hamiltonian}, $d = 1$ $n = 100$ and $z = 0.98$.
    The solid black lines indicate exponential decay with rate $g_\mathcal{E}(z)$, and the dashed line indicates twice this rate.
    }
    \label{fig:periodic_1D}
\end{figure}

\begin{example}\label{ex:level_vs_distance}
Finally, we continue the discussion of \cref{ass:level_vs_distance} ($d(i,j) \gtrsim \mathrm{level}(i,j)$) which was used to derive \cref{thm:incompleteness_error} but which we have seen to be violated in the case of one-dimensional periodic chains.
\Cref{fig:periodic_1D} shows that even in this case, the incomplete algorithm converges with rate $2 g_\mathcal{E}(z)$ as predicted by \cref{thm:incompleteness_error}, but the convergence breaks down at a cut-off level-of-fill $c$ of about $\frac{n}{2}$ after which the error stagnates.

In the framework of \cref{sec:iselinv}, this observation may be explained as follows.
Due to the simple graph structure of $H$, the matrix of dropped entries $E$ from \cref{alg:ildlt} contains precisely two nonzero entries at locations $i = n$, $j = c+2$ and the transpose thereof,
and by \cref{ass:ildlt_entry_bound} these entries satisfy
\[
|E(n,c+2)|
\lesssim_\varepsilon
\exp\bigl(-g_\mathcal{E}(z) \, \mathrm{level}(n,c+2)\bigr)
=
\exp\bigl(-g_\mathcal{E}(z) \, (c+1)\bigr)
.
\]
(In this case, \cref{ass:ildlt_entry_bound} can easily be proven since the incomplete factorization algorithm only drops a single entry.)
According to \cref{thm:ildlt_inverse_error}, the error due to the incomplete factorization is thus upper-bounded by
\begin{align}
    \|A^{-1} - \tilde A^{-1}\|_{\mathrm{nz}(A)}
    &\lesssim_\varepsilon
    \max_{(i,j) \in \mathrm{nz}(A)}
    \exp\Bigl(-g_\mathcal{E}(z) \, \big( d(i,n) + c+1 + d(c+2,j) \big) \Bigr)
    \\&\lesssim_\varepsilon
    \exp\Bigl(-g_\mathcal{E}(z) \, \big( c+1 + \min \{ d(c+2,1), d(c+2,n-1)\} \big) \Bigr)
    \\&=
    \begin{cases}
        \exp\bigl(-2\,g_\mathcal{E}(z) \, (c+1)\bigr) & \text{if } c \leq \tfrac{n-4}{2}, \\
        \exp\bigl(-g_\mathcal{E}(z) \, (n-2)\bigr) & \text{otherwise}, \\
    \end{cases}
    \label{eqn:onedimensional_error_analysis}
\end{align}
and a similar bound can be derived for the error $\|\tilde A^{-1} - B\|_{\mathrm{nz}(A)}$ introduced by the incomplete selected inversion step.
We note that \cref{eqn:onedimensional_error_analysis} describes precisely the behaviour observed in \cref{fig:periodic_1D_error}.

As discussed after \cref{ass:level_vs_distance}, we expect that $d(i,j) \gtrsim \mathrm{level}(i,j)$ is rarely violated in dimensions $d > 2$ and if a nested dissection vertex order is used.
This example further demonstrates that even if \cref{ass:level_vs_distance} is violated, the incomplete selected inversion algorithm still converges at the rate $2 g_\mathcal{E}(z)$ until the cut-off level-of-fill $c$ becomes $\mathcal{O}(n)$, at which point the speedup of the incomplete selected inversion compared to the exact algorithm vanishes anyway.
\end{example}

\section{Conclusion}\label{sec:conclusion}

We have shown that the $LDL^T$ factorization of a sparse and well-conditioned\footnote{
We call a matrix $A = H -zI$ well-conditioned if $z \notin \mathcal{E}$ with $\mathcal{E}$ the ``smoothed'' spectrum of $H$ described in \cref{rem:loc_notation}.
} matrix $A$ exhibits a localization property similar to that of $A^{-1}$, and we have developed algorithms which exploit this property to compute selected entries of $A^{-1}$ in $\mathcal{O}(n)$ runtime and memory.
This opens up a new class of linear-scaling electronic structure algorithms which we expect to be highly competitive compared to existing algorithms due to reasons which we shall explain next.

Most linear-scaling electronic structure codes in use today proceed according to the following outline \cite{Goe99,BM12}.
\begin{itemize}
    \item Express the density matrix $f_{\beta,E_F}(H)$ as the minimizer of some functional $\mathcal{F} : \mathbb{R}^{n \times n} \to \mathbb{R}$,
    \[
    f_{\beta,E_F}(H)
    =
    \argmin_{\Gamma \in \mathbb{R}^{n \times n}} \mathcal{F}(\Gamma)
    .
    \]
    \item Compute an approximation $\tilde \Gamma \approx f_{\beta,E_F}(H)$ with $\mathcal{O}(n)$ effort by minimizing $\mathcal{F}(\tilde \Gamma)$ over the space of matrices with a certain prescribed sparsity pattern.
\end{itemize}
For simplicity of our argument, we will assume that the functional $\mathcal{F}(\Gamma)$ is given by the McWeeny purification function \cite{McW60}
\[
\mathcal{F}(\Gamma) := \trace\Bigl(\big(3\Gamma^2 - 2\Gamma^3\big) \, (H - \mu I)\Bigr)
\]
and the minimization is performed using the conjugate gradients algorithm, but we expect that similar observations also hold for more sophisticated linear-scaling schemes.
The bulk of the compute time is then spent on evaluating products of sparse matrices, each of which requires $\mathcal{O}\bigl(n c^{2d}\bigr)$ floating-point operations\footnote{
The number of floating-point operations required by a sparse matrix product $C = AB$ can be estimated as follows: each of the $n$ rows of $C$ has $\mathcal{O}(c^d)$ nonzero entries, each of these entries is computed by taking the inner product of a row of $A$ and a column of $B$, and both of these vectors have $\mathcal{O}(c^d)$ nonzero entries.
} assuming an effective dimension $d$ and that both matrix arguments have localization length $c$.
Furthermore, the number of conjugate gradient steps and hence the number of matrix products required to reach a fixed accuracy scales with the inverse square root of the band gap in $H$ \cite[\S III.D]{Goe99}.
In contrast, the number of selected inversions required by the PEXSI scheme scales only logarithmically with the band gap and temperature \cite{LLYE09,Mou16,Ett19}, and each selected inversion with a localization length $c$ requires $\mathcal{O}\bigl(nc\bigr)$ operations for two-dimensional problems and $\mathcal{O}\bigl(nc^3\bigr)$ operations for three-dimensional problems; see \cref{thm:iselinv_cost}.
The incomplete PEXSI scheme thus makes order of magnitudes fewer calls to the low-level routine (selected inversion / matrix product) than the optimization scheme, and each such call runs faster for large enough $c$.
Combining these two factors leads us to expect that our iPEXSI scheme will outperform existing linear-scaling algorithms at least in the regime of large enough localization lengths $c$.

As a case in point, we report that the selected inversion algorithm applied to the matrix \cref{eqn:toy_hamiltonian} with $d = 2$, $\sqrt{n} = 256$ and $c = 20$ takes 0.4 seconds, while evaluating the $20$th power of the same matrix $H$ takes 7.5 seconds (see \cref{sec:experiments} for details regarding hardware and software).
Evaluating a power of $H$ with similar localization properties as those assumed in the selected inversion is roughly 20 times slower for these particular parameters, and this ratio will tip even further in favor of the selected inversion algorithm as we increase the localization length $c$.

What is needed next in order to realize the promised advantage of the iPEXSI algorithm is a massively parallel high-performance implementation of the incomplete selected inversion algorithm comparable to that presented for the exact algorithm in \cite{JLY16}.
Developing such a code will be the topic of future work, but we would like to point out that the parallelization strategies from \cite{JLY16} also apply to the incomplete factorization and selected inversion algorithms and hence we expect similar parallel scaling.
Closely related work regarding the parallel implementation of the incomplete LU factorization with arbitrary level-of-fills (as opposed to the more wide-spread ILU(0) and ILU(1) factorizations) can be found in \cite{KK97,HP01,SZW03,DC11}.
Furthermore, an alternative ILU algorithm based on iterative refinement of a trial factorization and designed specifically to improve parallel scaling has recently been proposed in \cite{CP15}.
While this algorithm was found to be highly effective at finding factorizations suitable for preconditioning, it is unclear whether it is applicable in the context of the selected inversion algorithm where the accuracy requirements are much more stringent. Additionally, the algorithm from \cite{CP15} will only lead to an asymptotic speedup for the selected inversion algorithm if a similarly parallelizable algorithm for the selected inversion step can be developed.
It is not obvious whether such an algorithm exists, since the algorithm from \cite{CP15} is based on \cref{thm:ildlt_error} which has no analogue for the selected inversion step.

\section*{Acknowledgements}
The author would like to thank Christoph Ortner (thesis supervisor), Nick Trefethen (thesis referee) and Andreas Dedner (thesis referee) for valuable input.

\bibliographystyle{siamalpha}
\bibliography{/home/ettersi/Documents/library/library.bib}

\end{document}